\newenvironment{red}
{\relax\color{red}}
{\hspace*{.5ex}\relax}
\newcommand{\ber}{\begin{red}}
\newcommand{\er}{\end{red}}
\theoremstyle{definition}
\newtheorem*{maintheorem}{Theorem}
\newtheorem*{theo}{Theorem}
\newtheorem{theorem}{Theorem}[section]
\newtheorem{corollary}[theorem]{Corollary}
\newtheorem{lemma}[theorem]{Lemma}
\newtheorem{proposition}[theorem]{Proposition}
\newtheorem{definition}[theorem]{\rm Definition}
\newtheorem{remark}[theorem]{\rm Remark}
\def \para{\refstepcounter{theorem} \par\medskip\noindent
                \textbf{\thetheorem .} }
\theoremstyle{remark}
\numberwithin{equation}{section}
\renewcommand{\mod}{\operatorname{mod}}
\newcommand{\End}{\operatorname{End}}
\newcommand{\Hom}{\operatorname{Hom}}
\newcommand{\K}{\mathcal{K}}
\newcommand{\latt}{\mathsf{latt}}
\renewcommand{\O}{\mathcal{O}}
\renewcommand{\k}{\mathbf{k}}
\newcommand{\Z}{{\mathbb{Z}}}
 \def\hsymb#1{\mbox{\strut\rlap{\smash{\Huge$#1$}}\quad}}
\begin{document}

\title[components of stable AR quivers]{On the non-periodic stable Auslander--Reiten  Heller component for the Kronecker algebra over a complete discrete valuation ring}
\date{}

\author[Kengo Miyamoto]{Kengo Miyamoto}
\address{Department of Pure and Applied Mathematics, Graduate School of Information
Science and Technology, Osaka University, Suita, Osaka 565-0871, Japan}
\email{k-miyamoto@ist.osaka-u.ac.jp}

%\urladdr{}
\keywords{}
%\thanks{2010 {\em Mathematics Subject Classification.} Primary: Secondary }
%\subjclass[2010]{}

\begin{abstract}
We consider the Kronecker algebra $A=\mathcal{O}[X,Y]/(X^2,Y^2)$, where $\O$ is a complete discrete valuation ring. Since $A\otimes\kappa$  is a special biserial algebra, where $\kappa$ is the residue field of $\mathcal{O}$, one can compute a complete list of indecomposable $A\otimes \kappa$-modules. For each indecomposable $A\otimes \kappa$-module, we obtain a special kind of $A$-lattices called ``Heller lattices''.  In this paper, we determine the non-periodic component of a variant of the stable Auslander--Reiten quiver for the category of $A$-lattices that contains ``Heller lattices''.  
\end{abstract}

%\urladdr{}
\keywords{Auslander--Reiten quiver, Heller lattice, tree class}
%\thanks{2010 {\em Mathematics Subject Classification.} Primary ; Secondary }
\subjclass[2010]{16G70 ; 16G30}

\maketitle
\tableofcontents

%%%%%%%%%%%%%%%%%%%%%%%%%%
\section*{Introduction} %%
%%%%%%%%%%%%%%%%%%%%%%%%%%
Auslander--Reiten theory has become an indispensable tool since we may prove many important combinatorial and homological properties with the help of the theory, and it gives us invariants of various additive categories arising in representation theory, for example see \cite{ARS}, \cite{ASS}, \cite{H} and \cite{Y}. A combinatorial skeleton of the additive category of indecomposable objects is the Auslander--Reiten quiver, which encapsulates much information on indecomposable objects and irreducible morphisms. 
Therefore, to determine the shape of Auslander--Reiten quivers is one of classical problems in representation theory of algebras. 

There exist strong restrictions on stable Auslander--Reiten quivers for important classes of finite dimensional algebras. 
In \cite{We}, Webb studied the stable Auslander--Reiten components of group algebras. Let $G$ be a finite group and $\k$ an algebraically closed field with characteristic $p$ such that $p$ divides the order of $G$. Then, the tree class of any stable component of the group algebra $\k G$ is one of infinite Dynkin diagrams $A_{\infty}, B_{\infty}, C_{\infty}, D_{\infty}$ or $A_{\infty}^{\infty}$, or else it is $A_n$, or one of Euclidean diagrams.
Moreover, Erdmann showed that the tree class of any stable component of a wild block of $\k G$ is $A_{\infty}$ \cite{Erd}.
For another example, Riedtmann and Todorov showed that the tree class of any stable component of a finite dimensional self-injective algebra of finite representation type is one of finite Dynkin diagrams \cite{Ri2, T}. 
However, if the base ring is not a field but a regular local ring, then the shape of  (stable) Auslander--Reiten components for algebras are mostly unknown.

We use the following notation, see \cite{I} for details.
Let $\mathcal{O}$ be a complete discrete valuation ring, $\kappa$ its residue field, $\mathcal{K}$ its fraction field. An $\mathcal{O}$-algebra $A$ is called an \textit{$\mathcal{O}$-order} if $A$ is finitely generated projective as an $\mathcal{O}$-module. An $\mathcal{O}$-order $A$ is \textit{symmetric} if $\mathrm{Hom}_\mathcal{O}(A,\mathcal{O})$ is isomorphic to $A$ as $(A,A)$-bimodules. A finitely generated right $A$-module $M$ is called an $A$-\textit{lattice} if it is finitely generated projective as an $\mathcal{O}$-module\footnote[1]{In this paper, we consider $A=\mathcal{O}[X,Y]/(X^2,Y^2)$, which is a finitely generated Cohen--Macaulay $\mathcal{O}$-algebra with $\mathrm{Kr\text{-}dim}(A)=\mathrm{Kr\text{-}dim}(\mathcal{O})$. Thus, it follows from \cite[(1.8)]{Y} that a finitely generated $A$-module $M$ is a Cohen--Macaulay $A$-module if and only if it is a Cohen--Macaulay $\mathcal{O}$-module. Since $\mathcal{O}$ is regular,   ``$A$-lattices'' coincide with ``maximal Cohen--Macaulay $A$-modules'', see \cite[(1.5.1)]{Y}.}. 
We denote by $\mathsf{mod}$-$A$ the category consisting of finitely generated right $A$-modules and by $\mathsf{latt}$-$A$ the full subcategory of $\mathsf{mod}$-$A$ consisting of $A$-lattices. 

Let $A$ be a symmetric $\mathcal{O}$-order and $M$ a non-projective indecomposable $A$-lattice. Almost split sequences for $\mathsf{latt}$-$A$ had been studied by Auslander and Reiten. According to \cite{AR1},  there exists an almost split sequence ending at $M$ if and only if $M$ satisfies the following condition $(\natural)$:
$$
\text{$M\otimes_\mathcal{O}\mathcal{K}$ is projective as an $A\otimes_\mathcal{O}\mathcal{K}$-module. } \eqno(\natural)
$$
An almost split sequence ending at $M$ is unique up to isomorphism of short exact sequences if it exists. Therefore, we adopt the definition of the stable Auslander--Reiten quiver for $\mathsf{latt}$-$A$ as a valued quiver whose vertices are the isoclasses of non-projective indecomposable $A$-lattices satisfying $(\natural)$ in which there are valued arrows whenever there exists an irreducible morphism (Definition 1.11).
Unfortunately, it is too difficult to determine the stable Auslander--Reiten quiver for $\latt$-$A$ completely. Hence, we focus on a special kind of $A$-lattices called \textit{Heller lattices}, which are $A$-lattices defined as the direct summands of the first syzygies of indecomposable $A\otimes_{\mathcal{O}}\kappa$-modules viewed as $A$-modules. Note that Heller lattices satisfy the condition $(\natural)$. 
In this paper, we call a stable component containing indecomposable Heller lattices a \textit{Heller component} of $A$, and we denote by $\mathcal{CH}_{A}$ the union of Heller components of $A$. 
Some known results for determining $\mathcal{CH}_{A}$ are found in \cite{K2} and \cite{AKM}. In \cite{K2}, Kawata considered group algebras over $\mathcal{O}$ of characteristic zero with some assumption on ramification, and Ariki, Kase and the author considered truncated polynomial rings over $\mathcal{O}$ \cite{AKM}. 
By the definition of the Heller lattice, if a complete list of isoclasses of indecomposable modules over $A\otimes_\mathcal{O}\kappa$ is given, then we can determine $\mathcal{CH}_{A}$.  
Since non-projective-injective  indecomposable modules over a Brauer graph algebra (aka a symmetric special biserial algebra \cite{S}) are classified by using string paths and band paths  (see \cite{WW}, \cite{BR} or Subsection 1.3.),  it is natural to consider the case when $A\otimes_{\mathcal{O}}\kappa$ is a Brauer graph algebra. 

In this paper, we determine non-periodic components contained in $\mathcal{CH}_{A}$ of $A=\mathcal{O}[X,Y]/(X^2,Y^2)$. Note that $A\otimes_\mathcal{O}\kappa$ is a Brauer graph algebra associated with one loop and one vertex with multiplicity $1$. The main result is the following:

\begin{maintheorem} Let $\mathcal{O}$ be a complete discrete valuation ring, and $A=\mathcal{O}[X,Y]/(X^2,Y^2)$. Assume that the residue field of $\mathcal{O}$ is algebraically closed. For a string path $w$, let $M_w$ be the indecomposable $A\otimes_\mathcal{O}\kappa$-module given by $w$ and $Z_{M_w}$ the first syzygy of $M_w$ in $\latt$-$A$. Then, the following statements hold.
\begin{enumerate}
\item  If $w$ has even length, then $Z_{M_w}$ is indecomposable.
\item The Heller component $\mathcal{CH}_{A}$ contains a unique non-periodic component $\mathcal{CH}_\text{np}$.
\item  An indecomposable Heller lattice $Z$ lies on $\mathcal{CH}_\text{np}$ if and only if $Z=Z_{M_w}$ for some $w$ with even length.
\item $Z_{M_w}$ appears on the boundary of the component $\mathcal{CH}_\text{np}$.
\item The component $\mathcal{CH}_\text{np}$ is isomorphic to $\mathbb{Z}A_{\infty}$. 
\end{enumerate}
\end{maintheorem}

We note that the ``Kronecker algebra" over a ring $R$ usually means the generalized triangular matrix $R$-algebra 
$$
\left(\begin{array}{cc}
R& 0 \\
R^2 & R\end{array}\right).
$$
However, in this paper, we call the $R$-algebra $R[X,Y]/(X^2,Y^2)$ the ``Kronecker algebra'' following Erdmann, see \cite[Chapter I, Example 4.3]{Erd}. These two algebras are not isomorphic each other, but there is a functorial relation, which is explained in \cite[Section 5]{Gab}, \cite[X.2]{ARS} and \cite[Chapter XIX, 1.13 Remark]{SS1}.

This paper consists of five sections. In Section 1, we define almost split sequences and the stable Auslander--Reiten quiver for $\mathsf{latt}$-$A$, and recall some results from \cite{A1}, \cite{AKM}, \cite{Ri} and \cite{Z}.
In Section 2, we give a complete list of Heller lattices of $A=\mathcal{O}[X,Y]/(X^2, Y^2)$, and explain their properties including the indecomposability, the periodicity/aperiodicity and the appearance of non-periodic Heller lattices on the boundary of $\mathcal{CH}_\text{np}$. 
Moreover, we show that if the tree class of $\mathcal{CH}_\text{np}$ is not $A_{\infty}$, then the possibilities of the tree class are  $\widetilde{E}_{6}$, $\widetilde{E}_{7}$, $\widetilde{E}_{8}$, $\widetilde{F}_{41}$ or $\widetilde{F}_{42}$. 
In Section 3, we define an additive function on $\mathcal{CH}_\text{np}$ and we show that the tree class of $\mathcal{CH}_\text{np}$ is neither $\widetilde{F}_{41}$ nor $\widetilde{F}_{42}$. 
In Section 4, we prove the main result by computing the ranks of vertices of the component in $\Z\widetilde{E}_{6}$, $\Z\widetilde{E}_{7}$ or $\Z\widetilde{E}_{8}$ to exclude the cases. 
In the last section, we improve \cite[Theorem 1.27]{AKM}  as follows.

\begin{theo}
Let $A$ be a symmetric $\O$-order, where $\mathcal{O}$ is a complete discrete valuation ring, and let $\mathcal{C}$ be a component of the stable Auslander--Reiten quiver for $\latt$-$A$. Assume that $\mathcal{C}$ has infinitely many vertices. Then, the following statements hold.
\begin{enumerate}[(1)]
\item Suppose that $\mathcal{C}$ is $\tau$-periodic. Then, one of the following statements holds:
\begin{enumerate}[(i)]
\item If $\mathcal{C}$ has no loops, then $\mathcal{C}$ is of the form $\mathbb{Z}T/G$, where $T$ is a directed tree whose underlying graph is one of infinite Dynkin diagrams.
\item If $\mathcal{C}$ has loops, then $\mathcal{C}\setminus\{\text{loops}\}=\mathbb{Z}A_{\infty}/\langle \tau \rangle$. Moreover, the loops appear on the boundary of $\mathcal{C}$.
\end{enumerate}
\item Suppose that $\mathcal{C}$ is $\tau$-non-periodic. Then, $\mathcal{C}$ has no loops.  Moreover, if either
\begin{enumerate}[(i)]
\item $\mathcal{C}$ does not contain Heller lattices or
\item $A\otimes_{\O}\kappa$ has finite representation type, 
\end{enumerate}
then the tree class of $\mathcal{C}$ is one of infinite Dynkin diagrams or Euclidean diagrams.
\end{enumerate}
%\item If $\mathcal{C}$ has a loop, the $\mathcal{C}$ is of the form 
%$$\begin{xy}
%(0,0)*[o]+{\bullet}="01",%(20,0)*[o]+{\circ}="02",(40,0)*[o]+{\circ}="03",(45,0)*[o]+{\cdots}="d1",(51,0)*[o]+{\cdots}="d2" ,(55,0)*[o]+{\circ}="04", (75,0)*[o]+{\circ}="05",(95,0)*[o]+{\bullet}="06",
%\ar "01";"02"
%\ar "02";"03"
%\ar "04";"05"
%\ar "05";"06"
%\SelectTips{eu}{}
%\ar @(ur,ul)"01";"01"
%\ar @(ur,ul)"02";"02"
%\ar @(ur,ul)"03";"03"
%\ar @(ur,ul)"04";"04"
%\ar @(ur,ul)"05";"05"
%\ar @(ur,ul)"06";"06"
%\ar @{..}@(dr,dl)"01";"01"^{\tau}
%\ar @{..}@(dr,dl)"02";"02"^{\tau}
%\ar @{..}@(dr,dl)"04";"04"^{\tau}
%\ar @{..}@(dr,dl)"05";"05"^{\tau}
%\ar @{..}@(dr,dl)"06";"06"^{\tau}
%\end{xy}$$
\end{theo}

Note that there may exist loops in Auslander--Reiten quivers by \cite{W}. 

\section*{Acknowledgment}
My heartfelt appreciation goes to Professor Susumu Ariki (Osaka University) who provided helpful comments and suggestions. I would also like to thank Professor Shigeto Kawata (Nagoya City University), Professor Michihisa Wakui (Kansai University), Professor Ryoichi Kase (Okayama University of Science), Professor Joseph Chuang (City University of London), Professor Steffen Koenig (Universit\"at Stuttgart) and Professor Liron Speyer (University of Virginia) whose meticulous comments were an enormous help to me.

%%%%%%%%%%%%%%%%%%%%%%%%%%
\section{Preliminaries} %%
%%%%%%%%%%%%%%%%%%%%%%%%%%

Throughout this paper, we use the following conventions.
\begin{enumerate}[(a)]
\item $\mathcal{O}$ denotes a complete discrete valuation ring, $\kappa$ is the residue field and $\mathcal{K}$ is the quotient field. We assume that the residue field $\kappa$ is algebraically closed.
\item ``Modules" mean right modules.
\item Given an $\mathcal{O}$-order $A$, we write $\mathsf{latt}$-$A$ for the category of $A$-lattices. Given a pair of $A$-lattices $M$ and $N$, we denote by $\Hom _A(M,N)$ the $\mathcal{O}$-module of all $A$-homomorphisms from $M$ to $N$. 
\item Tensor products are taken over $\mathcal{O}$. 
\item For an $\mathcal{O}$-order $A$, we denote by $\mathsf{latt}^{(\natural)}$-$A$ the full subcategory of $\mathsf{latt}$-$A$ consisting of $A$-lattices $M$ such that $M\otimes\mathcal{K}$ is projective as an $A\otimes\mathcal{K}$-module. 
\item The symbol $\delta_{i,j}$ means the Kronecker delta.
\item The identity matrix of size $n$ is denoted by $I_n$.
\end{enumerate}
%%%%%%%%%%%%%%%%%%%%%%%%%%
\subsection{Almost split sequences} 
%%%%%%%%%%%%%%%%%%%%%%%%%%

In order to introduce the stable Auslander--Reiten quivers for $\mathsf{latt}^{(\natural)}$-$A$, we recall irreducible, minimal, and almost split morphisms. Main references for details are \cite{A1} and \cite{AKM}. Let $\mathscr{A}$ be an abelian category with enough projectives and  $\mathscr{C}$ an additive full subcategory closed under extensions and direct summands. Let $f:L\to M$  be a morphism in $\mathscr{C}$. The morphism $f$ is called \textit{left minimal} if every  $h\in\End _{\mathscr{C}}(M)$ with $hf=f$ is an isomorphism, and is called \textit{left almost split} if it is not a section and every $h\in\Hom_{\mathscr{C}}(L,W)$ which is not a section factors through $f$. 
Dually, a morphism $g:M\to N$ in $\mathscr{C}$ is called \textit{right minimal} if every $h\in\End _{\mathscr{C}}(M)$ with $gh=g$ is an isomorphism, and is called \textit{right almost split} if it is not a retraction and every $h\in\Hom_{\mathscr{C}}(W,N)$ which is not a retraction factors through $g$.
A morphism $f$ is said to be \textit{left minimal almost split}  in $\mathscr{C}$ if $f$ is both left minimal and left almost split. Similarly, a \textit{right minimal almost split morphism} in $\mathscr{C}$ is defined.

\begin{proposition}[{\cite[Proposition 4.4]{A1}}]\label{seq} Let $L$, $M$ and $N$ be objects of $\mathscr{C}$. The following statements are equivalent for a short exact sequence
\[ 0 \longrightarrow L \xrightarrow{\quad g\quad } M\xrightarrow{\quad f\quad} N\longrightarrow 0. \]
\begin{enumerate}[(1)]
\item  $f$ is right almost split in $\mathscr{C}$,  and $g$ is left almost split in $\mathscr{C}$.
\item  $f$ is minimal right almost split in $\mathscr{C}$.
\item  $f$ is right almost split and $\End_{\mathscr{C}}L$ is local.
\item  $g$ is minimal left almost split in $\mathscr{C}$.
\item  $g$ is left almost split in $\mathscr{C}$ and $\End_{\mathscr{C}}N$ is local.
\end{enumerate}\end{proposition}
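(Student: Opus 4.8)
The plan is to prove $(1)\Leftrightarrow(2)\Leftrightarrow(3)$ together with its formal dual $(1)\Leftrightarrow(4)\Leftrightarrow(5)$; since $(1)$ lies in both chains, this yields all five equivalences. I would work throughout under the hypothesis --- implicit in the statement --- that the sequence is non-split, so that $g$ is not a section and $f$ is not a retraction, and I would repeatedly use that the ambient category (such as $\mathit{latt}$-$A$ for an $\O$-order $A$) is Krull--Schmidt: a non-isomorphism $X\to X$ is then neither a section nor a retraction, and direct-sum cancellation and the Hopfian property are available. The first ingredient is the observation that if $g$ is left almost split then $\End_{\mathscr{C}}L$ is local, and dually that if $f$ is right almost split then $\End_{\mathscr{C}}M$ is local. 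For the former, given a non-invertible $\phi\in\End_{\mathscr{C}}L$, it is not a section, so $\phi=\psi g$ for some $\psi\colon E\to L$; were $1_L-\phi$ also non-invertible we would likewise obtain $1_L-\phi=\psi'g$, whence $1_L=(\psi+\psi')g$ and $g$ would be a section --- a contradiction. Thus one of $\phi$, $1_L-\phi$ is invertible. The dual statement is proved identically, using that $f$ is not a retraction.

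The second ingredient is that, for a non-split sequence, minimality is automatic: if $\End_{\mathscr{C}}L$ is local then $f$ is right minimal, and dually if $\End_{\mathscr{C}}M$ is local then $g$ is left minimal. Indeed, if $h\in\End_{\mathscr{C}}E$ satisfies $fh=f$, then $f(h-1_E)=0$, so $h-1_E$ factors through $\Ker f=\Im g$ as $h-1_E=g\rho$ for some $\rho\colon E\to L$; hence $h$ induces $1_M$ on $M$ and $1_L+\rho g$ on $\Im g\cong L$. The element $\rho g\in\End_{\mathscr{C}}L$ is not invertible (otherwise its inverse composed with $\rho$ would split $g$), so $1_L+\rho g$ is invertible because $\End_{\mathscr{C}}L$ is local, and the five lemma then shows $h$ is an isomorphism. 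Granting these two ingredients, $(3)\Rightarrow(2)$ and $(1)\Rightarrow(3)$ are immediate (in $(1)$, the first ingredient supplies the local ring required for $(3)$), and dually for $(5)\Rightarrow(4)$ and $(1)\Rightarrow(5)$.

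The essential remaining step is the symmetry $(2)\Rightarrow(1)$: assuming $f$ is minimal right almost split, I must show $g$ is left almost split (it is automatically not a section). Let $h\colon L\to W$ be a morphism in $\mathscr{C}$ that is not a section, and form the pushout of the given sequence along $h$,
\[0\longrightarrow W\xrightarrow{\ j\ }F\xrightarrow{\ \bar q\ }M\longrightarrow 0,\]
with canonical morphism $e\colon E\to F$ satisfying $eg=jh$ and $\bar qe=f$. If this pushed-out sequence splits, then a retraction $\sigma$ of $j$ produces $u:=\sigma e\colon E\to W$ with $ug=\sigma jh=h$, so $h$ factors through $g$ and we are done. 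Otherwise $\bar q$ is not a retraction, so by right almost splitness of $f$ there is $v\colon F\to E$ with $fv=\bar q$; then $f(ve)=\bar qe=f$, and right minimality of $f$ forces $ve$ to be an automorphism of $E$. In particular $e$ is a split monomorphism and $v$ a split epimorphism.

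It then remains to derive a contradiction. From the previous paragraph, $F\cong E\oplus\Coker e$ with $\Coker e\cong W/\Im h$ (a general property of pushouts), and under this decomposition $\bar q$ identifies with $(f,0)$; hence $\Im j=\Ker\bar q=\Im g\oplus(W/\Im h)$, so the composite of $j$ with the projection $F\cong E\oplus(W/\Im h)\to W/\Im h$ --- which one checks is the canonical surjection $W\to W/\Im h$ --- is a split epimorphism. Therefore $\Im h$ is a direct summand of $W$ and $W\cong\Im h\oplus(W/\Im h)$; cancelling $W/\Im h$ from $\Im h\oplus(W/\Im h)\cong W\cong\Im g\oplus(W/\Im h)$ gives $\Im h\cong\Im g\cong L$, so the surjection $L\twoheadrightarrow\Im h$ is an isomorphism and $h$ is injective, hence splits --- contradicting that $h$ is not a section. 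Thus every non-section factors through $g$, i.e.\ $g$ is left almost split, which proves $(2)\Rightarrow(1)$; the dual $(4)\Rightarrow(1)$ follows by pulling back along non-retractions, and assembling all implications gives the five equivalences. The step I expect to be the main obstacle is precisely this pushout argument --- especially making the final passage (that $\Im h$ is a summand of $W$, so $h$ splits) airtight --- whereas everything else is formal manipulation of sections, retractions and the five lemma.
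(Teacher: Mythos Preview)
The paper does not prove this proposition; it merely cites \cite[Proposition~4.4]{A} and moves on. So there is no in-paper argument to compare yours against.

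Your argument is correct under the Krull--Schmidt and Hopfian hypotheses you explicitly invoke, and these do hold in $\mathit{latt}\text{-}A$ (lattices over a complete local $\mathcal{O}$ have semiperfect endomorphism rings and are Noetherian), which is the only case the paper actually uses. The two ``ingredients'' are standard and your verification of them is fine.

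One remark on the step you flagged as the main obstacle, the pushout argument for $(2)\Rightarrow(1)$: it can be made both shorter and independent of cancellation/Hopfian. Once you know $ve$ is an automorphism of $E$, consider $vj\colon W\to E$. Since $f(vj)=\bar q j=0$, there is $\mu\colon W\to L$ with $vj=g\mu$. On the other hand $ve$ restricts to an automorphism of $\operatorname{Im} g$ (apply the five lemma to the map of short exact sequences induced by $ve$ and $1_M$), so $veg=g\theta$ for some automorphism $\theta\in\End_{\mathscr{C}}L$. Then
\[
g\mu h \;=\; vjh \;=\; veg \;=\; g\theta,
\]
and since $g$ is monic, $\mu h=\theta$; hence $\theta^{-1}\mu$ is a left inverse for $h$, contradicting that $h$ is not a section. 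This bypasses the decomposition $F\cong E\oplus(W/\operatorname{Im} h)$, the Krull--Schmidt cancellation $\operatorname{Im} h\cong L$, and the Hopfian step, so it is a bit more robust than your version while following the same pushout strategy.
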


%Let $A$ be an $\mathcal{O}$-order. The \textit{radical} of $\Hom_A(M,N)$ is the $\mathcal{O}$-submodule $\mathrm{rad}_A(M,N)$ of $\Hom_A(M,N)$ consisting of all $f:M\to N$ such that $1_M-gf$ is invertible for any $g\in\Hom_A(N,  M)$.  Now, we define almost split sequences for $\mathsf{latt}^{(\natural)}$-$A$.

\begin{definition}\label{almost} Let $L$, $M$ and $N$ be objects of $\mathsf{latt}^{(\natural)}$-$A$. A short exact sequence in $\mathsf{latt}^{(\natural)}$-$A$
\[ 0 \longrightarrow L \longrightarrow M\xrightarrow{\quad p\quad} N\longrightarrow 0\]
is called an \textit{almost split sequence ending at $N$} if the following two conditions are satisfied:
\begin{enumerate}[(i)]
\item The morphism $p$ is right almost split in $\mathsf{latt}^{(\natural)}$-$A$. 
\item The $A$-lattice $L$ is indecomposable.
\end{enumerate}
\end{definition}

 Let $\mathbb{E}: 0\to L \to E \to M \to 0$ be an almost split sequence in $\mathsf{latt}^{(\natural)}$-$A$. Then, it follows from Proposition \ref{seq} that any almost split sequence ending at $M$ is isomorphic to $\mathbb{E}$ as short exact sequences. Similarly, any almost split sequence starting from $L$ is isomorphic to $\mathbb{E}$ as short exact sequences. We denote by $\mathscr{E}(M)$ the almost split sequence ending at $M$. Here, we set $\tau(M)=L$ and $\tau^{-1}(L)=(M)$, and we call both $\tau$ and $\tau^{-1}$ \textit{AR translations}.

\begin{definition} Let $M$ and $N$ be objects in $\latt ^{(\natural)}$-$A$. A morphism $f\in\Hom_A(M,N)$ is said to be an \textit{irreducible morphism}, provided that
\begin{enumerate}[(i)]
\item the morphism $f$ is neither a section nor a retraction,
\item if $f=f_2\circ f_1$ in $\mathsf{latt}^{(\natural)}$-$A$, then either $f_{1}$ is a section or $f_{2}$ is a retraction.
\end{enumerate}
\end{definition}

It is well-known that almost split sequences are characterized by irreducible morphisms. The arguments in \cite[V.5, Proposition 5.9]{ARS} work without change in our setting. Note in particular that \cite[V.5, Theorem 5.3]{ARS} also holds in our setting.

\begin{lemma}[{\cite[V.5, Proposition 5.9]{ARS}}] Let $M$ be an $A$-lattice in $\mathsf{latt}^{(\natural)}$-$A$. Then, a short exact sequence in $\mathsf{latt}^{(\natural)}$-$A$ 
\[ 0\longrightarrow L \xrightarrow{\quad f \quad} E \xrightarrow{\quad g\quad} M \longrightarrow 0\]
is isomorphic to $\mathscr{E}(M)$ if and only if the morphisms $f$ and $g$ are irreducible.
\end{lemma}

\begin{proposition}[{\cite[Proposition 1.15]{AKM}}]\label{AKM}
Let $A$ be a symmetric $\mathcal{O}$-order, $M$ an indecomposable $A$-lattice in $\mathsf{latt}^{(\natural)}$-$A$, and let $p:P\to M$ be the projective cover of $M$ and $\Omega_{A} (M)$ the first syzygy of $M$, which lies in $\mathsf{latt}^{(\natural)}$-$A$. Given an endomorphism $\varphi : M \to M$, we obtain the pullback diagram along $p$ and $\varphi$:
$$\begin{xy}
(0,15)*[o]+{0}="01",(20,15)*[o]+{\Omega_{A}(M)}="L",(40,15)*[o]+{E}="E", (60,15)*[o]+{M}="M",(80,15)*[o]+{0}="02",
(0,0)*[o]+{0}="03",(20,0)*[o]+{\Omega_{A}(M)}="L2",(40,0)*[o]+{P}="nP", (60,0)*[o]+{M}="nM",(80,0)*[o]+{0}="04",
\ar "01";"L"
\ar "L";"E"
\ar "E";"M"
\ar "M";"02"
\ar "03";"L2"
\ar "L2";"nP"
\ar "nP";"nM"_{p}
\ar "nM";"04"
\ar @{-}@<0.5mm>"L";"L2"
\ar @{-}@<-0.5mm>"L";"L2"
\ar "E";"nP"
\ar "M";"nM"^{\varphi}
\end{xy}$$
Then, the following statements are equivalent.
\begin{enumerate}[(1)]
\item The upper short exact sequence is isomorphic to $\mathscr{E}(M)$.
\item The following three conditions hold.
\begin{enumerate}[(i)]
\item The morphism $\varphi$ does not factor through $p$.
\item $\Omega_{A}(M)$ is an indecomposable $A$-lattice.
\item For all $f\in \mathrm{rad}\End _A(M)$, the morphism $\varphi \circ f$ factors through $p$.
\end{enumerate} 
\end{enumerate}
In particular, we have an isomorphism $\tau(M)\simeq \Omega_{A}(M)$.
\end{proposition}

%%%%%%%%%%%%%%%%%%%%%%%%%%%
\subsection{Stable Auslander--Reiten quivers} 
%%%%%%%%%%%%%%%%%%%%%%%%%%%

In this subsection, we introduce the stable Auslander--Reiten quiver for $\mathsf{latt}^{(\natural)}$-$A$. We follow the notation of \cite{Z}.

Given a quiver $Q$, we denote by $Q_{0}$ and $Q_1$ the set of vertices and arrows, respectively. A pair $(Q,v)$ of a quiver $Q$ and a map $v:Q_{1}\rightarrow \Z_{\geq 0}\times \Z_{\geq 0}$ is called a \textit{valued quiver}, and the values of the map $v$ are called \textit{valuations}. For an arrow $x\to y$ of $Q$, we write $v(x\to y)=(d_{xy}, d_{xy}')$, and if there is no arrow from $x$ to $y$, we understand that $d_{xy}=d_{xy}'=0$. If $v(x\to y)=(1,1)$ for all arrows $x\to y$ of $Q$, then $v$ is said to be \textit{trivial}. For each vertex $x\in Q_0$, we set
\[ x^{+}=\{y\in Q_0\ |\ x\to y \in Q_1\},\quad x^{-}=\{y\in Q_0\ |\  y\to x\in Q_1\}. \]
A quiver $Q$ is \textit{locally finite} if $x^{+}\cup x^{-}$ is a finite set for any $x\in Q_0$. A \textit{stable translation quiver} is a pair $(Q,\tau)$ of a locally finite quiver $Q$ without multiple arrows and a quiver automorphism $\tau$ satisfying $x^{-}=(\tau x)^{+}$. Let $\mathcal{C}$ be a full subquiver of a stable translation quiver $(Q,\tau)$. Then, $\mathcal{C}$ is a (connected) \textit{component} if the following three conditions are satisfied.
\begin{enumerate}[(i)]
\item $\mathcal{C}$ is stable under the quiver automorphism $\tau$.
\item $\mathcal{C}$ is a disjoint union of connected components of the underlying undirected graph.
\item  There is no proper subquiver of $\mathcal{C}$ that satisfies (i) and (ii).
\end{enumerate}
In particular, $(Q,\tau)$ is \textit{connected} if $Q$ satisfies the above three conditions.

\begin{remark} In standard textbooks, loops are not allowed when we define a stable translation quiver, for example \cite{B}. However, we note that the definition of a stable translation quiver in \cite{Z} admits loops, and we adopt this definition. \end{remark}  

A \textit{valued stable translation quiver} is a triple $(Q, v, \tau)$ such that
\begin{enumerate}[(i)]
\item $(Q,v)$ is a valued quiver, 
\item $(Q,\tau)$ is a stable translation quiver,
\item  $v(\tau y\to x)=(d_{xy}',d_{xy})$ for each arrow $x\to y$.
\end{enumerate}

A group $G\subset \mathrm{Aut}((Q,v,\tau))$ is said to be \textit{admissible} if each $G$-orbit intersects $x^{+}\cup \{x\}$ in at most one vertex and $\{x\}\cup x^{-}$ in at most one vertex, for any $x\in Q_0$. For an admissible group $G$, we may form the valued stable translation quiver $(Q/G,v_G,\tau _G)$ such that $Q/G$ is the $G$-orbit quiver with the induced map $v_G$ and translation $\tau_G$.  

Given a valued quiver $(\Delta, v)$, one can construct the valued stable translation quiver $(\Z\Delta,\tilde{v},\tau)$ as follows \cite{Ri}.
\begin{itemize}
\item  $(\Z\Delta)_0 = \Delta _0\times \Z$.
\item Draw arrows $(n,x)\to (n,y)$ and $(n-1,y)\to (n,x)$ whenever an arrow $x\to y$ exists in $\Delta$.
\item The valuations  of $\tilde{v}$ are defined by 
\[ \tilde{v}((n,x)\to (n,y))=(d_{xy},d_{xy}'),\quad  \tilde{v}((n-1,y)\to (n,x))=(d_{xy}',d_{xy}). \]
\item The translation $\tau$ is defined by $\tau((n,x))=(n-1,x)$.
\end{itemize}
We will write it simply $\Z\Delta$ when no confusion can arise. The valued stable translation quiver $\Z\Delta$ has no loops whenever $\Delta$ has no loops.

Let $(Q,v,\tau)$ be a connected valued stable translation quiver. A vertex $x$ of $Q$ is called \textit{periodic} if $x=\tau ^k x$ for some $k>0$. If there is a periodic vertex in $Q$, then all vertices of $Q$ are periodic. In this case, $(Q,v,\tau)$ is called \textit{periodic} \cite{HPR}. $(Q,v,\tau)$ is said to be \textit{smooth} if $v$ is trivial and $\sharp x^{+}=2$ for all $x\in Q_0$. 

\begin{definition}\label{def of subadditive} Let $(Q,v, \tau)$ be a valued stable translation quiver. A \textit{subadditive function} on $(Q,v, \tau)$ is a function $\ell$ from $Q_0$ to the set of non-negative integers  $\mathbb{Z}_{\geq 0}$ such that it satisfies
\[ \ell(x)+\ell(\tau x)\geq \sum _{y \in  x^{-}}d_{yx}\ell(y) \]
for all  $x\in Q_0$. A subadditive function $\ell$ is called \textit{additive} if the equality holds for all $x\in Q_0$. 
\end{definition}

\begin{theorem}[{\cite[p.653, 669]{Z}}]\label{Z} Let $(Q,v,\tau)$ be a non-periodic connected valued stable translation quiver which admits a non-zero subadditive function $\ell: Q_0 \to \mathbb{Z}_{\geq 0}$. Then, one of the following holds:
\begin{enumerate}[(i)]
\item $(Q,v,\tau)$ is smooth and $d$ is both additive and bounded.
\item $(Q,v,\tau)$ is of the form $\Z\Delta$ for some valued quiver $\Delta$. 
\end{enumerate} 
Moreover, if $Q$ has a cyclic path, then $(Q,v,\tau)$ is smooth and $\ell$ is additive.\end{theorem}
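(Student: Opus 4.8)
The plan is to adapt the Happel--Preiser--Ringel analysis of subadditive functions on valued graphs to the setting of translation quivers, handling the possible presence of loops separately; this is essentially Zhang's route in \cite{Z}. Throughout I will use the convention $d(x)=d(\tau x)$, so that the defining inequality at a vertex $x$ reads $2d(x)\geq\sum_{y\neq x}d_{yx}d(y)$, and I record the key point that a loop at $x$ contributes nothing to this sum.

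First I would treat the case in which $Q$ has a loop, say at a vertex $a$. Since $\tau$ is an automorphism there is a loop at every vertex of the $\tau$-orbit of $a$, and the axiom $x^{-}=(\tau x)^{+}$ forces arrows $\tau a\to a$ and $a\to\tau^{-1}a$; as $Q$ is $\tau$-non-periodic, $a$, $\tau a$, $\tau^{-1}a$ are pairwise distinct. Plugging $\tau a\in a^{-}$ (with $d(\tau a)=d(a)>0$) into $2d(a)\geq\sum_{y\neq a}d_{ya}d(y)$ shows that, apart from $a$ itself, $a$ has at most one further in-neighbour, with the valuations involved at most $2$ and equal to $1$ once that further neighbour is present; the dual statement holds for out-neighbours. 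I would then propagate this local picture through $Q$ by connectedness, matching in- and out-neighbours at adjacent vertices via $x^{-}=(\tau x)^{+}$, to conclude that $\sharp x^{+}=2$ with trivial valuation at every vertex --- that is, $Q$ is smooth --- and that the inequality is forced to be an equality throughout, so $d$ is additive; that $d$ is bounded then follows from additivity together with the smooth shape. This also yields the last assertion of the theorem in the loop case, a loop being a cyclic path.

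Next, suppose $Q$ has no loops. Being a connected loop-free stable translation quiver, $Q\simeq\Z T/G$ by the Riedtmann structure theorem (Theorem~\ref{Ried}), where $T$ is a directed tree and $G$ an admissible group of automorphisms of $\Z T$, and the pullback $\tilde d$ of $d$ is a non-zero subadditive function on $\Z T$. Restricting $\tilde d$ to one slice $T\hookrightarrow\Z T$ and comparing, via the mesh relations, its values on each mesh gives a non-zero subadditive function on the valued graph $\overline T$, so by \cite{HPR} $\overline T$ is Dynkin, Euclidean, or of infinite Dynkin type ($A_\infty$, $B_\infty$, $C_\infty$, $D_\infty$, $A_\infty^\infty$). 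For each of these the group $\mathrm{Aut}(\Z T)$ is explicitly known; since $Q$ is $\tau$-non-periodic, $G$ cannot meet $\langle\tau\rangle$ nontrivially, and a short case analysis --- using that graph automorphisms commute with $\tau$, that $\mathrm{Aut}(\overline T)$ is finite unless $\overline T=A_\infty^\infty$, and the admissibility constraints (which forbid loops and multiple arrows in the quotient) --- forces $G$ to be trivial, the only exception being $\overline T=A_\infty^\infty$ with $G$ generated by an automorphism that translates $\overline T$, in which case $\Z T/G$ is smooth and $\tilde d$ descends to a constant, hence additive and bounded, function. Thus either $G$ is trivial and $Q\simeq\Z T$ is of the form $\Z\Delta$, or $Q$ is smooth with $d$ additive and bounded. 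Finally, if a loop-free $Q\simeq\Z\Delta$ has a cyclic path then so does $\Delta$, and among the graphs above only $\tilde A_n$ admits a cyclic orientation; since $\Z\tilde A_n$ with that orientation is smooth, $Q$ is smooth and $d$ is additive, completing the ``moreover''.

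The main obstacle is the loop case: deducing from the presence of a single loop --- together with local finiteness, the translation axiom, connectedness and the subadditive inequality --- that the \emph{entire} quiver is smooth with $d$ additive and bounded requires a careful, vertex-by-vertex propagation keeping track of neighbours and valuations, and it is here that the precise interaction between the loop, the $\tau$-orbit through it, and the subadditivity (which ignores loops) has to be exploited. A secondary point demanding care is the verification that the restriction of a subadditive function on $\Z T$ to a slice is again subadditive on $\overline T$ with the correct valuation, which is precisely where the mesh relations enter.
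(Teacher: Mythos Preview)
The paper does not give its own proof of this theorem: it is quoted from Zhang \cite[p.653, 669]{Z} and stated without argument, so there is nothing in the paper to compare your proposal against.

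That said, your outline follows the expected route --- separate the loop case, then in the loop-free case invoke Riedtmann's structure theorem and push the subadditive function down to a slice to invoke \cite{HPR} --- and you correctly identify the two delicate points (propagating smoothness from a single loop; controlling the admissible group $G$). Two places would need tightening were you to write this out in full. First, in the $A_\infty^\infty$ case your claim that a nontrivial $G$ generated by a pure tree-translation yields a smooth quotient is not quite right as stated: such a translation is \emph{not} admissible (its orbits meet $y^-\cup\{y\}$ in two points), so the correct conclusion is that no nontrivial admissible $G$ survives the non-periodicity hypothesis, rather than that a smooth quotient arises this way. Second, your loop-case argument asserts boundedness of $d$ ``from additivity together with the smooth shape'' without indicating why; in a smooth quiver with loops the additivity relation at $x$ reads $2d(x)=d(\tau x)+d(\tau^{-1}x)$ after excluding the loop, and one still needs a short argument (e.g.\ convexity along the $\tau$-orbit plus positivity) to force $d$ constant on that orbit.
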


The following theorems are useful to describe stable translation quivers. The former is showed by Riedtmann \cite{Ri} and the latter is showed by Happel, Preiser and Ringel \cite{HPR}.

\begin{theorem}[Riedtmann]\label{Ried}
Let $(Q,\tau)$ be a stable translation quiver without loops and $\mathcal{C}$ a connected component of $(Q,\tau)$. Then, there exist a directed tree $T$ and an admissible 
group $G\subseteq \mathrm{Aut}(\Z T)$ such that $\mathcal{C}\simeq \Z T/G$ as stable translation quivers. Moreover, the underlying undirected tree $\overline{T}$ of $T$ is uniquely determined by $\mathcal{C}$, and the admissible group $G$ is unique up to conjugation in $\mathrm{Aut}(\Z T)$.
\end{theorem}
In Theorem \ref{Ried}, the underlying undirected tree $\overline{T}$ is called the \textit{tree class} of $\mathcal{C}$.  
If $\ell(\tau x)=\ell(x)$ and there are no loops in $Q$, then a subadditive function $\ell$ on $(Q,v,\tau)$ from Definition \ref{def of subadditive} restricts a function on the tree class $\overline{T}$ that satisfies
\[2\ell (x)\ge \sum_{y\to x \text{ in } T} d_{yx}\ell (y)+\sum_{x\to y \text{ in }T} d'_{xy}\ell (y),  \]
and it gives a positive semidefinite Cartan matrix.

\begin{theorem}[Happel, Preiser, Ringel]\label{tree class}
Let $(\Delta,v)$ be a connected valued quiver without loops and multiple arrows. If $\Delta$ admits a non-zero function $f:\Delta _0\to \mathbb{Q}_{\geq 0}$ that satisfies 
\[ 2f(x)\ge \sum_{y\in x^-} d_{yx}f(y)+\sum_{y\in x^+} d'_{xy}f(y) \quad \text{for}\quad x\in \Delta _0, \]
then the following statements hold.
\begin{enumerate}[(1)]
\item The underlying undirected graph $\overline{\Delta}$ is either a finite or infinite Dynkin diagram or a Euclidean diagram.
\item If the inequality is strict for some $x\in\Delta_0$, then $\overline{\Delta}$ is either a finite Dynkin diagram or $A_{\infty}$.
\item If the equality holds for all $x\in\Delta_0$, then $\overline{\Delta}$ is either an infinite Dynkin diagram or a Euclidean diagram.
\item If $f$ is unbounded, then $\overline{\Delta}$ is $A_{\infty}$.
\end{enumerate}
\end{theorem}

\begin{definition}\label{ARquiver}
\begin{enumerate}[(a)]
\item The \textit{stable Auslander--Reiten quiver} for $\mathsf{latt}^{(\natural)}$-$A$ is the valued quiver defined as follows:
\begin{itemize}
\item The set of vertices is a complete set of isoclasses of non-projective indecomposable $A$-lattices in $\latt^{(\natural)}$-$A$.
\item We draw a valued arrow $M\xrightarrow{(a,b)}N$ whenever there exist irreducible morphisms $M\to N$, where the valuation $(a,b)$ means:
\begin{enumerate}[(i)]
\item For a minimal right almost split morphism $f: E\to N$, $M$ appears $a$ times in $E$ as direct summands. 
\item For a minimal left almost split morphism $g: M\to E$, $N$ appears $b$ times in $E$ as direct summands.
\end{enumerate}
\end{itemize}
The stable Auslander--Reiten quiver for $\mathsf{latt}^{(\natural)}$-$A$, which we called the stable Auslander--Reiten quiver for $\latt$-$A$ in the introduction, is denoted by $\Gamma_s(A)$.
\item The union of components of $\Gamma_s(A)$ containing indecomposable Heller lattices is said to be the \textit{Heller component} of $A$, and denoted by $\mathcal{CH}_{A}$.
\end{enumerate}
\end{definition}
 
By the definition, we note that a component $\mathcal{C}$ of $\Gamma_s(A)$ does not have multiple arrows, and $\tau M$ exists for each vertex $M$ of $\mathcal{C}$ by the existence of almost split sequences $0\to \tau M\to E\to M\to 0$. Thus, the equation $M^{-}=(\tau M)^{+}$ holds and $(\mathcal{C},\tau)$ is a valued stable translation quiver. 
However, if $A$ is maximal or Morita equivalent to a Bass order, then the Auslander--Reiten quiver of $A$ has a loop \cite{W}.
Therefore, it is necessary to argue whether loops exist in the stable Auslander--Reiten quiver of $A$. 

First, we recall Miyata's theorem \cite[Theorem 1]{Mi}.

\begin{theorem}[Miyata]\label{Miyata}
Let $R$ be a commutative noetherian ring and $\Lambda$ an $R$-algebra which is of finite type as an $R$-module. 
Let $\mathbb{E}:0\to L\to E\to M \to 0$ be  a short exact sequence in $\mathsf{mod}$-$\Lambda$. If $E\simeq L\oplus M$ as $\Lambda$-modules, then $\mathbb{E}$ splits.
\end{theorem}

\begin{lemma}\label{valuation_of_loops}
Let $A$ be a symmetric $\mathcal{O}$-order, $\mathcal{C}$ a component of $\Gamma_s(A)$.
If a vertex $M\in\mathcal{C}_0$ has a loop, then $M\simeq \tau M$ and the valuation of the loop is $(1,1)$.
\end{lemma}
\begin{proof} Let $M$ be a vertex of $\mathcal{C}$. Suppose that $M\not\simeq \tau M$. Then, $\mathscr{E}(M)$ is of the form
\[ 0\longrightarrow \tau M\longrightarrow M^{\oplus l_1}\oplus \tau M^{\oplus l_2}\oplus E \longrightarrow M \longrightarrow 0, \]
where $l_1,$ $l_2\geq 1$.
Thus, we have 
\[ (l_1-1)\mathrm{dim}_{\kappa}(M\otimes\kappa)+(l_2-1)\mathrm{dim}_{\kappa}(\tau M\otimes\kappa) +\mathrm{dim}_{\kappa}(E\otimes\kappa)=0, \]
and hence $l_1=l_2=1$ and $E=0$. In this case, the short exact sequence $\mathscr{E}(M)$ splits by Theorem \ref{Miyata}, a contradiction.

Suppose that $M\simeq \tau M$. Then, $\mathscr{E}(M)$ is of the form
\[ 0\longrightarrow M\longrightarrow M^{\oplus l}\oplus E \longrightarrow M \longrightarrow 0, \]
where $l\geq 1$.
Thus, we have 
\[ (l-2)\mathrm{dim}_{\kappa}(M\otimes\kappa)+\mathrm{dim}_{\kappa}(E\otimes\kappa)=0, \]
and hence $l\leq 2$. If $l=2$, then $E=0$. In this case, the short exact sequence $\mathscr{E}(M)$ splits by Theorem \ref{Miyata}, a contradiction. Thus, $l=1$.
\end{proof}

%%%%%%%%%%%%%%%%%%%%%%%%%%%%%%%%%%%%%%%%%%%%%%%%%%%%%%%%%%%%%%%%%%%%%%
\subsection{Indecomposable modules over a special biserial algebra}
%%%%%%%%%%%%%%%%%%%%%%%%%%%%%%%%%%%%%%%%%%%%%%%%%%%%%%%%%%%%%%%%%%%%%%

Throughout this subsection, $\Lambda$ is a basic finite dimensional algebra over an algebraically closed field $\k$. Then, there exist a quiver $Q$ and an admissible ideal $\mathcal{I}$ in the path algebra $\k Q$ such that $\Lambda$ is isomorphic to the bound quiver algebra $\k Q/\mathcal{I}$. Moreover, there is a $\k$-linear equivalence between $\mathsf{mod}$-$\Lambda$ and $\mathsf{rep}(Q,\mathcal{I})$, where $\mathsf{rep}(Q,\mathcal{I})$ is the category of finite dimensional $\k$-linear representations of $\k Q/\mathcal{I}$, see \cite[Chapters II and III]{ASS}. We identify these two categories. 
\begin{definition} An algebra $\Lambda\simeq \k Q/\mathcal{I}$ is called \textit{special biserial} if the following two conditions are satisfied.
\begin{enumerate}[(i)]
\item For each vertex $x$ of $Q$, $\sharp x^{+}\leq 2$ and $\sharp x^{-}\leq 2$.
\item For each arrow $\alpha$ of $Q$, there exist at most one arrow $\beta$ such that $\alpha\beta\notin\mathcal{I}$ and at most one arrow $\gamma$ such that $\gamma\alpha\notin\mathcal{I}$.
\end{enumerate}
\end{definition}
Brauer graph algebras are symmetric special biserial algebras. The converse is also true by Schroll \cite{S}. Wald and Waschb\"{u}sch showed that special biserial algebras are of tame representation type by classifying indecomposable modules over a special biserial algebra into ``string modules'' and ``band modules'' \cite{WW}. Moreover, we can construct all indecomposable modules over a special biserial algebra by using a combinatorial method. In this subsection, we recall the construction of indecomposable modules over a special biserial algebra, see \cite{Erd}, \cite{HL} for details. 

\para \textbf{Strings and bands.}
Let $Q$ be a quiver. For an arrow $\alpha\in Q_1$, we denote by $s(\alpha)$ and $t(\alpha)$ the source of $\alpha$ and the target of $\alpha$, respectively.
Set $Q_{1}^{\ast}=\{ \alpha^{\ast}\ |\ \alpha\in Q_{1}\}.$ We understand that the symbol $\alpha^{\ast}$ is the formal inverse arrow of $\alpha$, that is, $\alpha^\ast$ is an arrow such that $s(\alpha^{\ast})=t(\alpha)$, $t(\alpha^{\ast})=s(\alpha)$ and $\alpha^{\ast\ast}=\alpha$. 
For a path $w=c_1c_2\cdots c_n$ in $Q$, we define $s(w)=s(c_1)$, $t(w)=t(c_n)$ and $w^{\ast}=c_n^{\ast}c_{n-1}^{\ast}\cdots c_1^{\ast}$. If $w$ is the path with the length $0$ at a vertex $a$, then we understand that $w$ is the trivial path $\varepsilon _a$ with $s(\varepsilon _a)=t(\varepsilon _a)=a$ and $\varepsilon _a^{\ast}=\varepsilon _a$. A \textit{walk} with length $n$ is a sequence $w=c_1c_2\cdots c_n$ such that each $c_i\in Q_1\cup Q_1^{\ast}$ and $t(c_i)=s(c_{i+1})$ for $i=1,2,\ldots , n-1$, and $w$ is called \textit{reduced} if $w$ is either a trivial path or a walk with positive length such that $c_{i+1}\neq c_{i}^{\ast}$ for all $i=1,2,\ldots ,n-1$. Given a walk $w$, the source $s(w)$ and the target $t(w)$ are also defined.
For two walks $w_1=c_{11}\cdots c_{1n} $ and $w_2=c_{21}\cdots c_{2m}$, the product $w_1w_2$ is defined by
\[ w_1w_2:=c_{11}\cdots c_{1n}c_{21}\cdots c_{2m} \]
when $t(w_1)=s(w_2)$. If $w$ is a walk with $s(w)=t(w)$, then one has also arbitrary powers $w^j$ of $w$.
Assume that $w=c_1c_2\cdots c_n$ is a reduced walk with positive length. The walk $w$ is called a \textit{reduced cycle} if $s(w)=t(w)$ and $c_n\neq c_1^{\ast}$. We say that a non-trivial path $p$ is \textit{contained} in $w$ if $p$ or $p^{\ast}$ is a subwalk of $w$.

A path $w$ is called a \textit{zero path} if $w$ belongs to $\mathcal{I}$. A zero path with minimal length is called a \textit{zero relation} of $\Lambda$. Let $p$ and $q$ be non-zero paths from a vertex $a$ to a vertex $b$. If $\lambda p+\mu q\in\mathcal{I}$ for some $\lambda\neq 0$ and $\mu\neq 0$, then the pair $(p,q)$ is called a \textit{binomial relation} of $\Lambda$.

\begin{definition}\label{string} A reduced walk $w$ is said to be a \textit{string path} of $\Lambda$ if each path contained in $w$ is neither a zero relation nor a maximal subpath of a binomial relation of $\Lambda$.
\end{definition}

\begin{definition}  A non-trivial reduced cycle is said to be a \textit{band path} of $\Lambda$ if each of its powers is a string path and it is not a power of a string path with less length.
\end{definition}

\para \textbf{String modules.}
For each string path $w$ of $\Lambda$, the \textit{string module} $M(w)$ is defined as follows. If $w=\varepsilon_a$, then $M(w)$ is the simple $\Lambda$-module corresponding to $a$. For a non-trivial $w=c_1c_2\cdots c_n$, $M(w)$ is the $\k$-linear representation $(M(w)_a, M(w)_\alpha)$ given by the following. For $1\leq i\leq n+1$, we set $\k(i)=\k$. Given a vertex $a$ of $Q$, we define $M(w)_a=\bigoplus _{i\in \mathcal{W}_a}\k(i)$, where 
\[ \mathcal{W}_a=\{ i \ |\ s(c_i)=a\}\cup \{ n+1\ |\ t(c_n)=a\}. \]
For $1\leq i\leq n$, we define the $\k$-linear map $f_{c_i}$ by
\[ f_{c_i}:\left\{\begin{array}{lll}
\k(i) \longrightarrow \k(i+1), & x\longmapsto x &\text{if $c_i\in Q_1$}, \\
\k(i+1) \longrightarrow \k(i), & x\longmapsto x &\text{if $c_i\in Q_1^\ast$}.
\end{array}\right. \]
Given an arrow $\alpha$ of $Q$, we define $M(w)_\alpha$ as the direct sum of the $\k$-linear maps $f_{c_i}$ such that $c_i=\alpha$ or $c_i^{\ast}=\alpha$. 

\para \textbf{Band modules.}
Let $w=c_1c_2\cdots c_n$ be a band path of $\Lambda$ and $V$ a finite dimensional indecomposable left $\k[x,x^{-1}]$-module. 
We construct the \textit{band module} $N(w, V)$ corresponding to $w$ and $V$ as follows. For $1\leq i\leq n$, we set $V(i)=V$. For $1\leq i \leq n$, let $f_{c_i}'$ be the $\k$-linear map defined by
\[ f_{c_i}':\left\{\begin{array}{lll}
V(i) \longrightarrow V(i+1), & v\longmapsto v &\text{if $1\leq i\leq n-1$ and $c_i\in Q_1$}, \\
V(i+1) \longrightarrow V(i), & v\longmapsto v &\text{if $1\leq i\leq n-1$ and $c_i\in Q_1^\ast$},\\
V(n) \longrightarrow V(1), & v\longmapsto xv &\text{if $i=n$ and $c_n\in Q_1$}, \\
V(1) \longrightarrow V(n), & v\longmapsto x^{-1}v &\text{if $i=n$ and $c_n\in Q_1^\ast$}.
\end{array}\right. \]
For a vertex $a$ of $Q$, we define $N(w,V)_a=\bigoplus _{i\in \mathcal{W}_a'}V(i)$, where 
\[ \mathcal{W}_a'=\{ i \ |\ s(c_i)=a\}. \]
For an arrow $\alpha$ of $Q$, we define $N(w, V)_\alpha$ as the direct sum of the $\k$-linear maps $f_{c_i}'$ such that $c_i=\alpha$ or $c_i^{\ast}=\alpha$. 

\begin{theorem}[{\cite[(2.3) Proposition]{WW}}]\label{stringband}
Let $\Lambda$ be a special biserial algebra. 
Then, the disjoint union of string modules, band modules and all  projective-injective modules corresponding to the binomial relations forms a complete set of isoclasses of finite dimensional  indecomposable $\Lambda$-modules.
\end{theorem} 

\begin{remark}
\begin{enumerate}
\item Let $w_1$ and $w_2$ be string paths of $\Lambda$. Then, the string modules $M(w_1)$ and $M(w_2)$ are isomorphic each other if and only if $w_2=w_1$ or $w_2=w_1^\ast$.
\item Let $w=c_1\cdots c_n$ be a band path. A \textit{rotation} of $w$ is a walk of the form $c_{i+1}\cdots c_n c_1\cdots c_{i}$. Given two band paths $w_1$ and $w_2$, the band modules $N(w_1,V)$ and $N(w_2,V)$ are isomorphic each other if and only if $w_2$ is a rotation of $w_1$ or a rotation of $w_1^{\ast}$.
\item A finite dimensional left $\k[x,x^{-1}]$-module is a finite dimensional $\k$-vector space together with a $\k$-linear automorphism $f$. If the module is indecomposable, then $f$ is similar to a Jordan block 
\[ J(\lambda,m) := \left(\begin{array}{ccccc}
\lambda & 1             & \cdots & \cdots & 0 \\
0            & \lambda  &  \cdots          & \cdots & 0 \\
\vdots    &                 & \ddots  &  & \vdots  \\
0           &     \cdots   &      \cdots      & \lambda & 1 \\
0           &     \cdots   &  \cdots &  0      & \lambda 
\end{array}\right) \]
for some $\lambda\in\k^{\times}$ and the size $m\in\mathbb{Z}_{>0}$. 
\end{enumerate}
\end{remark}

%%%%%%%%%%%%%%%%%%%%%%%%%%%%%%%%%%%%%
\section{The Kronecker algebra and almost split sequences} 
%%%%%%%%%%%%%%%%%%%%%%%%%%%%%%%%%%%%%

The main aim of this section is to present a complete list of isoclasses of indecomposable Heller lattices over the Kronecker algebra $A=\mathcal{O}[X,Y]/(X^{2},Y^{2})$, and compute almost split sequences ending at non-periodic indecomposable Heller lattices. From this section to the end of this paper, we set $A=\mathcal{O}[X,Y]/(X^{2},Y^{2})$.
For a positive integer $k$, we denote by $\{e_l\}_{l=1,2,\ldots,k}$ the canonical $\mathcal{O}$-basis of $\mathcal{O}^{\oplus k}$. Then an $\mathcal{O}$-basis of the direct sums of $k$ copies of $A$ is given by $\{e_l, Xe_l, Ye_l, XYe_l\}_{l=1,2,\ldots,k}$. 
Since $A\otimes \kappa$ is the Brauer graph algebra associated with one loop and one vertex with multiplicity one, the algebra $A\otimes\kappa$ is a special biserial algebra, which is given by the quiver with one vertex and two loops $\beta_1,\beta_2$ bound by the relations $\beta_1^2=\beta_2^2=0$ and $\beta_1\beta_2-\beta_2\beta_1=0$, where $\beta_1=X\otimes 1$, $\beta_2=Y\otimes 1\in A\otimes\kappa$.

\subsection{Indecomposable modules and Heller lattices}\label{indec mod Heller}
In this subsection,  we give a complete list of isoclasses of Heller lattices over $A$, and explain some properties of non-periodic Heller lattices. 

For simplicity, we visualize an $A\otimes\kappa$-module as follows:
\begin{itemize}
\item vertices represent basis vectors of the underlying $\kappa$-vector spaces,
\item arrows of the form $\longrightarrow$ represent the action of $X$, and $\dashrightarrow$ represent the action of $Y$. 
\item If there is no arrow (resp. dotted arrow) starting at a vertex, then $X$ (resp. $Y$) annihilates the corresponding basis element.
\end{itemize}
For example, the unique indecomposable projective module $A\otimes\kappa$ is described as
$$
A\otimes\kappa=\kappa 1\oplus \kappa X \oplus \kappa Y \oplus \kappa XY =  
\begin{xy}
(0,0)*[o]+{1}="1",(15,7)*[o]+{X}="X",(15,-7)*[o]+{Y}="Y",(30,0)*[o]+{XY}="XY",
\ar @{-->}"1";"Y"_{Y}
\ar "1";"X"^{X}
\ar @{-->} "X";"XY"^{Y}
\ar "Y";"XY"_{X}
\end{xy}. 
$$

By using the construction of indecomposable modules which is explained in Subsection 1.3, we obtain all finite dimensional indecomposable modules over $A\otimes\kappa$. 
\begin{enumerate}[(i)]
\item The string module $M(m):=M((\beta_1^{\ast}\beta_2)^m)$ $(m\in\mathbb{Z}_{\geq 0})$ is given by the formula:
$$ M(m) =\left(\bigoplus _{i=1}^{m}\kappa u_i\right)\oplus\left(\bigoplus_{j=0}^{m}\kappa v_j\right) = \begin{xy}
(0,7)*[o]+{u_1}="u1",(20,12)*[o]+{v_0}="v0",(20,-7)*[o]+{v_{m-1}}="vp-1",(0,-7)*[o]+{u_{m-1}}="up-1",
(0,2)*[o]+{\vdots}="2",(0,-2)*[o]+{\vdots}="22",
(0,-12)*[o]+{u_m}="up",(20,7)*[o]+{v_1}="v1",(20,-12)*[o]+{v_m}="vp",
(20,2)*[o]+{\vdots}="11",(20,-2)*[o]+{\vdots}="111",
\ar @{-->}"up";"vp"
\ar "up";"vp-1"
\ar @{-->}"u1";"v1"
\ar @{-->}"up-1";"vp-1"
\ar "u1";"v0"
\end{xy}=\left\{\begin{array}{ll}
Xu_i=v_{i-1} & 1\leq i \leq m, \\
Yu_i=v_i & 1\leq i\leq m, \\
Xv_i=Yv_i=0 & 0\leq i\leq m.
\end{array}\right.$$

\item The string module $M(-m):=M((\beta_1\beta_2^\ast)^m)$ $(m\in\mathbb{Z}_{\geq 0})$ is given by the formula:
$$  M(-m) \hspace{-1mm}
=\hspace{-1mm}\left(\bigoplus _{i=1}^{m+1}\kappa u_i\right)\oplus\left(\bigoplus_{j=1}^{m}\kappa v_j\right) \hspace{-1mm}
=\hspace{-4mm} \begin{xy}
(0,7)*[o]+{u_1}="u1",
(20,-7)*[o]+{v_{m-1}}="vp-1",(0,-12)*[o]+{u_{m}}="up-1",
(0,-3)*[o]+{\vdots}="2",(0,2)*[o]+{u_2}="u2",(20,2)*[o]+{v_2}="v2",
(0,-17)*[o]+{u_{m+1}}="up",(20,-12)*[o]+{v_{m}}="vp",
(20,7)*[o]+{v_1}="v1",
(20,-2)*[o]+{\vdots}="11",%(20,-1)*[o]+{\vdots}="111",
\ar @{-->}"up";"vp"
\ar @{-->}"up-1";"vp-1"
\ar @{-->}"u2";"v1"
\ar "u1";"v1"
\ar "u2";"v2"
\ar "up-1";"vp"
\end{xy}
\hspace{-3mm}=\left\{\begin{array}{ll}
Xu_i=v_{i} & 1\leq i \leq m, \\
Xu_{m+1}=0, & \\
Yu_i=v_{i-1} & 2\leq i\leq m+1, \\
Xv_i=Yv_i=0 & 1\leq i\leq m.
\end{array}\right.$$

\item The string module $M(0)_{n}:=M((\beta_1\beta_2^{\ast})^{n-1}\beta_1)$ $(n\in\mathbb{Z}_{\geq 1})$ is given by the formula:
$$  M(0)_{n} \hspace{-1mm}=\hspace{-1mm}\left(\bigoplus _{i=1}^{n}\kappa u_i\right)\oplus\left(\bigoplus_{j=1}^{n}\kappa v_j\right) \hspace{-1mm}=\hspace{-3mm}
 \begin{xy}
(0,7)*[o]+{u_1}="u1",
(20,-7)*[o]+{v_{n-1}}="vp-1",(0,-7)*[o]+{u_{n-1}}="up-1",
(0,-2)*[o]+{\vdots}="2",(0,2)*[o]+{u_2}="u2",(20,2)*[o]+{v_2}="v2",
(0,-12)*[o]+{u_{n}}="up",(20,-12)*[o]+{v_{n}}="vp",
(20,7)*[o]+{v_1}="v1",
(20,-2)*[o]+{\vdots}="11",%(20,-1)*[o]+{\vdots}="111",
\ar "up";"vp"
\ar "up-1";"vp-1"
\ar @{-->}"u2";"v1"
\ar "u1";"v1"
\ar "u2";"v2"
\ar @{-->}"up";"vp-1"
\end{xy}
\hspace{-3mm}
=\left\{\begin{array}{ll}
Xu_i=v_{i} & 1\leq i \leq n, \\
Yu_1=0, &  \\
Yu_i=v_{i-1} & 2\leq i\leq n, \\
Xv_i=Yv_i=0 & 1\leq i\leq n.
\end{array}\right.$$

\item The string module $M(\infty)_{n}:=M(\beta_2(\beta_1^{\ast}\beta_2)^{n-1})$ $(n\in\mathbb{Z}_{\geq 1})$ is given by the formula:
$$ M(\infty)_{n}=\left(\bigoplus _{i=1}^{n}\kappa u_i\right)\oplus\left(\bigoplus_{j=1}^{n}\kappa v_j\right) \hspace{-1mm}=\hspace{-3mm} 
\begin{xy}
(0,12)*[o]+{u_1}="u0",
(0,7)*[o]+{u_2}="u1",(20,12)*[o]+{v_1}="v0",(20,-7)*[o]+{v_{n-1}}="vp-1",(0,-7)*[o]+{u_{n-1}}="up-1",
(0,2)*[o]+{\vdots}="2",(0,-2)*[o]+{\vdots}="22",
(0,-12)*[o]+{u_n}="up",(20,7)*[o]+{v_2}="v1",(20,-12)*[o]+{v_n}="vp",
(20,2)*[o]+{\vdots}="11",(20,-2)*[o]+{\vdots}="111",
\ar @{-->}"up";"vp"
\ar @{-->}"u0";"v0"
\ar "up";"vp-1"
\ar @{-->}"u1";"v1"
\ar @{-->}"up-1";"vp-1"
\ar "u1";"v0"
\end{xy} \hspace{-3mm}=\left\{\begin{array}{ll}
Xu_1=0, \\
Xu_i=v_{i-1} & 2\leq i \leq n, \\
Yu_i=v_i & 1\leq i\leq n, \\
Xv_i=Yv_i=0 & 1\leq i\leq n.
\end{array}\right.$$

\item Let $V$ be a finite dimensional indecomposable left $\kappa[x,x^{-1}]$-module. Assume that $V$ is represented by $x\mapsto J(\lambda, n)$ with respect to a basis of $V$ for some $\lambda\in\kappa^{\times}$ and $n\in\mathbb{Z}_{\geq 1}$. The band module $M(\lambda)_{n}:=N(\beta_2^{\ast}\beta_1, V)$ is given by the formula:
$$ M(\lambda)_{n}=\left(\bigoplus _{i=1}^{n}\kappa u_i\right)\oplus\left(\bigoplus_{j=1}^{n}\kappa v_j\right) =
\left\{\begin{array}{ll}
Xu_i=v_{i} & 1\leq i \leq n, \\
Yu_1=\lambda v_1,\\
Yu_i=\lambda v_i +v_{i-1} &  2\leq i \leq n,\\
Xv_i=Yv_i=0 & 1\leq i\leq n.
\end{array}\right.$$
\end{enumerate}

Throughout this paper, we adopt the $\kappa$-basis of an indecomposable $A\otimes\kappa$-module described above. 

\begin{lemma}\label{indecc}
The set of the $A\otimes\kappa$-modules 
\[ \{ M(m) \mid m\in\mathbb{Z}\}\sqcup\{ M(\lambda)_{n} \mid \lambda\in \mathbb{P}^{1}(\kappa),\ n\in\mathbb{Z}_{\geq 1}\} \sqcup \{A\otimes\kappa\}, \]
where $\mathbb{P}^1(\kappa)$ is the projective line of $\kappa$, forms a complete set of isoclasses of finite dimensional indecomposable modules over $A\otimes\kappa$.
\end{lemma}
\begin{proof}
The statement follows from Theorem \ref{stringband}.
\end{proof}

\begin{remark}\label{ARseq} Almost split sequences for $\mathsf{mod}$-$\kappa[X,Y]/(X^2,Y^2)$ are known to be as follows:
$$
\begin{array}{ll}
0\longrightarrow M(-1)\longrightarrow (A\otimes\kappa)\oplus M(0)^{\oplus 2}\longrightarrow M(1)\longrightarrow 0&\\
0\longrightarrow M(m-1) \longrightarrow M(m)\oplus M(m)\longrightarrow M(m+1)\longrightarrow 0 & \text{if $m\neq 0$}\\
0\longrightarrow M(\lambda)_{n} \longrightarrow  M(\lambda)_{n-1}\oplus  M(\lambda)_{n+1}\longrightarrow M(\lambda)_{n}\longrightarrow 0 & n\geq 1, \lambda\in \mathbb{P}^1(\kappa)
\end{array}
$$
Here, if $n=1$, then we understand that $M(\lambda)_0=0$.
\end{remark}

\para \textbf{Heller lattices.}\label{Heller lattices}
Let $M$ be a non-projective indecomposable $A\otimes\kappa$-module given in Lemma \ref{indecc}. We view $M$ as an $A$-module. Then, the projective cover of $M$ as an $A$-module is given by $\pi_M:A^{\oplus\sharp\{u_i\}}\to M,\ e_i\mapsto u_i$.
For $m\in\mathbb{Z}$, $n\in\mathbb{Z}_{\geq 1}$ and  $\lambda\in\mathbb{P}^1(\kappa)$, we define the Heller $A$-lattices $Z_n$ and $Z_{m}^{\lambda}$ to be the $A$-lattices 
\[ Z_m:=\mathrm{Ker}(\pi_{M(m)}),\quad Z_n^{\lambda}:=\mathrm{Ker}(\pi_{M(\lambda)_n}). \]
We denote by $\mathbb{B}(m)$ and $\mathbb{B}(\lambda)_n$ the following $\mathcal{O}$-basis of Heller lattices $Z_m$ and $Z_n^{\lambda}$, respectively:
For $m>0$, 
\begin{align*} 
Z_m= &\ \mathcal{O}\varepsilon e_1 \oplus \mathcal{O}\varepsilon Xe_1 \oplus \mathcal{O} (Ye_1-Xe_2) \oplus \mathcal{O} XYe_1 \\
& \oplus \mathcal{O}\varepsilon e_2 \oplus \mathcal{O}\varepsilon Xe_2 \oplus \mathcal{O} (Ye_2-Xe_3) \oplus \mathcal{O} XYe_2 \\
& \oplus \cdots \\
&  \oplus \mathcal{O}\varepsilon e_{m-1} \oplus \mathcal{O}\varepsilon Xe_{m-1} \oplus \mathcal{O} (Ye_{m-1}-Xe_m) \oplus \mathcal{O} XYe_{m-1} \\
& \oplus \mathcal{O}\varepsilon e_m \oplus \mathcal{O}\varepsilon Xe_m \oplus \mathcal{O} \varepsilon Ye_m\oplus \mathcal{O} XYe_m, \\
 Z_0= &\ \mathcal{O}\varepsilon e_1\oplus\mathcal{O} Xe_1\oplus\mathcal{O}Ye_1\oplus\mathcal{O}XYe_1, \\
Z_{-m}= &\ \mathcal{O}\varepsilon e_1 \oplus \mathcal{O}\varepsilon Xe_1 \oplus \mathcal{O} Ye_1\oplus \mathcal{O} XYe_1 \\
& \oplus \mathcal{O}\varepsilon e_2 \oplus \mathcal{O}\varepsilon Xe_2 \oplus \mathcal{O} (Ye_2-Xe_1) \oplus \mathcal{O} XYe_2 \\
& \oplus \cdots \\
& \oplus \mathcal{O}\varepsilon e_{m} \oplus \mathcal{O}\varepsilon Xe_{m} \oplus \mathcal{O} (Ye_{m}-Xe_{m-1})\oplus \mathcal{O} XYe_{m} \\
& \oplus \mathcal{O}\varepsilon e_{m+1} \oplus \mathcal{O}Xe_{m+1} \oplus \mathcal{O}  (Ye_{m+1}-Xe_{m})\oplus \mathcal{O} XYe_{m+1}.
\end{align*} 
For $n>1$,
\begin{align*}
Z_n^{\lambda} = &\ \mathcal{O}\varepsilon e_1 \oplus \mathcal{O} \varepsilon Xe_1 \oplus \mathcal{O} (Ye_1-\lambda Xe_1) \oplus \mathcal{O}XY e_1 \\
&\oplus \mathcal{O}\varepsilon e_2 \oplus \mathcal{O} \varepsilon Xe_2 \oplus \mathcal{O} (Ye_2-\lambda Xe_2-Xe_1) \oplus \mathcal{O}XY e_2 \\
&\oplus \cdots \\
&\oplus \mathcal{O}\varepsilon e_n \oplus \mathcal{O} \varepsilon Xe_n \oplus \mathcal{O}(Ye_n-\lambda Xe_n-Xe_{n-1}) \oplus \mathcal{O}XY e_n\\
Z_n^{\infty} = &\ \mathcal{O}\varepsilon e_1 \oplus \mathcal{O} Xe_1 \oplus \mathcal{O} (Ye_1-Xe_2) \oplus \mathcal{O}XY e_1 \\
&\oplus \mathcal{O}\varepsilon e_2 \oplus \mathcal{O} \varepsilon Xe_2 \oplus \mathcal{O} (Ye_2-Xe_3) \oplus \mathcal{O}XY e_2 \\
&\oplus \cdots  \\
&\oplus \mathcal{O}\varepsilon e_{n-1} \oplus \mathcal{O} \varepsilon Xe_{n-1} \oplus \mathcal{O} (Ye_{n-1}-Xe_n) \oplus \mathcal{O}XY e_{n-1} \\
&\oplus \mathcal{O}\varepsilon e_n \oplus \mathcal{O} \varepsilon Xe_n \oplus \mathcal{O} \varepsilon Ye_n \oplus \mathcal{O}XY e_n,
\end{align*}

and

\begin{align*}
Z_1^{\lambda} = &\ \mathcal{O}\varepsilon e_1 \oplus \mathcal{O} \varepsilon Xe_1 \oplus \mathcal{O} (Ye_1-\lambda Xe_1) \oplus \mathcal{O}XY e_1 ,\\
Z_1^{\infty} = &\ \mathcal{O}\varepsilon e_1 \oplus \mathcal{O} Xe_1 \oplus \mathcal{O} \varepsilon Ye_1 \oplus \mathcal{O}XY e_1 .
\end{align*}

From now on, we explain some properties of the Heller lattices. The main claim of this subsection is the following proposition.

\begin{proposition}\label{Heller}
For $m\in\mathbb{Z}$, $n\in \Z_{\geq 1}$ and $\lambda\in\mathbb{P}^1(\kappa)$, let $Z_m$, $Z_n^{\lambda}$ be $A$-lattices as above. Then the following statements hold.
\begin{enumerate}[(1)]
\item There are isomorphisms 
\[ Z_m\otimes \kappa\simeq M(m-1)\oplus M(m),\quad Z_n^{\lambda}\otimes\kappa\simeq M(\lambda)_n\oplus M(-\lambda)_n, \]
where we set $-\infty=\infty$.
\item The Heller lattice $Z_m$ is indecomposable.
\end{enumerate}
\end{proposition}

The proof of (1) in Proposition \ref{Heller} is straightforward. It follows from the statement (1) that the number of indecomposable direct summands of the Heller lattices described in \ref{Heller lattices} is at most two. Furthermore, the statement (1) also implies that the Heller lattices $Z_n$ and $Z_m$ are not isomorphic whenever $m\neq n$. We use the next lemma to prove the statement (2) in Proposition \ref{Heller}.

\begin{lemma}\label{divide} Let $Z$ be a Heller lattice over $A$. Then, the rank of $Z$ as an $\mathcal{O}$-module is divisible by four.
\end{lemma}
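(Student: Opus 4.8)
The plan is to compute the $\O$-rank of each Heller lattice $Z$ directly from the explicit $\O$-bases displayed just above the statement, and observe in each of the four families that the rank is a multiple of four. This is essentially a counting exercise, so the ``main obstacle'' is really just bookkeeping: making sure that the listed generators are indeed an $\O$-basis (they are, by construction of the kernel of a projective cover) and that no cancellation or redundancy has been overlooked.

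First I would recall that for a non-projective indecomposable $A\otimes\kappa$-module $M$ with $s$ arrows of the form $u_i$ in its string/band picture, the projective cover is $\pi_M\colon A^{\oplus s}\to M$, so $\rank A^{\oplus s}=4s$ while $\rank M=\dim_\kappa M$; hence $\rank Z=\rank\Ker(\pi_M)=4s-\dim_\kappa M$. Thus it suffices to check, for each of $M(p)$, $M(-p)$, $M(\lambda)_p$, $M(\infty)_p$, that $4s-\dim_\kappa M\equiv 0\pmod 4$, i.e.\ that $\dim_\kappa M\equiv 0\pmod 4$ — equivalently that the number of basis vectors listed for $Z_p$, $Z_{-p}$, $Z_p^\lambda$, $Z_p^\infty$ is divisible by four. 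Alternatively, and more self-containedly, I would simply tally the four blocks in each displayed decomposition.

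Carrying this out: for $Z_p=\Ker(\pi_p^+)$ the four bracketed blocks contribute $p$, then $1+(p-1)=p$, then $p$, then $p$ vectors, giving $\rank Z_p=4p$. For $Z_{-p}=\Ker(\pi_p^-)$ the blocks contribute $p+1$, then $\sum_{i=1}^p 2 = 2p$, then $1+p$, then $p+1$, totalling $4p+4=4(p+1)$. For $Z_p^\lambda$ the blocks give $p$, $p$, then $1+(p-1)=p$, then $p$, totalling $4p$. For $Z_p^\infty$ the blocks give $p$, then $1+(p-1)=p$, then $(p-1)+1=p$, then $p$, again totalling $4p$. In every case the rank is divisible by four. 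Since any Heller lattice $Z$ is a direct summand of one of these four $A$-lattices, $\rank Z \le 4p$ and (being a lattice) it has some rank; but to conclude divisibility for the \emph{summand} one argues instead that $Z\otimes\K$ is a direct summand of the projective $A\otimes\K$-module $(Z_\bullet)\otimes\K$, and since the unique indecomposable projective $A\otimes\K$-module $A\otimes\K$ has dimension four, $Z\otimes\K\cong (A\otimes\K)^{\oplus r}$ for some $r\ge 1$, whence $\rank Z=\dim_\K(Z\otimes\K)=4r$.

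The step I expect to require the most care is the last one — passing from the ambient kernel to an arbitrary indecomposable summand $Z$. One must genuinely invoke the property $(\natural)$ (already recorded for Heller lattices in the text) together with the classification of projective $A\otimes\K$-modules, rather than the rank count for the ambient lattice, since a priori a direct summand of a rank-$4p$ lattice need not have rank divisible by four. Once it is observed that $(\natural)$ forces $Z\otimes\K$ to be projective over the local ring $A\otimes\K$ — whose only indecomposable projective has $\K$-dimension $4$ — the divisibility of $\rank Z$ by four is immediate, and the explicit basis computations above serve mainly as a sanity check that the ambient lattices have the expected ranks.
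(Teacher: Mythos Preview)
Your final argument---$Z$ satisfies $(\natural)$, so $Z\otimes\K$ is projective over the local algebra $A\otimes\K$, whose unique indecomposable projective has $\K$-dimension $4$, hence $\rank Z=\dim_\K(Z\otimes\K)\in 4\Z$---is correct and is exactly the paper's one-line proof (it is stated immediately before the lemma).

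The preceding ``sanity check'' is unnecessary and in fact contains errors you should be aware of. First, the formula $\rank Z=4s-\dim_\kappa M$ is wrong: $M$ is an $A\otimes\kappa$-module, so as an $\O$-module it is torsion and has $\O$-rank $0$, not $\dim_\kappa M$. The correct relation is simply $\rank Z=\rank A^{\oplus s}=4s$, which already makes the divisibility of the \emph{ambient} kernel obvious without any basis-counting. Second, your block-count for $Z_{-p}$ does not add up: $(p+1)+2p+(1+p)+(p+1)=5p+3$, not $4(p+1)$; the second block in the displayed formula should be read as $\bigl(\bigoplus_{i=1}^{p}\O(Xe_i-Ye_{i+1})\bigr)\oplus\O Xe_{p+1}$, contributing $p+1$ vectors. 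None of this affects the lemma, since---as you correctly recognise---the basis-counting cannot reach an arbitrary direct summand anyway; only the $(\natural)$ argument does.
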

\begin{proof} Let $Z$ be a Heller $A$-lattice. Then, $Z\otimes\K$ is projective as an $A\otimes\K$-module. On the other hand, the unique projective indecomposable $A\otimes\K$-module is $A\otimes\K$, whose dimension is four. This gives the desired conclusion.\end{proof}

\para \textbf{Proof of (2) in Proposition \ref{Heller}.}
For an integer $m$, we obtained an isomorphism $Z_m\otimes \kappa\simeq M(m)\oplus M(m-1)$ by Proposition \ref{Heller} (1). Assume that $Z_m$ is decomposable. We write $Z_m=Z^1\oplus Z^2$ with $Z^i\neq 0\ (i=1,2)$. By the Krull--Schmidt--Azumaya theorem, we would obtain two isomorphisms $Z^1\otimes\kappa\simeq M(m)$ and $Z^2\otimes\kappa\simeq M(m-1)$. On the other hand,  the dimension of $M(m)$ as a $\kappa$-vector space is odd, a contradiction with Lemma \ref{divide}. Therefore, $Z_m$ is an indecomposable $A$-lattice, and we have completed the proof of (2) in Proposition \ref{Heller}.

%%%%%%%%%%%%%%%%%%%%%%%%%%%%%%%%%%%%%%%%%%%%%%%%%%%%%%%%%%%%%%%%%%%%%%%%%%%%%%%%
\subsection{The non-periodic Heller component}
%%%%%%%%%%%%%%%%%%%%%%%%%%%%%%%%%%%%%%%%%%%%%%%%%%%%%%%%%%%%%%%%%%%%%%%%%%%%%%%%

In this subsection, we show that the Heller component of $A=\mathcal{O}[X,Y]/(X^2,Y^2)$ contains a unique non-periodic component. We denote by $\mathcal{CH}_{\text{np}}$ the union of non-periodic components of $\mathcal{CH}_{A}$. 
The aim of this subsection is to show the following proposition.
\begin{proposition}\label{Heller2}
The following statements hold.
\begin{enumerate}
\item For any integer $m$, there exists an isomorphism $\tau(Z_{m})\simeq Z_{m-1}$. Thus, we obtain the following $\tau$-orbit:
$$\cdots\quad
\begin{xy}
(-20,0) *[o]+{Z_{-2}}="E",(-3,0) *[o]+{Z_{-1}}="A",(15,0)*[o]+{Z_{0}}="B",
(30,0)*[o]+{Z_1}="C",(45,0) *[o]+{Z_2}="D",(57,0) *{}="F",(-33,0) *{}="O",
\ar @{-->}_{\tau}"B";"A"
\ar @{-->}_{\tau}"C";"B"
\ar @{-->}_{\tau}"D";"C"
\ar @{-->}_{\tau}"A";"E"
\ar @{-->}_{\tau}"E";"O"
\ar @{-->}_{\tau}"F";"D"
\end{xy}\quad \cdots
$$
In particular, $\mathcal{CH}_{\text{np}}\neq\varnothing$.
\item For any $n\in\mathbb{Z}_{\geq 1}$ and $\lambda\in\mathbb{P}^1(\kappa)$, there is an isomorphism $\tau Z^{\lambda}_n\simeq Z^{-\lambda}_n$, where we understand $-\infty=\infty$. In particular, $\mathcal{CH}_\text{np}$ consists of the unique component containing $Z_0$.
\item For any $m\in\mathbb{Z}$, the Heller lattice $Z_m$ appears on the boundary in $\mathcal{CH}_\text{np}$.
\end{enumerate}
\end{proposition}

First, we prove that the indecomposable Heller lattice $Z_m$ is not periodic in $\mathcal{CH}_{A}$. In order to do this, we introduce another $\mathcal{O}$-basis of $Z_m$ for each $m\leq 0$ as follows;
\begin{align*} 
Z_m&= \mathcal{O}\varepsilon e_1 \oplus \mathcal{O}(Xe_1-Ye_2) \oplus \mathcal{O} Ye_1\oplus \mathcal{O} XYe_1 \\
& \oplus \mathcal{O}\varepsilon e_2 \oplus \mathcal{O}(Xe_2-Ye_3) \oplus \mathcal{O} \varepsilon Ye_2 \oplus \mathcal{O} XYe_2 \\
& \oplus \cdots \\
& \oplus \mathcal{O}\varepsilon e_{|m|} \oplus \mathcal{O}(Xe_{|m|}-Ye_{|m|+1}) \oplus \mathcal{O} \varepsilon Ye_{|m|}\oplus \mathcal{O} XYe_{|m|} \\
& \oplus \mathcal{O}\varepsilon e_{|m|+1} \oplus \mathcal{O}Xe_{|m|+1} \oplus \mathcal{O} \varepsilon Ye_{|m|+1}\oplus \mathcal{O} XYe_{|m|+1}.
\end{align*}
We denote by $\overline{\mathbb{B}}(m)$ this $\mathcal{O}$-basis of $Z_m$.

\para \textbf{Proof of (1) in Proposition \ref{Heller2}.} We compute $\tau Z_m$ in the following five cases.
\[ (\mathrm{a})\ m=1,\quad (\mathrm{b})\ m>1,\quad  (\mathrm{c})\ m=0,\quad (\mathrm{d})\ m=-1,\quad (\mathrm{e})\ m<-1. \]

Suppose $(\mathrm{a})$. Since the projective cover of $Z_1$ is given by
\[ \pi_1 :   A\oplus A  \longrightarrow Z_1,\quad e_1  \longmapsto  \varepsilon e_1, \quad e_2  \longmapsto  XY e_1, \]
we have
\[ \tau Z_1=\mathcal{O}(-XYe_1+\varepsilon e_2)\oplus \mathcal{O}Xe_2\oplus \mathcal{O}Ye_2\oplus \mathcal{O}XYe_2\simeq Z_0.\] 

Suppose $(\mathrm{b})$. Since the projective cover of $Z_m$ is given by
\[ \begin{array}{ccccl}
\pi _m & : & A^{\oplus 2m-1} & \longrightarrow & \quad Z_m \\
 & & e_i & \longmapsto & \left\{\begin{array}{ll}
                                                 \varepsilon e_{k} & \text{if $i=2k-1$, $k=1,2,3,\ldots , m-1,$} \\
                                                 Ye_{k-1}-Xe_{k} & \text{if $i=2k-2$, $k=1,2,3,\ldots ,m,$} \\
                                                 -\varepsilon e_n & \text{if $i=2m-1$,}\end{array}\right. \end{array} \]
we have
\begin{align*} 
\tau Z_m=& \bigoplus _{k=1}^{m-2}\bigg(\mathcal{O}(Ye_{2k-1}-Xe_{2k+1}-\varepsilon e_{2k}) \oplus \mathcal{O}(XYe_{2k-1}-\varepsilon Xe_{2k}) \\
&\oplus \mathcal{O}(-Xe_{2k+2}-Ye_{2k}) \oplus \mathcal{O}(-XYe_{2k}) \bigg)  \\
&\oplus \mathcal{O}(Ye_{2m-3}+Xe_{2m-1}-\varepsilon e_{2m-2}) \oplus \mathcal{O}(XYe_{2m-3}-\varepsilon Xe_{2m-2}) \\
&\oplus \mathcal{O}(XYe_{2m-1}-\varepsilon Ye_{2m-2}) \oplus \mathcal{O}(-XYe_{2m-2}). \end{align*}
We change the above $\mathcal{O}$-basis of $\tau Z_m$ by using the invertible matrix $P=(P_{i,j})\in\mathrm{M}_{4m}(\mathcal{O})$ defined by $P_{i,j}:=(-1)^i\delta_{i,j}I_4$. Then, the representing matrices of the actions of $X$ and $Y$ on $\tau Z_m$ with respect to the new ordered $\mathcal{O}$-basis coincide with those on $Z_{m-1}$. It follows that $\tau Z_m\simeq Z_{m-1}$. 

Suppose $(\mathrm{c})$. Since the projective cover of $Z_0$ is given by
\[\pi_0  :   A\oplus A \oplus A  \longrightarrow  Z_0,\quad  e_1  \longmapsto  \varepsilon e_1, \ e_2  \longmapsto  Xe_1, \ e_3 \longmapsto  Ye_1,\]
we have an isomorphism 
\begin{align*}
\tau Z_0=&\ \mathcal{O}(-Ye_1+\varepsilon e_3) \oplus \mathcal{O} (-XYe_1+\varepsilon Xe_3) \oplus \mathcal{O}Ye_3 \oplus \mathcal{O}XYe_3\\
&\oplus\mathcal{O}(-Xe_1+\varepsilon e_2) \oplus \mathcal{O} Xe_2 \oplus \mathcal{O}(Ye_2- Xe_3) \oplus \mathcal{O}XYe_2 \\
\simeq &\ Z_{-1}. \end{align*} 

Next, we consider the case (d) and (e). The projective cover of $Z_{m}$ $(m\leq -1)$ is given by
\[ \begin{array}{cccl}
\pi _m  : & A^{\oplus 2|m|+3} & \longrightarrow & \quad Z_m \\
 &  e_i & \longmapsto & \left\{\begin{array}{ll}
                                                 \varepsilon e_{k} & \text{if $i=2k-1$, $k=1,2,\ldots , |m|+1,$} \\
                                                 Ye_1 & \text{if $i=2$,} \\
                                                 Ye_{k}-Xe_{k-1} & \text{if $i=2k$, $k=1,2,\ldots ,|m|-1,$} \\
                                                 Xe_{|m|+1} & \text{if $i=2|m|+2$,} \\
                                                 Ye_{|m|+1}-Xe_{|m|}& \text{if $i=2|m|+3$.}\end{array}\right. \end{array} \]
Thus, an $\mathcal{O}$-basis of $\tau Z_m$ is given as follows. If $m=-1$, then
\begin{align*}
 \tau Z_{-1}=&\ \mathcal{O}(\varepsilon e_2-Ye_1)\oplus\mathcal{O}(Xe_2+Ye_5)\oplus\mathcal{O}Ye_2\oplus\mathcal{O}XYe_2 \\
 & \oplus\mathcal{O}(Ye_3-Xe_1-\varepsilon e_5)\oplus\mathcal{O}(-Ye_4+Xe_5)\\
 &\oplus\mathcal{O}(XYe_1+\varepsilon Ye_5)\oplus\mathcal{O}XYe_5 \\
 & \oplus\mathcal{O}(Xe_3-\varepsilon e_4)\oplus\mathcal{O}Xe_4 \oplus\mathcal{O}(XYe_3+\varepsilon Ye_4)\oplus\mathcal{O}XYe_4,\end{align*}
and if $m<-1$, then 
\begin{align*}
\tau Z_m=&\ \mathcal{O}(Ye_1-\varepsilon e_2) \oplus\mathcal{O}(Ye_4+Xe_2) \oplus\mathcal{O}Ye_2\oplus\mathcal{O}XYe_2 \\
&\oplus\bigoplus _{k=1} ^{|m|-2}\bigg( \mathcal{O}(Ye_{2k+1}-Xe_{2k-1}-\varepsilon e_{2k+2})  \oplus \mathcal{O}(Ye_{2k+4}+Xe_{2k+2})\\
&\oplus\mathcal{O}(XYe_{2k-1}+\varepsilon Ye_{2k+2}) \oplus\mathcal{O}XYe_{2k+2}\bigg) \\
&\oplus\mathcal{O}(Ye_{2|m|+1}-Xe_{2|m|-1}-\varepsilon e_{2|m|+3}) \oplus\mathcal{O}(-Ye_{2|m|+2}-Xe_{2|m|+3})\\
& \oplus \mathcal{O}(XYe_{2|m|-1}+\varepsilon Ye_{2|m|+3}) \oplus\mathcal{O}XYe_{2|m|+3} \\
&\oplus\mathcal{O}(Xe_{2|m|+1}-\varepsilon e_{2|m|+2}) \oplus\mathcal{O}Xe_{2|m|+2}\\
& \oplus \mathcal{O}(XYe_{2|m|+1}-\varepsilon Ye_{2|m|+2}) \oplus\mathcal{O}XYe_{2|m|+2}. \end{align*}
We now consider the case $(\mathrm{d})$. Let $\widetilde{P}$ be the $12\times 12$ matrix defined by
\[ \widetilde{P}=\left( \begin{array}{ccc}
I_4 & 0 & 0 \\
0 & P & 0 \\
0 & 0 & P \end{array}\right), \]
where $P=\mathrm{diag}(-1,1,-1,1)$. If we change the above $\mathcal{O}$-basis of $\tau Z_{-1}$ by using $\widetilde{P}$, then the representing matrices of the actions of $X$ and $Y$ on $\tau Z_{-1}$ coincide with those on $Z_{-2}$ with respect to the $\mathcal{O}$-basis $\overline{\mathbb{B}}({-2})$. Thus, we have $\tau Z_{-1}\simeq Z_{-2}$. 

In the case $(\mathrm{e})$, we introduce a new ordered $\mathcal{O}$-basis of $\tau Z_m$ by using the invertible matrix $P=(P_{i,j})\in\mathrm{M}_{4(m+1)}(\mathcal{O})$ defined by
\[ P_{i,j}:=\left\{\begin{array}{ll}
(-1)^{i+1}\delta_{i,j}\mathrm{diag}(-1,1,1,1) & \text{if $(i,j)\neq (m+1,m+1)$}, \\
(-1)^m\mathrm{diag}(-1,1,-1,1) & \text{if $(i,j)=(m+1,m+1).$} \end{array}\right. \]
Then, the representing matrices of the actions of $X$ and $Y$ on $\tau Z_{m}$ with respect to the new ordered $\mathcal{O}$-basis coincide with those on $Z_{m-1}$ with respect to the $\mathcal{O}$-basis $\overline{\mathbb{B}}({m-1})$.

\para \textbf{Proof of (2) in Proposition \ref{Heller2}.}

We show that all indecomposable direct summands of the Heller lattice $Z_n^{\lambda}$ belong to a periodic component of $\mathcal{CH}_{A}$. To simplify the notation, we use the following symbols. If $n>1$, then 
\[ \left(\begin{array}{cccc}
\mathsf{b_{1,1}} & \mathsf{b_{1,2}} & \mathsf{b_{1,3}} & \mathsf{b_{1,4}} \\
\mathsf{b_{2,1}} & \mathsf{b_{2,2}} & \mathsf{b_{2,3}} & \mathsf{b_{2,4}} \\
\vdots & \vdots & \vdots & \vdots \\
\mathsf{b_{n-1,1}} & \mathsf{b_{n-1,2}} & \mathsf{b_{n-1,3}} & \mathsf{b_{n-1,4}} \\
\mathsf{b_{n,1}} & \mathsf{b_{n,2}} & \mathsf{b_{n,3}} & \mathsf{b_{n,4}}
\end{array}\right):=
 \left(\begin{array}{cccc}
\varepsilon e_1 & Xe_1 & (Ye_1-Xe_2) & XY e_1 \\
\varepsilon e_2 &\varepsilon Xe_2 & (Ye_2-Xe_3) & XY e_2 \\
\vdots & \vdots & \vdots & \vdots \\
\varepsilon e_{n-1} &\varepsilon Xe_{n-1} & (Ye_{n-1}-Xe_n) & XY e_{n-1} \\
\varepsilon e_n &\varepsilon Xe_n & \varepsilon Ye_n & XY e_n 
\end{array}\right). \]
If $n=1$, then
\[ (\mathsf{b_{1,1}},\mathsf{b_{1,2}},\mathsf{b_{1,3}},\mathsf{b_{1,4}}):=(\varepsilon e_1, Xe_1, \varepsilon Ye_1, XYe_1). \]
The actions $X$ and $Y$ on $Z_{n}^{\infty}$ are given by the following. If $n>1$, then
\[ X\mathsf{b_{i,j}}=\left\{\begin{array}{ll}
\varepsilon \mathsf{b_{1,2}} & \text{if $\mathsf{i}=\mathsf{j}=\mathsf{1},$} \\
\mathsf{b_{i,2}} & \text{if $\mathsf{i}\neq \mathsf{1},\ \mathsf{j}=\mathsf{1},$} \\
\mathsf{b_{i,4}} & \text{if $\mathsf{i}\neq \mathsf{n},\ \mathsf{j}=\mathsf{3},$} \\
\varepsilon \mathsf{b_{n,4}} & \text{if $\mathsf{i}=\mathsf{n},\ \mathsf{j}=\mathsf{3},$} \\
0 & \text{otherwise,} \end{array}\right.
\quad
 Y\mathsf{b_{i,j}}=\left\{\begin{array}{ll}
\varepsilon \mathsf{b_{i,3}}+\mathsf{b_{i+1,2}} & \text{if $\mathsf{i}\neq \mathsf{n},\ \mathsf{j}=\mathsf{1},$} \\
\mathsf{b_{n,3}} & \text{if $\mathsf{i}= \mathsf{n},\ \mathsf{j}=\mathsf{1},$} \\
\mathsf{b_{1,4}} & \text{if $\mathsf{i}=\mathsf{1},\ \mathsf{j}=\mathsf{2},$} \\
\varepsilon \mathsf{b_{i,4}} & \text{if $\mathsf{i}\neq \mathsf{1},\ \mathsf{j}=\mathsf{2},$} \\
- \mathsf{b_{i+1,4}} & \text{if $\mathsf{i}\neq \mathsf{n},\ \mathsf{j}=\mathsf{3},$} \\
0 & \text{otherwise.} 
\end{array}\right.\]
If $n=1$, then
\[ X\mathsf{b_{1,j}}=\left\{\begin{array}{ll}
\varepsilon \mathsf{b_{1,j+1}} & \text{if $\mathsf{j=1,3}$,}\\
0 & \text{otherwise,}
\end{array}\right.\quad
Y\mathsf{b_{1,j}}=\left\{\begin{array}{ll}
\mathsf{b_{1,j+2}} & \text{if $\mathsf{j=1,2}$,} \\
0 & \text{otherwise.} \end{array}\right. \]

The statement can be shown by using similar arguments to those in the proof of (1) in Proposition \ref{Heller2}. First, we prove the $\lambda=\infty$ case. The projective cover of $Z_n^{\infty}$ in $\mathsf{latt}$-$A$ is given by
\[ \begin{array}{ccccl}
\pi _{n,\infty} & : & A^{\oplus 2n} & \longrightarrow & \quad Z_n^{\infty} \\
 & & e_i & \longmapsto & \left\{\begin{array}{ll}
                                                 \mathsf{b_{1,1}}& \text{if $i=1,$} \\
                                                 \mathsf{b_{1,2}} & \text{if $i=2$,} \\
                                                 \mathsf{b_{k,3}} & \text{if $i=2\mathsf{k}+1$, $\mathsf{k=1,2,\ldots ,n-1},$} \\
                                                 \mathsf{b_{k,1}}& \text{if $i=2\mathsf{k}$, $\mathsf{k=2,3,\ldots ,n}.$} \\
                                                 \end{array}\right. \end{array} \]
Then, we have isomorphisms 
\[ \begin{array}{rcl}
\mathrm{Ker}(\pi_{1,\infty}) & \hspace{-3mm} = & \hspace{-3mm} \mathcal{O}(-Xe_{1}+\varepsilon e_{2}) \oplus \mathcal{O}Xe_{2} \oplus\mathcal{O}(-XYe_{1}+\varepsilon Ye_{2})\oplus\mathcal{O}XYe_{2} \\
 & \hspace{-3mm} \simeq &  \hspace{-3mm} Z_{1}^{\infty}, \\
\mathrm{Ker}(\pi_{2,\infty}) & \hspace{-3mm}  = & \hspace{-3mm}  \mathcal{O}(-Xe_{1}+\varepsilon e_{2}) \oplus \mathcal{O}Xe_{2}\oplus\mathcal{O}(-Xe_{3}+ Ye_{2})\oplus\mathcal{O}XYe_{2} \\
&& \hspace{-4mm} \oplus \mathcal{O}(-Ye_1+Xe_{4}+\varepsilon e_{3}) \oplus \mathcal{O}(-XYe_{1}+\varepsilon Xe_3) \oplus\mathcal{O}(XYe_{4}+ \varepsilon Ye_{3}) \hspace{-0.5mm}\oplus \hspace{-0.5mm}\mathcal{O}XYe_{3} \\
& \hspace{-3mm} \simeq &  \hspace{-3mm} Z_{2}^{\infty}.\end{array} \]  

Suppose that $n\geq 3$. Then, an $\mathcal{O}$-basis of the kernel of $\pi_{n,{\infty}}$ is given by
\begin{align*}
\mathrm{Ker}(\pi_{n,\infty})=&\ \mathcal{O}(Xe_1-\varepsilon e_2) \oplus\mathcal{O}Xe_2 \oplus\mathcal{O}(Ye_2-Xe_3)\oplus\mathcal{O}XYe_2 \\
&\oplus\mathcal{O}(Ye_1-Xe_4-\varepsilon e_3) \oplus\mathcal{O}(XYe_1-\varepsilon Xe_3) \oplus\mathcal{O}(Ye_3+Xe_5)\oplus\mathcal{O}XYe_3 \\
&\oplus\bigoplus _{k=2} ^{n-2}\bigg( \mathcal{O}(Ye_{2k}-Xe_{2(k+1)}-\varepsilon e_{2k+1}) \oplus \mathcal{O}(XYe_{2k}-\varepsilon Xe_{2k+1})\\
&\oplus\mathcal{O}(Ye_{2k+1}+Xe_{2k+3}) \oplus\mathcal{O}XYe_{2k+1}\bigg) \\
&\oplus\mathcal{O}(Ye_{2(n-1)}-Xe_{2n}-\varepsilon e_{2n-1}) \oplus\mathcal{O}(XYe_{2(n-1)}-\varepsilon Xe_{2n-1})\\
&\oplus \mathcal{O}(XYe_{2n}+\varepsilon Ye_{2n-1}) \oplus\mathcal{O}XYe_{2n-1}. \\
 \end{align*}    
Let $P=(P_{i,j})\in\mathrm{Mat}(\mathcal{O},4(n+1),4(n+1))$ be the invertible matrix defined by
\[ P_{i,j}:=\left\{\begin{array}{ll}
\mathrm{diag}(-1,1,1,1) & \text{if $(i,j)=(1,1)$}, \\
\delta_{i,j}(-1)^i\mathrm{diag}(-1,-1,1,1) & \text{otherwise}.\end{array}\right. \]
By changing the $\mathcal{O}$-basis of $\tau Z_n^{\infty}$ by $P$, we have an isomorphism $Z_{n}^{\infty}\simeq \mathrm{Ker}(\pi_{n,\infty})$.

Next, we prove the $\lambda\neq\infty$ case. The projective cover of $Z_n^{\lambda}$ is given by
\[ \begin{array}{ccccl}
\pi _{n,\lambda} & : & A^{\oplus 2n} & \longrightarrow & \quad Z_n^{\lambda} \\
 & & e_i & \longmapsto & \left\{\begin{array}{ll}
                                                 \varepsilon e_{k} & \text{if $i=2k-1$, $k=1,2,\ldots n$,} \\
                                                 Ye_1-\lambda Xe_1 & \text{if $i=2$,} \\
                                                 Ye_{k}-\lambda Xe_{k}-Xe_{k-1} & \text{if $i=2k$, $k=2,3,\ldots ,n$,} \\
                                              \end{array}\right. \end{array} \]
and hence an $\mathcal{O}$-basis of the kernel of $\pi _{n,{\lambda}}$ is given by 
\begin{align*}
 &\mathcal{O}(\varepsilon e_2-Ye_1+\lambda Xe_1) \oplus \mathcal{O} (\varepsilon Xe_2-XYe_1) \oplus \mathcal{O} (Ye_2+\lambda Xe_2) \oplus \mathcal{O}XY e_2 \\
&\oplus \mathcal{O}(\varepsilon e_4-Ye_3+\lambda Xe_3+Xe_1) \oplus \mathcal{O} (\varepsilon Xe_4-XYe_3) \oplus \mathcal{O} (Ye_4+\lambda Xe_4+Xe_2) \oplus \mathcal{O}XY e_4 \\
&\oplus \cdots \\
&\oplus \mathcal{O}(\varepsilon e_{2n}-Ye_{2n-1}+\lambda Xe_{2n-1}+Xe_{2n-3}) \oplus \mathcal{O} (\varepsilon Xe_{2n}-XYe_{2n-1})\\
& \oplus \mathcal{O} (Ye_{2n}+\lambda Xe_{2n}+Xe_{2n-2}) \oplus \mathcal{O}XY e_{2n}.\end{align*}  
Let $P$ be the invertible matrix $P=(P_{i,j})\in\mathrm{Mat}(\mathcal{O},4(n+1),4(n+1))$ defined by $P_{i,j}:=\delta_{i,j}(-1)^{i+1}I_4$. By similar arguments as before, we obtain an isomorphism $Z_n^{-\lambda}\simeq \mathrm{Ker}(\pi_{n,\lambda})$. This finishes the proof.

\para
The statements (1) and (2) in Proposition \ref{Heller2}  imply that $\mathcal{CH}_\text{np}$ consists of the unique component containing $Z_0$, and it contains the Heller lattice $Z_n$ for all $n\in\mathbb{Z}$. In this paper, we call $\mathcal{CH}_\text{np}$ the \textit{non-periodic Heller component} of $A$.

Finally, we prove that non-periodic Heller lattices appear on the boundary in $\mathcal{CH}_\text{np}$. In other words, for each Heller lattice $Z_n$, the middle term of $\mathscr{E}(Z_n)$ is indecomposable as an object of the projectively stable category $\underline{\mathsf{latt}}$-$A:=\mathsf{latt}\text{-}A/\mathsf{proj}\text{-}A$, where $\mathsf{proj}$-$A$ is the full subcategory of $\mathsf{latt}$-$A$ consisting of finitely generated projective $A$-modules. Since the Auslander--Reiten translation $\tau$ induces an automorphism of $\mathcal{CH}_\text{np}$, it is sufficient to consider the case of $n=1$.  Let $\mathcal{B}_2:=\{e_l,Xe_l,Ye_l, XYe_l\}_{l=1,2}$ be the $\mathcal{O}$-basis of $A\oplus A$. We fix the $\mathcal{O}$-bases $\mathcal{B}_2$  and $\mathbb{B}(1)$.  Recall that the projective cover of $Z_1$ is given by
\[ \pi _1:A\oplus A \longrightarrow Z_1,\quad e_1\longmapsto \varepsilon e,\quad e_2\longmapsto XYe. \]
Then, the representing matrix of $\pi_1$ is
\[ \left(\begin{array}{cccccccc}
1&0&0&0&0&0&0&0 \\
0&1&0&0&0&0&0&0 \\
0&0&1&0&0&0&0&0 \\
0&0&0&\varepsilon&1&0&0&0
\end{array}\right). \]
Let $\psi \in\Hom_{A}(Z_1,A\oplus A)$, and we write
\begin{align*}
\psi(\varepsilon e)=& \sum_{i=1}^{2}(a_{i1}e_i+a_{i2}Xe_i+a_{i3}Ye_i+a_{i4}XYe_i ),\\
\psi(XYe)= & \sum_{i=1}^{2}(b_{i1}e_i+b_{i2}Xe_i+b_{i3}Ye_i+b_{i4}XYe_i). \end{align*}
 Since $\varepsilon\psi(XYe)=XY\psi(\varepsilon e)$, the representing matrix of $\psi$  is
\[ \left(\begin{array}{cccc}
\varepsilon b_{14}&0&0&0 \\
a_{12}&\varepsilon b_{14}&0&0 \\
a_{13}&0&\varepsilon b_{14}&0 \\
a_{14}&a_{13}&a_{12}&b_{14} \\
\varepsilon b_{24}&0&0&0 \\
a_{22}&\varepsilon b_{24}&0&0 \\
a_{23}&0&\varepsilon b_{24}&0 \\
a_{24}&a_{23}&a_{22}&b_{24} 
\end{array}\right). \]
Thus, the set of endomorphisms of $Z_1$ factorizing through $\pi_1$ is
\[ \left\{\left. \left(\begin{array}{cccc}
\varepsilon \alpha&0&0&0 \\
\beta&\varepsilon \alpha&0&0 \\
\gamma&0&\varepsilon \alpha&0 \\
\varepsilon \delta&\varepsilon \gamma&\varepsilon \beta&\varepsilon \alpha \end{array}\right)\ \right|\ \ \alpha , \beta, \gamma, \delta \in\mathcal{O} \right\}. \]
On the other hand, the radical of the endomorphism ring of $Z_1$ is given by 
\[ \mathrm{radEnd}_A(Z_1)=\left\{ \left.\left(\begin{array}{cccc}
\varepsilon a&0&0&0 \\
b&\varepsilon a&0&0 \\
c&0&\varepsilon a&0 \\
d&\varepsilon c&\varepsilon b&\varepsilon a \end{array}\right)\ \right|\ a,b,c,d\in\mathcal{O}\right\}. \]
Therefore, we may take an endomorphism $\varphi$ which satisfies conditions $(\mathrm{i})$ and $(\mathrm{iii})$ in Proposition \ref{AKM} as
\[ \varphi =\left(\begin{array}{cccc}
0&0&0&0 \\
0&0&0&0 \\
0&0&0&0 \\
1&0&0&0 \end{array}\right), \]
and we consider the pullback diagram along $\pi_1$ and $\varphi$:
$$
\begin{xy}
(-20,0) *[o]+{0}="O",(0,0) *[o]+{Z_0}="A",(20,0)*[o]+{A\oplus A}="B",
(40,0)*[o]+{Z_1}="C",(40,15) *[o]+{Z_1}="D",(60,0) *[o]+{0}="O1",
(20,15)*[o]+{\overline{E}_1}="E1",(0,15) *[o]+{Z_0}="Z0",(60,15) *[o]+{0}="O2",
(-20,15) *[o]+{0}="O3"
\ar "O";"A"
\ar "A";"B"
\ar "B";"C"_{\pi_1}
\ar "D";"C"^{\varphi}
\ar "C";"O1"
\ar "O3";"Z0"
\ar "Z0";"E1"
\ar "E1";"D"
\ar "D";"O2"
\ar "E1";"B"
\ar @{-}@<0.5mm>"Z0";"A"
\ar @{-}@<-0.5mm>"Z0";"A"
\end{xy}$$
By Proposition \ref{AKM}, the upper short exact sequence is isomorphic to $\mathscr{E}(Z_1)$. Then, an $\mathcal{O}$-basis of $\overline{E}_1$ is given by
\begin{align*}
\overline{E}_1=&\{(f_1,f_2,x)\in A\oplus A\oplus Z_1 |\ \pi_1(f_1,f_2)=\varphi(x)\} \\
      =&\ \mathcal{O}(e_2+\varepsilon e_3)\oplus\mathcal{O}Xe_2\oplus\mathcal{O}Ye_2 \oplus\mathcal{O}XYe_2\\
      &\oplus\mathcal{O}(XYe_1+\varepsilon ^2e_3)\oplus\mathcal{O}(\varepsilon Xe_3)\oplus\mathcal{O}(\varepsilon Ye_3)\oplus\mathcal{O}(XYe_3). 
       \end{align*}
       
 \para \textbf{Proof of (3) in Proposition \ref{Heller2}.} Since $\tau Z_{n}=Z_{n-1}$ and $\tau$ is an autofunctor on the stable module category $\underline{\latt}$-$A$, it is enough to show that the $A$-lattice $\overline{E}_1$ has exactly one non-projective  indecomposable direct summand. We have isomorphisms       
\begin{align*} \overline{E}_1 \simeq &\ \mathcal{O}(e_2+\varepsilon e_3)\oplus\mathcal{O}(Xe_2+\varepsilon Xe_3)\oplus\mathcal{O}(Ye_2+\varepsilon Ye_3) \oplus\mathcal{O}(XYe_2+\varepsilon XYe_3)\\
      &\oplus\mathcal{O}(\varepsilon ^2e_3)\oplus\mathcal{O}(\varepsilon Xe_3)\oplus\mathcal{O}(\varepsilon Ye_3)\oplus\mathcal{O}(XYe_3) \\
\simeq &\ A \oplus\mathcal{O}\varepsilon^2e_3 \oplus\mathcal{O}\varepsilon Xe_3\oplus\mathcal{O}\varepsilon Ye_3\oplus\mathcal{O}XYe_3.\end{align*}
Let $E_1=\mathcal{O}\varepsilon^2e_3 \oplus\mathcal{O}\varepsilon Xe_3\oplus\mathcal{O}\varepsilon Ye_3\oplus\mathcal{O}XYe_3.$ Then, $E_1$ is not isomorphic to $A$. Since $E_1\otimes \mathcal{K}\simeq A\otimes \mathcal{K}$, $E_1$ is an indecomposable $A$-lattice, and we complete the proof of our claim.

\subsection{The middle term of AR sequences ending at non-periodic Heller lattices}

In this subsection, we show the following proposition.
\begin{proposition}\label{Z_n} Let $\mathscr{E}(Z_m): 0\to Z_{m-1}\to \overline{E}_m \to Z_m\to 0$ be the almost split sequence ending at $Z_m$. Then, the following statements hold.
\begin{enumerate}[(1)]
\item For $m\in\mathbb{Z}$, $\overline{E}_m$ is an indecomposable object in $\underline{\latt}$-$A$.
\item For $m\leq 0$, we have an isomorphism $\overline{E}_m\otimes\kappa\simeq M(m-1)^{\oplus 4}$ in $\latt$-$A$. 
\item For $m\leq 0$, $\overline{E}_m$ is a non-projective indecomposable $A$-lattice.
\end{enumerate}
\end{proposition} 

Let $X$ be an $A$-lattice and $\pi:P\to X$ the projective cover.
Let $Q\otimes\kappa\to X\otimes\kappa$ be the projective cover. Then $\mathrm{rank}\ Q\leq \mathrm{rank}\ P$. On the other hand, it lifts to $Q\to X$ and it is an epimorphism by Nakayama's lemma. Thus, we have $\mathrm{rank}\ Q=\mathrm{rank}\ P$ and $P\otimes\kappa$ is the projective cover of $X\otimes\kappa$.  Therefore, we have $\tau(X)\otimes\kappa\simeq \Omega(X\otimes\kappa)$ as objects in the stable module category $\underline{\mathsf{mod}}$-$A\otimes\kappa$, where $\Omega$ is the syzygy functor.

\begin{lemma}\label{proj1} For all $n\in\mathbb{Z}$, there is an isomorphism
\[ \Omega(M(n))\simeq M(n-1)\quad \text{in}\quad \underline{\mathsf{mod}}\text{-}A\otimes\kappa. \]
\end{lemma}
\begin{proof}  
Since $A\otimes\kappa$ is symmetric, the functor $\Omega:\underline{\mathsf{mod}}$-$A\otimes\kappa\to\underline{\mathsf{mod}}$-$A\otimes\kappa$ is an autofunctor. Let $\Omega^{-1}$ be the quasi-inverse of $\Omega$. Note that Remark \ref{ARseq} implies that there are isomorphisms
\[ \Omega ^2(M(l))\simeq M(l-2)\quad \text{in}\quad \underline{\mathsf{mod}}\text{-}A\otimes\kappa\]
for any $l$  since $A\otimes\kappa$ is symmetric.

First, we show that $\Omega(M(n))\simeq M(n-1)$ in $\underline{\mathsf{mod}}\text{-}A\otimes\kappa$ for $n\leq 0$ by induction on $n$. It is clear for $n=0$. Assume that the statement holds for $n\leq k\leq 0$. The induction hypothesis $\Omega(M(n))\simeq\Omega(n-1)$ implies
\[ \Omega(M(n-1))\simeq \Omega^2(M(n)) \simeq M(n-2) \quad \text{in}\quad \underline{\mathsf{mod}}\text{-}A\otimes\kappa, \]
and the statement is true for $n-1$.

Now, we show that $\Omega^{-1}(M(n))\simeq M(n+1)$ in $\underline{\mathsf{mod}}\text{-}A\otimes\kappa$ for $n\geq 0$ by induction on $n$. It is easy to check that $\Omega(M(1))\simeq M(0)$. Thus, the statement is true for $n=0$. Assume that the statement holds for $1\leq k\leq n$. The induction hypothesis $\Omega^{-1}(M(n))\simeq M(n+1)$ implies
\[ \Omega^{-1}(M(n+1))\simeq \Omega^{-2}(M(n))\simeq M(n+2) \quad \text{in}\quad \underline{\mathsf{mod}}\text{-}A\otimes\kappa, \]
and the statement is true for $n+1$.
\end{proof}

\para \textbf{Proof of Proposition \ref{Z_n}.}
(1) It is a direct consequence from the statement (3) in Proposition \ref{Heller2}. 

We show the statements (2) and (3) by induction. 
Since an $\mathcal{O}$-basis of $E_1$ is given by
\[ E_1=\mathcal{O}\varepsilon ^2e_1\oplus \mathcal{O}\varepsilon Xe_1\oplus\mathcal{O}\varepsilon Ye_1\oplus \mathcal{O}XYe_1,\]
the projective cover of $E_1$ is 
\[ \pi ^{E_1} :A^{\oplus 4}\to E_1,\quad e_1\mapsto \varepsilon^2 e_1,\quad e_2\mapsto \varepsilon Xe_1,\quad e_3\mapsto \varepsilon Ye_1,\quad e_4\mapsto XYe_1, \]
and an $\mathcal{O}$-basis of $\tau E_1$ is given by
\begin{align*} 
 \tau E_1 =&\ \mathcal{O}(Xe_1-\varepsilon e_2)\oplus \mathcal{O} Xe_2\oplus \mathcal{O} (Ye_2-\varepsilon e_4)\oplus \mathcal{O} XYe_2\\
& \oplus \mathcal{O} (Ye_1-\varepsilon e_3)\oplus \mathcal{O} (Xe_3-\varepsilon e_4)\oplus \mathcal{O} Ye_3\oplus \mathcal{O} XYe_3 \\
&\oplus \mathcal{O} (XYe_1-\varepsilon ^2e_4)\oplus \mathcal{O} Xe_4\oplus \mathcal{O} Ye_4\oplus \mathcal{O} XYe_4.
\end{align*}
Applying the functor $-\otimes \kappa$ to $\tau E_1$, we have an isomorphism
\begin{align*} 
\tau E_1\otimes\kappa &=
\left(\begin{xy}
(0,0)*[o]+{Ye_1-\varepsilon e_3}="u1",
(8,0)*[o]+{}="dammy1",
(30,0)*[o]+{XYe_1-\varepsilon ^2 e_4}="v1",
(19,0)*[o]+{}="dammy2",
%(20,8)*[o]+{0}="v0",
(0,-8)*[o]+{Xe_1-\varepsilon e_2}="up",
(8,-8)*[o]+{}="dammy3",
%(20,-8)*[o]+{0}="vp",
%\ar "dammy3";"vp"
\ar @{-->}"dammy3";"dammy2"
\ar "dammy1";"dammy2"
%\ar @{-->}"dammy1";"v0"
\end{xy}\right)
\oplus
\left(\begin{xy}
(0,0)*[o]+{Ye_2-\varepsilon e_4}="u1",
(8,0)*[o]+{}="dammy1",
(24,0)*[o]+{XYe_2}="v1",
(19,0)*[o]+{}="dammy2",
%(20,8)*[o]+{0}="v0",
(0,-8)*[o]+{Xe_2}="up",
(8,-8)*[o]+{}="dammy3",
%(20,-8)*[o]+{0}="vp",
%\ar "up";"vp"
\ar @{-->}"up";"dammy2"
\ar "dammy1";"dammy2"
%\ar @{-->}"dammy1";"v0"
\end{xy}\right) \\
&\quad \oplus \left(\begin{xy}
(0,0)*[o]+{Ye_3}="u1",
(8,0)*[o]+{}="dammy1",
(24,0)*[o]+{XYe_3}="v1",
(19,0)*[o]+{}="dammy2",
%(20,8)*[o]+{0}="v0",
(0,-8)*[o]+{Xe_3-\varepsilon e_4}="up",
(8,-8)*[o]+{}="dammy3",
%(20,-8)*[o]+{0}="vp",
%\ar "dammy3";"vp"
\ar @{-->}"up";"dammy2"
\ar "u1";"dammy2"
%\ar @{-->}"u1";"v0"
\end{xy}\right)\oplus 
\left(\begin{xy}
(0,0)*[o]+{Ye_4}="u1",
(20,0)*[o]+{XYe_4}="v1",
%(20,8)*[o]+{0}="v0",
(0,-8)*[o]+{Xe_4}="up",
%(20,-8)*[o]+{0}="vp",
%\ar "up";"vp"
\ar @{-->}"up";"v1"
\ar "u1";"v1"
%\ar @{-->}"u1";"v0"
\end{xy}\right) \\
& \simeq M(-1)^{\oplus 4}.
\end{align*}
On the other hand, the dimension of  $\overline{E}_0\otimes\kappa$ as $\kappa$-vector space is $12$ since the sequence
\[ 0\longrightarrow  Z_{-1} \otimes\kappa \longrightarrow \overline{E}_0\otimes\kappa \longrightarrow Z_0\otimes\kappa \longrightarrow 0\]
is exact. It implies that $\overline{E}_0$ has no projective direct summands. Thus, $\overline{E}_0$ is an indecomposable $A$-lattice such that it is isomorphic to $\tau E_1$.

Now, we assume that the statements (2) and (3) are true for $m+1\leq 0$.
By the induction hypothesis (3), $\tau \overline{E}_{m+1}$ is defined. Then, the statements (1) and (3) in Proposition \ref{Heller2}  imply that  there is an isomorphism $\overline{E}_m\simeq\tau\overline{E}_{m+1}$ in $\underline{\latt}$-$A$. By applying $-\otimes\kappa$ to the both sides, we have the following isomorphisms 
\[ \overline{E}_m\otimes \kappa \simeq \tau(\overline{E}_{m+1})\otimes\kappa \simeq \Omega(\overline{E}_{m+1}\otimes\kappa)\simeq \Omega(M(m)^{\oplus 4})\simeq M(m-1)^{\oplus 4}\]
in $\underline{\mathsf{mod}}$-$A\otimes\kappa$ from Lemma \ref{proj1} and the induction hypothesis (2). By comparing the non-projective direct summands of $\overline{E}_m\otimes \kappa$ and $M(m-1)^{\oplus 4}$, we have an isomorphism 
\[ \overline{E}_m\otimes \kappa \simeq M(m-1)^{\oplus 4}\oplus P \quad \text{in}\quad \mathsf{mod}\text{-}A\otimes\kappa,\]
where $P$ is a projective $A\otimes\kappa$-module.

On the other hand, since $\overline{E}_m$ is the middle term of $\mathscr{E}(Z_m)$, we have $\mathrm{rank}(\overline{E}_m)=-8m+12$, which equals to $\mathrm{dim}_\kappa(M(m-1)^{\oplus 4})$. Therefore, $P=0$, and the statements (2) and (3) are true for $m$.

\begin{corollary}\label{E otimes k}
Fix an integer $m$. Let $\overline{E}_m$ be the middle term of $\mathscr{E}(Z_m)$. Then, there is an isomorphism
\[ \overline{E}_m\otimes \kappa \simeq M(m-1)^{\oplus 4} \]
as objects in $\underline{\mathsf{mod}}$-$A\otimes\kappa$. 
\end{corollary}

\begin{proof}
By Proposition \ref{Z_n}, we may assume that $m$ is positive.
We show that  $\overline{E}_m\otimes \kappa \simeq M(m-1)^{\oplus 4}$ in $\underline{\mathsf{mod}}$-$A\otimes\kappa$ by induction. If $m=1$, then we have isomorphisms in $\underline{\mathsf{mod}}$-$A\otimes\kappa$
\[ M(0)^{\oplus 4} \simeq \Omega^{-1}(M(-1)^{\oplus 4})\simeq \Omega^{-1}(\overline{E}_0\otimes\kappa)\simeq \overline{E}_1\otimes\kappa, \]
where $\Omega^{-1}$ is the quasi-inverse of the autofunctor $\Omega$. Suppose that $\overline{E}_m\otimes \kappa \simeq M(m-1)^{\oplus 4}$ in $\underline{\mathsf{mod}}$-$A\otimes\kappa$. Then, we have 
\[ M(m)^{\oplus 4} \simeq \Omega^{-1}(M(m-1)^{\oplus 4})\simeq \Omega^{-1}(\overline{E}_m\otimes\kappa)\simeq \overline{E}_{m+1}\otimes\kappa \quad\text{in}\quad\underline{\mod}\text{-}A\otimes\kappa,  \]
which gives the desired conclusion.
\end{proof}

%%%%%%%%%%%%%%%%%%%%%%%%%%%%%%%%%%%%%%%%%%%%%%%%%%%%%%%%%%%%%%%%%%%%%%%%%%%%%%
\subsection{Excluding the possibility $B_{\infty}$, $C_{\infty}$ and  $D_{\infty}$}
%%%%%%%%%%%%%%%%%%%%%%%%%%%%%%%%%%%%%%%%%%%%%%%%%%%%%%%%%%%%%%%%%%%%%%%%%%%%%%
From now on, we denote by $E_n$ the unique non-projective indecomposable direct summand of $\overline{E}_n$. Let $\overline{F_n}$ be the middle term of $\mathscr{E}(E_n)$. The aim of this subsection is to show the following proposition.

\begin{proposition}\label{indecF}\label{F1}
For any $n\in\mathbb{Z}$, the non-projective indecomposable direct summands of $\overline{F}_n$ are $Z_{n-1}$ and an indecomposable $A$-lattice $F_n$. Moreover, for all $m$, neither  $Z_m$ nor $E_m$ are isomorphic to $F_n$.
\end{proposition}

It is enough to show the assertion for the case $\overline{F}_1$. By the proof of (3) in Proposition \ref{Heller2}, we have
\[ E_1=\mathcal{O}\varepsilon^2e \oplus\mathcal{O}\varepsilon Xe\oplus\mathcal{O}\varepsilon Ye\oplus\mathcal{O}XYe. \]
Since $\mathrm{dim}_\kappa(E_1\otimes\kappa)=4=\mathrm{dim}_\kappa(M(0)^{\oplus 4})$, we have an isomorphism $E_1\otimes\kappa\simeq M(0)^{\oplus 4}$ by Corollary \ref{E otimes k}.

We construct $\mathscr{E}(E_1)$. Let $\mathcal{B}_4:=\{e_l,Xe_l,Ye_l, XYe_l\}_{l=1,\ldots, 4}$ be the $\mathcal{O}$-basis of $A^{\oplus 4}$. We fix these $\mathcal{O}$-bases. Since the projective cover of $E_1$ is given by
\[ \pi^{E_1}:A^{\oplus 4}\longrightarrow E_1,\quad e_1\longmapsto \varepsilon ^2e,\ e_2\longmapsto \varepsilon Xe,\ e_3\longmapsto \varepsilon Ye,\ e_4\longmapsto  XYe,\]
the representing matrix of $\pi^{E_1}$ is 
\[ \left(\begin{array}{cccccccccccccccc}
1&0&0&0&0&0&0&0    &0&0&0&0&0&0&0&0  \\
0&\varepsilon&0&0&1&0&0&0 &0&0&0&0&0&0&0&0  \\
0&0&\varepsilon&0&0&0&0&0 &1&0&0&0&0&0&0&0 \\
0&0&0&\varepsilon^2&0&0&\varepsilon&0 &0&\varepsilon&0&0&1&0&0&0 
\end{array}\right). \]
On the other hand, 
the radical of $\End_A(E_1)$ is given by
\[ \mathrm{rad}\End_A(E_1)=\left\{ \left.\left(\begin{array}{cccc}
\varepsilon a&0&0&0 \\
b&\varepsilon a&0&0 \\
c&0&\varepsilon a&0 \\
d&\ c& b&\varepsilon a \end{array}\right)\ \right|\ a,b,c,d\in\mathcal{O}\right\}.\]
By similar arguments to 2.10 in the subsection 2.3, we obtain:

\begin{lemma}\label{factor}
Any endomorphism of $E_1$ which factors through $p$ is reprsented by
\[ \left(\begin{array}{cccc}
\varepsilon ^2a&0&0&0 \\
\varepsilon ^2b&\varepsilon ^2a&0&0 \\
\varepsilon ^2c&0&\varepsilon ^2a&0 \\
\varepsilon ^2d& \varepsilon^2c& \varepsilon^2b&\varepsilon ^2a \end{array}\right)\]
for some $a,b,c,d\in\mathcal{O}$. 
\end{lemma} 
\begin{proof}
The proof is straightforward.
\end{proof}

Let $\varphi :E_1\to E_1$ be the endomorphism defined by $\varphi(\varepsilon ^2e)=\varepsilon XYe$. Note that $\varphi(\varepsilon Xe)=\varphi(\varepsilon Ye)=\varphi(XYe)=0$. We consider the  pullback diagram along $\pi^{E_1}$ and $\varphi$:
\begin{equation}\label{a.s.s.2}
\begin{xy}
(-20,-7) *[o]+{0}="O",(0,-7) *[o]+{E_0}="A",(20,-7)*[o]+{A^{\oplus 4}}="B",
(40,-7)*[o]+{E_1}="C",(40,7) *[o]+{E_1}="D",(60,-7) *[o]+{0}="O1",
(20,7)*[o]+{\overline{F}_1}="E1",(0,7) *[o]+{E_0}="Z0",(60,7) *[o]+{0}="O2",
(-20,7) *[o]+{0}="O3"
\ar "O";"A"
\ar "A";"B"
\ar "B";"C"_{\pi^{E_1}}
\ar "D";"C"^{\varphi}
\ar "C";"O1"
\ar "O3";"Z0"
\ar "Z0";"E1"
\ar "E1";"D"
\ar "D";"O2"
\ar "E1";"B"
\ar @{-}@<0.5mm>"Z0";"A"
\ar @{-}@<-0.5mm>"Z0";"A"
\end{xy}
\end{equation}

\begin{lemma} The following statements hold.
\begin{enumerate}[(1)]
\item $\varphi$ does not factor through $\pi^{E_1}$.
\item For each $f\in\mathrm{rad}\mathrm{End}_A(E_1)$, $\varphi\circ f$ factors through $\pi^{E_1}$.
\end{enumerate} 
\end{lemma}
\begin{proof} 
(1) If $\varphi$ factors through $\pi^{E_1}$, then it contradicts with Lemma \ref{factor}. 

(2) Let $f\in\mathrm{rad}\mathrm{End}_{A}(E_{1})$. Assume that $f(\varepsilon^2 e)=\varepsilon a(\varepsilon^2e)+b(\varepsilon Xe)+c(\varepsilon Ye)+d(XYe)$ for some $a,b,c,d\in\mathcal{O}$.
Since $\varepsilon^2f(XYe)=XYf(\varepsilon^2 e)=\varepsilon^3aXYe$, we have $f(XYe)=\varepsilon aXYe$, and hence $\varphi\circ f(\varepsilon^{2}e)=\varepsilon ^{2}a(XYe)$. Define $\psi:E_{1}\to A^{{\oplus 4}}$ by
$\psi(\varepsilon^{2}e)=aXYe_{1}$. Then, it is easy to check $\varphi\circ f=\pi^{E_1}\circ\psi$. 
\end{proof}

By Proposition \ref{AKM}, the upper short exact sequence in (\ref{a.s.s.2}) is the almost split sequence ending at $E_1$. 

\para \textbf{Proof of Proposition \ref{indecF}.}
The $A$-lattice $\overline{F}_1$ is a direct sum of $F_1$ and $F_1'$, where  
\begin{align*}
F_1 = &\mathcal{O}(Xe_1-\varepsilon e_2)\oplus\mathcal{O}Xe_2\oplus \mathcal{O}(XYe_1-\varepsilon Ye_2)\oplus\mathcal{O}XYe_2 \\
 &\oplus\mathcal{O}(Ye_1-\varepsilon e_3)\oplus\mathcal{O}(Xe_3-Ye_2)\oplus\mathcal{O}Ye_3\oplus\mathcal{O}XYe_3 \\
 &\oplus\mathcal{O}(Xe_3+\varepsilon ^2e)\oplus\mathcal{O}\varepsilon Xe\oplus\mathcal{O}\varepsilon Ye\oplus\mathcal{O}XYe, \\
F_1' =&\mathcal{O}(\varepsilon e_4+\varepsilon ^2e)\oplus\mathcal{O}(Xe_4+\varepsilon Xe)\oplus\mathcal{O}(Ye_4+\varepsilon Ye)\oplus\mathcal{O}(XYe_4+\varepsilon XYe).\end{align*}
Obviously, the $A$-lattice $F_1'$ is isomorphic to the Heller lattice $Z_0$. We show that the $A$-lattice $F_1$ is indecomposable. The actions of $X$ and $Y$ on $F_1$ with respect to the above basis are given by the following matrices:
\[ X=\left(\begin{array}{cccccccccccc}
0 & 0 & 0 & 0                 &0 & 0& 0  & 0                &  &  &  &   \\
-\varepsilon & 0 & 0 & 0 & 0& 0 & 0 & 0 &  & &  &  \\
0 & 0 & 0 & 0                 & 1& 0  & 0 & 0            &    &  \hsymb{0} &  &    \\
0 & 0 & -\varepsilon & 0 &  0&-1 & 0 &  0               &  &  &  &                    \\

 &  &  &                        & 0 & 0 & 0 & 0 &  &  &  &    \\
 &  &  &        & -\varepsilon & 0 & 0 & 0 &  &  &  &      \\
 &  \hsymb{0}&  &        & 0 & 0 & 0 & 0 &  &\hsymb{0}  &  &     \\
 &  &  &                       & 0 & 0 & 1 & 0 &  &  &  &     \\

 &  &  &                &  &  &  &  & 0 & 0 & 0 & 0   \\
 &  & &  &             &  &  &  & \varepsilon  & 0 & 0 & 0   \\
 & \hsymb{0} &  & &&\hsymb{0}    &  &  & 0 & 0 & 0 & 0     \\
 &  & &                &  &  &  &  & 0 & 0 & \varepsilon & 0   \\

\end{array}\right)\]

\[  Y=\left(\begin{array}{cccccccccccc}
0 & 0 & 0 & 0                 &  &  &  &                 &  &  &  &    \\
0 & 0 & 0 & 0 &  &  &  &  &  & &  &   \\
1 & 0 & 0 & 0                 && \hsymb{0}   &  &    & & \hsymb{0}   &  & \\
0 & 1 & 0 & 0                 &  &  &  &                 &  &  &  &              \\

 &  &  &                        & 0 & 0 & 0 & 0 &  0&  0& 0 & 0   \\
 &  &  &                        & 0 & 0 & 0 & 0 & 0 & 0 & 0 & 0   \\
 &  \hsymb{0}&  &        & -\varepsilon & 0 & 0 & 0 & 0 & 0 & 0 & 0  \\
 &  &  &                       & 0 & 1 & 0 & 0 & 1 & 0 & 0 & 0 \\

 &  & &                &  &  &  &  & 0 & 0 & 0 & 0   \\
 &  & &                 &  &  &  &  & 0 & 0 & 0 & 0   \\
 &\hsymb{0}  & &  &  & \hsymb{0} &  &  & \varepsilon & 0 & 0 & 0  \\
 &  &  &                &  &  &  &  & 0 & \varepsilon & 0 & 0   \\
\end{array}\right)\]
Let $M=(x_{i,j})\in\End_A(F_1)$ be an idempotent. By the equalities $MX=XM$ and $MY=YM$, the idempotent $M$ is of the form $M=(M_1\ \  M_2)$,
where $M_1$ and $M_2$ are
\[ M_1=\left(\begin{array}{cccccc}
x_{1,1} & 0 & 0 & 0 & -\varepsilon x_{3,7} & 0  \\
x_{2,1} & x_{1,1} & 0 & 0 & -\varepsilon x_{4,7} & -\varepsilon x_{3,7}   \\
x_{3,1} & x_{3,2} & x_{1,1} & 0 & x_{3,5} & x_{3,6}  \\
x_{4,1} & x_{4,2} & x_{2,1} & x_{1,1} & x_{4,5} & x_{4,6}  \\

-\varepsilon x_{3,2} & 0 & 0 & 0 & x_{1,1}-\varepsilon x_{3,6}& 0 \\
x_{6,1} & -\varepsilon x_{3,2} & 0 & 0 & x_{6,5} & x_{1,1}-\varepsilon x_{3,6}   \\
-\varepsilon x_{8,2} & 0 & \varepsilon ^2x_{3,2} & 0 & x_{7,5}& \varepsilon x_{3,2}  \\
x_{8,1} & x_{8,2} & x_{8,3} & -\varepsilon x_{3,2} & x_{8,5} & x_{8,6} \\

x_{9,1} & 0 & 0 & 0 & x_{9,5} & 0    \\
x_{10,1} & -x_{9,1} & 0 & 0 & -x_{12,7} & -x_{9,5}   \\
x_{11,1} & 0 & \varepsilon x_{9,1} & 0 & x_{11,5} & x_{9,1} \\
x_{12,1} & -x_{11,1} & \varepsilon x_{10,1} & -\varepsilon x_{9,1}& x_{12,5} & x_{12,6} \end{array}
\right) \]
\[M_2=\left(\begin{array}{cccccc}
0 & 0 & \varepsilon x_{3,11} & 0 & 0 & 0 \\
0 & 0 & x_{2,9} & -\varepsilon x_{3,11} & 0 & 0 \\
 x_{3,7} & 0 & x_{3,9} & -x_{8,12} & x_{3,11} & 0 \\
 x_{4,7} & -\varepsilon x_{3,7} & x_{4,9} & x_{4,10} & x_{4,11} & -x_{3,11} \\
 0 & 0 & -\varepsilon x_{8,12} & 0 & 0 & 0\\
 0 & 0 & x_{6,9} & \varepsilon x_{8,12} & 0 & 0 \\
x_{1,1}-\varepsilon x_{3,6} & 0 & \varepsilon x_{8,10} & 0 & \varepsilon x_{8,12} & 0 \\
   x_{8,7}& x_{1,1}-\varepsilon x_{3,6} & x_{8,9} & x_{8,10} & x_{8,11} & x_{8,12}  \\
  0 & 0 & x_{9,9}& 0 &0 & 0 \\
0 & 0 & x_{10,9} &  x_{9,9} & 0 & 0 \\
 -x_{9,5} & 0 & x_{11,9} & 0 & x_{9,9} &  0  \\
x_{12,7} & -\varepsilon x_{9,5} & x_{12,9} & x_{11,9} & x_{12,11}&x_{9,9}
 \end{array}
\right) \]
such that 
\[ \left\{\begin{array}{l}
x_{2,9}+\varepsilon x_{3,7}-\varepsilon x_{4,11}  =0, \\
\varepsilon x_{3,1}-\varepsilon x_{4,2}+x_{6,1}  =0,\\
\varepsilon x_{3,9}+\varepsilon x_{4,10}+x_{6,9}=0, \\
x_{6,9}+x_{9,9} =x_{1,1}-\varepsilon x_{3,6}+\varepsilon x_{8,11}, \\
x_{6,1}-x_{8,3}+x_{9,1} =0, \end{array}\right.
\left\{\begin{array}{l}
x_{6,5}+\varepsilon x_{8,7}+ x_{9,5}  =0, \\
x_{7,5}- x_{8,3}+\varepsilon x_{8,6}  =0,\\
x_{9,5}+x_{10,9} -x_{12,11}=0, \\
-x_{10,1}+x_{11,5}+x_{12,6} =0. \end{array}\right.\]
Note that, it follows that we have $x_{6,9}\in\varepsilon\mathcal{O}$ and $x_{9,9}=x_{1,1}-\varepsilon f$ for some $f\in \mathcal{O}$. Since $M$ is an idempotent, the following equality holds:
\begin{equation}\label{x}
 x_{1,1}(1-x_{1,1})=\varepsilon x_{3,11}x_{9,1}+\varepsilon^2x_{3,2}x_{3,7}. \end{equation}

Assume that $x_{1,1}\equiv 0\ \mathsf{mod}\ \varepsilon\mathcal{O}$. By the assumption, the element $x_{9,9}$ belongs to $\varepsilon\mathcal{O}$. By comparing the $(9,1)$-entries and $(3,2)$-entries of $M$ and $M^2$, respectively, we have 
\begin{equation}\label{1} x_{9,1}  =x_{1,1}x_{9,1}+x_{9,1}x_{9,9}-\varepsilon x_{3,2}x_{9,5} \in \varepsilon\mathcal{O},
\end{equation}
\begin{equation}\label{3} 
x_{3,2}=x_{1,1}x_{3,2}+x_{1,1}x_{3,2}-\varepsilon x_{3,2}x_{3,6}+x_{9,1}x_{8,12}.
\end{equation}
It follows from $(\ref{1})$ and $(\ref{3})$ that the equality
\begin{equation}\label{4}
x_{3,2}(1-2x_{1,1}+\varepsilon x_{3,6}+\varepsilon x_{9,5}(1-x_{1,1}-x_{9,9})^{-1}x_{8,12})=0
\end{equation}
holds. Thus, the elements $x_{3,2}$ and $x_{9,1}$ are zero, and hence $x_{1,1}=0$. Let $\overline{M}$ be $M\ \mathsf{mod}\ \varepsilon \mathcal{O}$. As $M^2=M$, it suffices to show that $\overline{M}$ is the zero matrix to conclude that $M$ itself is the zero matrix. Let $e_i$ $(1\leq i\leq 12)$ be standard row vectors. Then, the span of $e_1,e_5,e_9$ is stable by $\overline{M}$ and the representing matrix is nilpotent. Thus, $e_i\overline{M}=0$ holds for $i=1,5,9$. From the equalities  
\[ e_2\overline{M}=\overline{x_{2,1}}e_1+\overline{x_{2,9}}e_9,\quad e_6\overline{M}=\overline{x_{6,1}}e_1-\overline{x_{6,9}}e_9,\text{  and  } e_7\overline{M}=\overline{x_{7,5}}e_5, \]
we also obtain $e_i\overline{M}=0$ for $i=2,6,7$. Then a similar argument shows $e_i\overline{M}=0$ for $i=10,11$, and then for $i=3,12$, and finally for $i=4,8$.

Assume that  $x_{1,1}\equiv 1\ \mathsf{mod}\ \varepsilon\mathcal{O}$. Recall that $I_{12}$ is the identity matrix of size $12$. Then, $I_{12}-M$ is an idempotent whose $(1,1)$-entry is zero modulo $\varepsilon \mathcal{O}$, and $M=I_{12}$ follows.

On the other hand, the induced sequence
\[ 0\longrightarrow E_0 \otimes\kappa \longrightarrow \overline{F}_1\otimes\kappa \longrightarrow E_1\otimes\kappa \longrightarrow 0 \]
splits by \cite[Proposition 4.5]{K}. Thus, there is an isomorphism
\[ \overline{F}_1\otimes\kappa \simeq M(0)^{\oplus 4}\oplus M(-1)^{\oplus 4}. \]
By Proposition \ref{Heller}, we have $F_1\otimes\kappa\simeq M(0)^{\oplus 3}\oplus M(-1)^{\oplus 3}$ as $F_1'\simeq Z_0$. It follows from Proposition \ref{Heller} and Corollary \ref{E otimes k} that  $F_1$ is neither isomorphic to $Z_m$ nor  $E_m$ for all $m$.
%%%%%%%%%%%%%%%%%%%%%%%%%%%%%%%%%%%%%%%%%%%%%%%%%%%%%%%%%%%%%%%%%%%%%%%%%%%%%%%
\section{The shape of the non-periodic Heller component}
%%%%%%%%%%%%%%%%%%%%%%%%%%%%%%%%%%%%%%%%%%%%%%%%%%%%%%%%%%%%%%%%%%%%%%%%%%%%%%%

We continue using the symbols of the previous section. In this section, we describe the shape of the non-periodic Heller component $\mathcal{CH}_\text{np}$.  Since the component $\mathcal{CH}_\text{np}$ has no loops by Lemma \ref{valuation_of_loops}, one can apply Theorem \ref{Ried} to $\mathcal{CH}_\text{np}$. Thus, there exist a directed tree $T$ and an admissible group $G$ such that $\mathcal{CH}_\text{np}\simeq \mathbb{Z}T/G$. 
We determine the directed tree $T$ and the admissible group $G$ in the rest of the paper.
The final result is given in Section \ref{main}. The aim of this section is to give the candidates for $T$ and to show $G$ is trivial.

%%%%%%%%%%%%%%%%%%%%%%%%%%%%%%%%%%%%%%%
%%%%%%%%%%%%%%%%%%%%%%%%%%%%%%%%%%%%%%%%%
\subsection{Non zero subadditive function on the component}
%%%%%%%%%%%%%%%%%%%%%%%%%%%%%%%%%%%%%%%%% 

In this subsection, we show that the admissible group $G$ is trivial.  By Theorem \ref{Z}, if $\mathcal{CH}_\text{np}$ admits non-zero subadditive function $f:(\mathcal{CH}_\text{np})_0\to \mathbb{Z}_{\geq 0}$, then there is an isomorphism $\mathcal{CH}_\text{np}\simeq \mathbb{Z}\Delta$ for some valued quiver $\Delta$ since $\mathcal{CH}_\text{np}$ is not smooth. On the other hand, Theorem \ref{Ried} implies that the tree class is uniquely determined by $\mathcal{CH}_\text{np}$ and $G$ is unique up to conjugation. Thus, we have  $T=\Delta$ and $G$ is trivial if $\mathcal{CH}_\text{np}$ admits a non-zero subadditive function.  

Now, we introduce two functions.
Let $X$ be an indecomposable $A$-lattice. Define two functions $D$ and $\mathcal{R}$ by 
\[ D(X):=\sharp\{\text{non-projective indecomposable direct summands of $X\otimes\kappa$}\}, \]
\[ \mathcal{R}(X):=\text{the rank of $X$ as an $\mathcal{O}$-module}. \]
Let $X=X_1\oplus\cdots\oplus X_l$, where  $X_1,\ldots ,X_l$ are indecomposable $A$-lattices. Then, we also define 
\[ D(X):=\sum_{i=1}^{l}D(X_i),\quad \text{and} \quad \mathcal{R}(X):=\sum_{i=1}^l \mathcal{R}(X_i).\]

We denote by $\mathsf{add}(\mathcal{CH}_\text{np})$ the set of finite direct sums of $A$-lattices which belong to $\mathcal{CH}_\text{np}$.
Our goal of this subsection is to show the following proposition.

\begin{proposition}\label{add}
Let $D$ and $\mathcal{R}$ be the functions given as above. Then, the following statements hold.
\begin{enumerate}[(1)]
\item $D(Z_m)=2$ and $D(E_m)=4$ for any $m\in\mathbb{Z}$.
\item $D$ is additive on $\mathcal{CH}_\text{np}$ with $D=D\circ\tau$. In particular, the admissible group $G$ is trivial. 
\item Let $M\in\mathsf{add}(\mathcal{CH}_\text{np})$ with $D(M)=2$.  Then, $M$ is indecomposable.
\item The values of $D$ on $\mathsf{add}(\mathcal{CH}_\text{np})$ are even. 
\item The function $\mathcal{R}$ is additive along exact sequences. 
\end{enumerate}
\end{proposition}

\begin{lemma}\label{stable of D}
Let $\mathcal{C}$ be a component of the stable Auslander--Reiten quiver for $\mathsf{latt}^{(\natural)}$-$\Lambda$, where $\Lambda$ is a symmetric $\mathcal{O}$-order, and $D:\mathcal{C}_0\to \mathbb{Z}_{\geq 0}$ the function defined as above. Then, the equality $D(X)=D(\tau X)$ holds.
\end{lemma} 
\begin{proof}
Let $X\in\mathcal{C}$ and $\pi :P\to X$ the projective cover of $X$. Then, $\pi\otimes\kappa:P\otimes\kappa\to X\otimes\kappa$ is the projective cover by Nakayama's lemma. 

Since the functor $-\otimes \kappa$ is exact on lattices, we obtain an isomorphism $\tau X\otimes \kappa\simeq \Omega(X\otimes \kappa)$, where $\Omega$ is the first syzygy over $\mod$-$\Lambda\otimes\kappa$. It implies that the number of non-projective indecomposable direct summands of $\tau X\otimes \kappa$ equals to the number of non-projective indecomposable direct summands of $\Omega(X\otimes \kappa)$. As $\Omega$ is an autofunctor of the stable module category, $D(X)=D(\tau X)$ follows.
\end{proof}

\begin{lemma}\label{additive of D}
Let $\Lambda$ be a symmetric $\mathcal{O}$-order and $D$ the function defined as above. If a short exact sequence $0\to \tau L\to E\to L\to 0$ in $\mathsf{latt}^{(\natural)}$-$\Lambda$ is the almost split sequence ending at $L$, then the equality
\[ D(L)+D(\tau L)=D(E) \]
holds whenever $L$ is not isomorphic to any direct summand of the Heller lattices. 
\end{lemma}
\begin{proof} Let $L$ be an indecomposable $\Lambda$-lattice such that $L\otimes\mathcal{K}$ is projective as an $\Lambda\otimes\mathcal{K}$-module. Suppose that $L$ is not isomorphic to Heller lattices. By \cite[Proposition 4.5]{K}, the induced exact sequence 
\[ 0\rightarrow \tau L\otimes\kappa \rightarrow E\otimes\kappa \rightarrow L\otimes\kappa\rightarrow 0 \]  
splits, which gives the desired conclusion. \end{proof}

\begin{lemma}\label{non-zero}
Let $\mathcal{C}$ be a component of the stable Auslander--Reiten quiver for $\mathsf{latt}^{(\natural)}$-$\Lambda$, where $\Lambda$ is a symmetric $\mathcal{O}$-order, $X$ a vertex of $\mathcal{C}$. Then, $D(X)$ is a positive integer.
\end{lemma}
\begin{proof}
Suppose that $D(X)=0$. Let $P\to X$ be the projective cover of $X$. Then, $P\otimes\kappa$ is the projective cover and $X\otimes\kappa$ is projective as an $A\otimes\kappa$-module, we have $\tau X\otimes\kappa = 0$, a contradiction.
\end{proof}

\para \textbf{Proof of Proposition \ref{add}.} (1)  This is a direct consequence of Propositions \ref{Heller} and \ref{Z_n}.

(2) By Lemma \ref{additive of D}, it is enough to consider the case $X\simeq Z_n$ for some $n$. Since $\mathscr{E}(Z_n)$ is of the form
\[ 0 \longrightarrow Z_{n-1} \longrightarrow E_{n} \longrightarrow Z_n \longrightarrow 0,\]
we have $2D(Z_n)=D(E_n)$ by (1). Therefore, $D$ is additive with $D=D\circ\tau$.

(3) It is sufficient to show the case that $M$ is not isomorphic to any Heller lattice. Suppose that $M=M_1\oplus M_2$  for some non-zero $A$-lattices $M_{1}$ and $M_{2}$. Since $M\in\mathsf{add}(\mathcal{CH}_\text{np})$, the direct summands $M_1$ and $M_2$ are not projective. Thus, we have $D(M_1)=D(M_2)=1$ by Lemma \ref{non-zero}, and the $A$-lattices $M_1$ and $M_2$ are vertices of $\mathcal{CH}_\text{np}$. Then, it follows from Propositions \ref{Heller}, \ref{Z_n} and the proof of Lemma \ref{additive of D} that $M_i\otimes\kappa$ is isomorphic to $M(n_i)$ for some $n_i\in\mathbb{Z}$, a contradiction with Lemma \ref{divide}.

(4) Let $U\in\mathsf{add}(\mathcal{CH}_\text{np})$. In (3), we have proved that there exists an isomorphism 
\[ U\otimes \kappa\simeq \bigoplus_{j\in J}M(j)^{\oplus n_j}, \]
where $J$ is a finite set of $\mathbb{Z}$. This implies that the dimension of $U\otimes\kappa$ is the sum of finite odd numbers. Therefore, $D(U)$ is even by Lemma \ref{divide}.

(5) Obviously, the function $\mathcal{R}$ is additive for short exact sequences.

%%%%%%%%%%%%%%%%%%%%%%%%%%%%%%%%%%%%%%%%%
\subsection{Valencies of vertices in the component}
%%%%%%%%%%%%%%%%%%%%%%%%%%%%%%%%%%%%%%%%% 

In this subsection, we observe the number of arrows from each vertex in $\mathcal{CH}_\text{np}$. From Proposition \ref{Heller2}, the Heller lattice $Z_n$ appears on the boundary in $\mathcal{CH}_\text{np}$, and from Proposition \ref{indecF}, we have
\[ \sharp\{\text{arrows starting at $E_n$ in $\mathcal{CH}_\text{np}$}\}=\sharp\{\text{arrows ending at $E_n$ in $\mathcal{CH}_\text{np}$}\}=2 \]
for all $n\in \Z$. Thus, the component $\mathcal{CH}_\text{np}$ admits the following valued subquiver with trivial valuations:
$$
\begin{xy}
(0,0) *[o]+{Z_{-1}}="A",(20,0)*[o]+{Z_{0}}="B",
(40,0)*[o]+{Z_1}="C",(60,0) *[o]+{Z_2}="D",(70,0) *{}="E",(-20,0) *[o]+{Z_{-2}}="O",
(-10,10) *[o]+{E_{-1}}="e-1",(10,10)*[o]+{E_{0}}="e0",
(30,10)*[o]+{E_1}="e1",(50,10) *[o]+{E_2}="e2",(70,0) *{}="E",(-30,10) *[o]+{E_{-2}}="e-2",(60,10) *{}="OO",
(0,20) *[o]+{F_{0}}="f-1",(20,20)*[o]+{F_{1}}="f0",
(40,20)*[o]+{F_2}="f1",(70,20) *{}="ooo",(-20,20) *[o]+{F_{-1}}="f-2",(-40,10) *{}="OOO",
\ar @{..>}_{\tau}"B";"A"
\ar @{..>}_{\tau}"C";"B"
\ar @{..>}_{\tau}"D";"C"
\ar @{..>}_{\tau}"A";"O"
\ar @{..>}_{\tau}"e2";"e1"
\ar @{..>}_{\tau}"e1";"e0"
\ar @{..>}_{\tau}"e0";"e-1"
\ar @{..>}_{\tau}"e-1";"e-2"
\ar @{..}"OO";"e2"
\ar @{..}"OOO";"e-2"
\ar @{..>}_{\tau}"f1";"f0"
\ar @{..>}_{\tau}"f0";"f-1"
\ar @{..>}_{\tau}"f-1";"f-2"
\ar "O";"e-1"
\ar "e-1";"A"
\ar "A";"e0"
\ar "e0";"B"
\ar "B";"e1"
\ar "e1";"C"
\ar "C";"e2"
\ar "e2";"D"
\ar "e-2";"O"
\ar "O";"e-1"
\ar "e1";"f1"
\ar "A";"e0"
\ar "e-2";"f-2"
\ar "f-2";"e-1"
\ar "e-1";"f-1"
\ar "f-1";"e0"
\ar "e0";"f0"
\ar "f0";"e1"
\ar "f1";"e2"
\end{xy}$$

Given a vertex $X$of  $\mathcal{CH}_\text{np}$, we define a function $d$ on $\mathcal{CH}_\text{np}$ by
 \[ d(X):=\sharp\{\text{arrows from $X$ in $\mathcal{CH_\text{np}}$}\}. \]

In order to give candidates for the tree class $T$ of $\mathcal{CH}_\text{np}$, we introduce a pair of integers $(q(M), H(M))$ for $M\in \mathcal{CH}_\text{np}$ as follows. If $M$ is isomorphic to the Heller lattice $Z_n$, then $(q(M),H(M))=(1,n)$. Otherwise, we may choose $n$ such that a composition of irreducible morphisms $f_1\circ \cdots \circ f_k:Z_n\to M$ has the minimum length, and define $(q(M),H(M))=(k+1,n+k)$. For an $A$-lattice $M$, we also define the equilateral triangle $T(M)\subset \mathcal{CH}_\text{np}$ as follows:
\begin{itemize}
\item The vertices of $T(M)$ are $M$, $Z_n$ and $Z_{H(M)}$.
\item The edge $T(M)_1$ is a chain of irreducible morphisms from $Z_n$ to $M$.
\item The edge $T(M)_2$ is a chain of irreducible morphisms from $M$ to $Z_{H(M)}$.
\item The edge $T(M)_3$ is a chain of the Auslander--Reiten translation from $Z_{H(M)}$ to $Z_n$.
\end{itemize}
The set of vertices of $\mathcal{CH}_\text{np}$ is the disjoint union of the following three sets:
\begin{align*}
\mathcal{CH}_{\text{np}+}&=\{ X\in \mathcal{CH}_\text{np}\ |\ H(X)>0\},\\
\mathcal{CH}_{\text{np}0}&=\{ X\in \mathcal{CH}_\text{np}\ |\ H(X)=0\},\\
\mathcal{CH}_{\text{np}-}&=\{ X\in \mathcal{CH}_\text{np}\ |\ H(X)<0\}.
\end{align*}
$$
\begin{xy}
(0,0) *[o]+{Z_{-1}}="A",(20,0)*[o]+{Z_{0}}="B",
(40,0)*[o]+{Z_1}="C",(60,0) *[o]+{Z_2}="D",(70,0) *{}="E",(-20,0) *[o]+{Z_{-2}}="O",
(-10,10) *[o]+{E_{-1}}="e-1",(10,10)*[o]+{E_{0}}="e0",
(30,10)*[o]+{E_1}="e1",(50,10) *[o]+{E_2}="e2",(70,0) *{}="E",(-30,10) *[o]+{E_{-2}}="e-2",(60,10) *{\cdots}="OO",
(0,20) *[o]+{F_{0}}="f-1",(20,20)*[o]+{F_{1}}="f0",
(40,20)*[o]+{F_2}="f1",(70,20) *{}="ooo",(-20,20) *[o]+{F_{-1}}="f-2",(-40,10) *{\cdots}="OOO",
(20,-10)*[o]+{}="dot1",(-20,30)*[o]+{}="dot2",(40,-10)*[o]+{}="dot3",(0,30)*[o]+{}="dot4",
(-10,-5)*[o]+{\mathcal{CH}_{\text{np}-}}="c-",(25,-5)*[o]+{\mathcal{CH}_{\text{np}0}}="c0",(55,-5)*[o]+{\mathcal{CH}_{\text{np}+}}="c+",
\ar @{..>}"B";"A"
\ar @{..>}"C";"B"
\ar @{..>}"D";"C"
\ar @{..>}"A";"O"
\ar @{..>}"e2";"e1"
\ar @{..>}"e1";"e0"
\ar @{..>}"e0";"e-1"
\ar @{..>}"e-1";"e-2"
\ar @{..>}"f1";"f0"
\ar @{..>}"f0";"f-1"
\ar @{..>}"f-1";"f-2"
\ar "O";"e-1"
\ar "e-1";"A"
\ar "A";"e0"
\ar "e0";"B"
\ar "B";"e1"
\ar "e1";"C"
\ar "C";"e2"
\ar "e2";"D"
\ar "e-2";"O"
\ar "O";"e-1"
\ar "e1";"f1"
\ar "A";"e0"
\ar "e-2";"f-2"
\ar "f-2";"e-1"
\ar "e-1";"f-1"
\ar "f-1";"e0"
\ar "e0";"f0"
\ar "f0";"e1"
\ar "f1";"e2"
\ar @{-}"dot1";"dot2"
\ar @{-}"dot3";"dot4"
\end{xy}$$

From now on, we assume that $\mathcal{CH}_\text{np}\neq\Z A_{\infty}$. Then, there exists an $A$-lattice $X$ such that 
\begin{enumerate}[(i)]
\item the $A$-lattice $X$ is not isomorphic to $Z_m$ and $E_m$ for all $m$.
\item the triangle $T(X)$ is contained in $\mathcal{CH}_{\text{np}-}$,
\item the number of outgoing arrows is two for each $A$-lattices on the edge $T(X)_1$ except for $Z_{H(X)-q(X)+1}$ and $X$, and  the number of indecomposable direct summands of $E_X$ is not $2$, where $E_X$ is the middle term of $\mathscr{E}(X)$.
\item valuations of arrows in the triangle $T(X)$ is trivial.
\end{enumerate}
$$
\begin{xy}
(0,0) *[o]+{Z_{-1}}="z-1",(20,0)*[o]+{Z_{0}}="B",(-40,0) *[o]+{Z_{l}}="zl",(-20,8)*[o]+{T(X)}="Tx",
(10,10)*[o]+{E_{0}}="e0",(45,10) *{\cdots}="OO",(-20,20)*[o]+{X}="x",(-40,10) *{\cdots}="OOO",
(40,0)*[o]+{Z_1}="C",(30,10)*[o]+{E_1}="e1",
(20,-10)*[o]+{}="dot1",(-20,30)*[o]+{}="dot2",(40,-10)*[o]+{}="dot3",(0,30)*[o]+{}="dot4",
(-20,-5)*[o]+{\mathcal{CH}_\text{np$-$}}="c-",(25,-5)*[o]+{\mathcal{CH}_\text{np$0$}}="c0",(55,-5)*[o]+{\mathcal{CH}_\text{np$+$}}="c+",

\ar "e0";"B"
\ar "e1";"C"
\ar "z-1";"e0"
\ar "B";"e1"
\ar @{..>}"B";"z-1"
\ar @{..>}"C";"B"

\ar @{-}"x";"zl"
\ar @{-}"zl";"z-1"
\ar @{-}"z-1";"x"

\ar @{-}"dot1";"dot2"
\ar @{-}"dot3";"dot4"
\end{xy}$$

It follows from Proposition \ref{add} that $D(M)=2q(M)$ for any $M\in T(X)$. Using Proposition \ref{add} and results from Section $2$, we may assume that $q(X)\geq 3$ and $H(X)=-1$. We set $q(X)=q$. 

Assume that $\mathscr{E}(X)$ is given by
\[ \mathscr{E}(X):\quad 0\longrightarrow \tau X \longrightarrow \bigoplus _{i=1}^{p} W_{i} \longrightarrow X\longrightarrow 0, \]
where $W_p\in T(X)$. Then, the neighborhood of $X$ in $\mathcal{CH}_\text{np}$ is given as follows.
\begin{equation}\label{nbdX}
\begin{xy}
(-5,-20) *[o]+{\tau W_p}="A",(25,-20)*[o]+{W_p}="B",
(-20,-5) *[o]+{\tau^2 X}="e-1",(10,-5)*[o]+{\tau X}="e0",
(40,-5)*[o]+{X}="e1",
(-5,7) *[o]+{\tau W_2}="tw21",(25,7)*[o]+{W_2}="w21",
(-5,0) *[o]+{\vdots}="dot1",(25,0)*[o]+{\vdots}="dot2",
(-5,-12) *[o]+{\tau W_{p-1}}="tw2",(25,-12)*[o]+{W_{p-1}}="w2",
(-5,15) *[o]+{\tau W_1}="tw1",(25,15)*[o]+{W_1}="w1",
\ar @{..>}^{\tau}"B";"A"
\ar @{..>}^{\tau}"e1";"e0"
\ar @{..>}^{\tau}"e0";"e-1"
\ar @{..>}^{\tau}"w2";"tw2"
\ar @{..>}_{\tau}"w21";"tw21"
\ar @{..>}_{\tau}"w1";"tw1"
\ar "e-1";"A"
\ar "A";"e0"
\ar "e0";"B"
\ar "B";"e1"
\ar "A";"e0"
\ar "e-1";"tw1"
\ar "tw1";"e0"
\ar "e0";"w1"
\ar "w1";"e1"
\ar "e-1";"tw2"
\ar "tw2";"e0"
\ar "e-1";"tw21"
\ar "tw21";"e0"
\ar "e0";"w2"
\ar "w2";"e1"
\ar "e0";"w21"
\ar "w21";"e1"
\end{xy}
\end{equation}
Here, we allow the possibility that $W_i\simeq W_k$ for some $i\neq k$ instead of writing the valuation. If $D(W_i)=s_i$, then the values of $D$ of $(\ref{nbdX})$ are as follows:
\begin{equation}\label{nbdX2}
\begin{xy}
(-5,-20) *[o]+{2(q-1)}="A",(25,-20)*[o]+{2(q-1)}="B",
(-20,-5) *[o]+{2q}="e-1",(10,-5)*[o]+{2q}="e0",
(40,-5)*[o]+{2q}="e1",
(-5,7) *[o]+{s_2}="tw21",(25,7)*[o]+{s_2}="w21",
(-5,0) *[o]+{\vdots}="dot1",(25,0)*[o]+{\vdots}="dot2",
(-5,-12) *[o]+{s_{p-1}}="tw2",(25,-12)*[o]+{s_{p-1}}="w2",
(-5,15) *[o]+{s_1}="tw1",(25,15)*[o]+{s_1}="w1",
\ar @{..>}^{\tau}"B";"A"
\ar @{..>}^{\tau}"e1";"e0"
\ar @{..>}^{\tau}"e0";"e-1"
\ar @{..>}^{\tau}"w2";"tw2"
\ar @{..>}_{\tau}"w21";"tw21"
\ar @{..>}_{\tau}"w1";"tw1"
\ar "e-1";"A"
\ar "A";"e0"
\ar "e0";"B"
\ar "B";"e1"
\ar "A";"e0"
\ar "e-1";"tw1"
\ar "tw1";"e0"
\ar "e0";"w1"
\ar "w1";"e1"
\ar "e-1";"tw2"
\ar "tw2";"e0"
\ar "e-1";"tw21"
\ar "tw21";"e0"
\ar "e0";"w2"
\ar "w2";"e1"
\ar "e0";"w21"
\ar "w21";"e1"
\end{xy}
\end{equation}

\begin{lemma}\label{D}  The following statements hold:
\begin{enumerate}[(1)]
\item The sum of $s_1,s_2,\ldots s_{p-2}$ and $s_{p-1}$ is $2(q+1)$.
\item The inequality $s_i\geq q$ is satisfied for any $i$.
\end{enumerate}
\end{lemma}
\begin{proof} (1) By Lemma \ref{additive of D}, we have
\[ 4q=\sum_{i=1}^{p-1}D(W_i)+D(W_p)=\sum_{i=1}^{p-1}s_i+2(q-1). \]
It follows that (1) holds.

(2) From Proposition \ref{add} (2) and $(\ref{nbdX2})$, we obtain that $2s_i\geq 2q$.
\end{proof}

\begin{lemma}\label{W} Suppose that $q<\infty$. Then, $d(X)$ is precisely three.  
\end{lemma}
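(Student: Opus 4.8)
The plan is to pin down $d(X)=p$ from the numerical data already collected in Lemma~\ref{d'}, the standing hypothesis $q=q(X)\geq 3$ (a finite integer, since $q<\infty$), and the defining property (iii) of $X$, that the number of arrows from $X$ is not two. First I would establish the upper bound $p\leq 3$. Summing the inequalities $s_i+t_i\geq 2q$ of Lemma~\ref{d'}(3) over $i=1,\dots,p-1$ and substituting the equalities $\sum_{i=1}^{p-1}s_i=\sum_{i=1}^{p-1}t_i=2(q+1)$ of Lemma~\ref{d'}(1) gives
\[ 2q(p-1)\ \leq\ \sum_{i=1}^{p-1}(s_i+t_i)\ =\ 4(q+1), \]
whence $p-1\leq 2+\tfrac{2}{q}\leq\tfrac{8}{3}$; as $p-1$ is an integer, $p\leq 3$.

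Next I would eliminate $p=1$ and $p=2$. Since $\sum_{i=1}^{p-1}t_i=2(q+1)>0$ by Lemma~\ref{d'}(1), the index set $\{1,\dots,p-1\}$ is non-empty, so $p\geq 2$; alternatively, $p=1$ would force the middle term of $\mathscr{E}(X)$ to be the single lattice $W_p\in T(X)$, and then~(\ref{sub}) together with $d'(X)=d'(\tau X)=2q$ and $d'(W_p)=2(q-1)$, read off from~(\ref{q(M)}), would give $4q=2(q-1)$, which is impossible. Finally, if $p=2$ then $X$ has exactly two arrows emanating from it, contradicting property (iii). Hence $p=3$, i.e. $d(X)$ is precisely three.

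The argument is essentially a one-line counting estimate, so the step most likely to need care is not the estimate itself but the legitimacy of its inputs: that Lemma~\ref{d'} is applicable to $\mathscr{E}(X)$ and that $d'(X)=d'(\tau X)=2q$ with $d'(W_p)=2(q-1)$. The former relies on $X$ not being isomorphic to a summand of a Heller lattice — so that Proposition~\ref{split} and hence the relation~(\ref{sub}) hold — which is guaranteed here: $X\not\cong Z_n$ because $q(X)=q\geq 3>1=q(Z_n)$, and $X$ is not a summand of any $Z_p^{\lambda}$ because those lie in $\tau$-periodic components (Proposition~\ref{notuse}) while $\mathcal{C}$ is not $\tau$-periodic. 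The latter is exactly~(\ref{q(M)}) applied to $X$ and to $W_p\in T(X)$, invoking Proposition~\ref{Ensummand}. All of this is in force under the standing assumptions on $X$, so I anticipate no obstacle beyond such bookkeeping.
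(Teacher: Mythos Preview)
Your proof is correct and follows the paper's approach: the same summation of Lemma~\ref{d'}(1) and (3) gives $2q(p-1)\le 4(q+1)$, hence $p\le 3$, and the cases $p=1,2$ are eliminated. The only point of divergence is the elimination of $p=2$. The paper computes $s_1=t_1=2(q+1)$ and appeals to ``maximality of $q$''; you instead invoke condition~(iii) directly. Your version is cleaner, but note that it tacitly assumes that $d(X)=2$ yields two \emph{distinct} outgoing arrows rather than one arrow with multiplicity~$2$; this amounts to $W_1\not\cong W_p$, which follows immediately from $d'(W_1)=t_1=2(q+1)\neq 2(q-1)=d'(W_p)$ via Lemma~\ref{d'}(1) and~(\ref{q(M)}). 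Once that is made explicit, the two arguments coincide.
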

\begin{proof}  Lemma \ref{D} implies that
\[ 2(q+1)=\sum_{i=1}^{p-1}s_i\geq (p-1)q. \]
Thus, the inequality $-2\leq q(3-p)$ holds. Since $p$ and $q$ are positive, we have $p=1,2,3$. If $p=1$, then $q=-1$ from Lemma \ref{D} (1), a contradiction. If $p=2$, then $s_1=2(q+1)$, which contradicts with the maximality of $q$ namely, the condition (iii). Therefore, we have $p=3$. Then, we may assume that $\mathscr{E}(X)$ is of the form
\[ 0\longrightarrow \tau X\longrightarrow  W_1\oplus  W_2\oplus  Y \longrightarrow X\longrightarrow 0 \]
with $Y\in T(X)$. We show that the three non-projective indecomposable $A$-lattices $W_1$, $W_2$ and $Y$ are pairwise non-isomorphic.

Suppose that $Y\simeq W_i$ for some $i$. Since $Y\in T(X)$, there exist arrows in $T(X)$ such that their valuations are not trivial, a contradiction. 

Suppose that $W_1\simeq W_2$. Then, the neighborhood of $X$ in $\mathcal{CH}_\text{np}$ is the following valued quiver:
$$
\begin{xy}
(0,0) *[o]+{\tau Y}="tY",(30,0)*[o]+{Y}="Y",
(-15,10) *[o]+{\tau^2 X}="e-1",(15,10)*[o]+{\tau X}="e0",
(45,10)*[o]+{X}="e1",
(0,20) *[o]+{\tau W_1}="tw1",(30,20)*[o]+{W_1}="w1",
\ar @{..>}_{\tau}"Y";"tY"
\ar @{..>}^{\tau}"e1";"e0"
\ar @{..>}_{\tau}"w1";"tw1"
\ar @{..>}^{\tau}"e0";"e-1"
\ar "e-1";"tY"
\ar "tY";"e0"
\ar "e0";"Y"
\ar "Y";"e1"
\ar "tY";"e0"
\ar "e-1";"tw1"|{(1,2)}
\ar "tw1";"e0"|{(2,1)}
\ar "e0";"w1"|{(1,2)}
\ar "w1";"e1"|{(2,1)}
\end{xy}$$
Indeed, if we write the value $W_1 \xrightarrow{(a,b)} X$, then clearly $a=2$ by the assumption. Thus, $\mathscr{E}(X)$ becomes
\[ 0\longrightarrow \tau X\longrightarrow W_1^{\oplus 2}\oplus Y \longrightarrow X\longrightarrow 0 \]
and we have $D(W_1)=q+1$ from Lemma \ref{additive of D}. Suppose that the almost split sequence ending at $W_1$ is 
\[ \mathscr{E}(W_1):\quad 0\longrightarrow  \tau W_1\longrightarrow \tau X^{\oplus b}\oplus  U_1 \longrightarrow W_1\longrightarrow 0, \]
where $U_1$ is an $A$-lattice. If $U_1=0$, then Lemma \ref{additive of D} implies that
\[ q+1= D(W_1) =qb, \]
hence $q(b-1)=1$, which contradicts with $q\geq 3$. Thus, $U_1\neq 0$ and $q(b-1)<1$. Since $b\geq 1$, we have $b=1$.

From the almost split sequence $\mathscr{E}(W_1)$, we have $D(U_1)=2$, and it implies that $U_1$ is indecomposable by Proposition \ref{add}. 
Therefore, we have $q=3$ from the inequality
\[ 4=D(U_1)+D(\tau U_1) \geq D(\tau W_1) =q+1. \]
Note that $\mathcal{CH}_\text{np}$ is the following valued stable translation quiver.
\begin{equation}\label{w2}
\begin{xy}
(0,-30) *[o]+{Z_{-1}}="A",(20,-30)*[o]+{Z_{0}}="B",
(40,-30)*[o]+{Z_1}="C",(60,-30) *[o]+{Z_2}="D",(70,-30) *{}="E",(-20,-30) *[o]+{Z_{-2}}="O",
(-10,-15) *[o]+{E_{-1}}="e-1",(10,-15)*[o]+{E_{0}}="e0",
(30,-15)*[o]+{E_1}="e1",(50,-15) *[o]+{E_2}="e2",(70,0) *{}="E",(-30,-15) *{E_{-2}}="e-2",(60,0) *{\cdots\cdots}="OO",
(0,0) *[o]+{F_{0}}="f-1",(20,0)*[o]+{F_{1}}="f0",
(40,0)*[o]+{F_2}="f1",(70,0) *{}="ooo",(-20,0) *[o]+{F_{-1}}="f-2",(-40,0) *{\cdots\cdots}="OOO",
(-10,15) *[o]+{\tau W_{1}}="w0",(10,15)*[o]+{W_{1}}="w1",
(30,15)*[o]+{\tau^{-1}W_1}="w2",(50,15) *[o]+{\tau ^{-2}W_1}="w3",(-30,15) *{\tau ^2W_{1}}="w-1",
(0,30) *[o]+{U_{1}}="u1",(20,30)*[o]+{U_2}="u2",
(40,30)*[o]+{U_3}="u3",(-20,30) *[o]+{U_0}="u0"
\ar @{..>}_{\tau}"B";"A"
\ar @{..>}_{\tau}"C";"B"
\ar @{..>}_{\tau}"D";"C"
\ar @{..>}_{\tau}"A";"O"
\ar @{..>}_{\tau}"e2";"e1"
\ar @{..>}_{\tau}"e1";"e0"
\ar @{..>}_{\tau}"e0";"e-1"
\ar @{..>}_{\tau}"e-1";"e-2"
\ar @{..>}_{\tau}"w3";"w2"
\ar @{..>}_{\tau}"w2";"w1"
\ar @{..>}_{\tau}"w1";"w0"
\ar @{..>}_{\tau}"w0";"w-1"
\ar @{..>}_{\tau}"u3";"u2"
\ar @{..>}_{\tau}"u2";"u1"
\ar @{..>}_{\tau}"u1";"u0"
\ar @{..>}^{\tau}"f1";"f0"
\ar @{..>}^{\tau}"f0";"f-1"
\ar @{..>}^{\tau}"f-1";"f-2"
\ar "O";"e-1"
\ar "e-1";"A"
\ar "A";"e0"
\ar "e0";"B"
\ar "B";"e1"
\ar "e1";"C"
\ar "C";"e2"
\ar "e2";"D"
\ar "e-2";"O"
\ar "O";"e-1"
\ar "e1";"f1"
\ar "A";"e0"
\ar "e-2";"f-2"
\ar "f-2";"e-1"
\ar "e-1";"f-1"
\ar "f-1";"e0"
\ar "e0";"f0"
\ar "f0";"e1"
\ar "f1";"e2"
\ar "w-1";"f-2"|{(2,1)}
\ar "f-2";"w0"|{(1,2)}
\ar "w0";"f-1"|{(2,1)}
\ar "f-1";"w1"|{(1,2)}
\ar "w1";"f0"|{(2,1)}
\ar "f0";"w2"|{(1,2)}
\ar "w2";"f1"|{(2,1)}
\ar "f1";"w3"|{(1,2)}
\ar "w-1";"u0"
\ar "u0";"w0"
\ar "w0";"u1"
\ar "u1";"w1"
\ar "w1";"u2"
\ar "u2";"w2"
\ar "w2";"u3"
\ar "u3";"w3"
\end{xy}
\end{equation}
It follows from Propositions \ref{Heller}, \ref{Z_n} and the proof of Lemma \ref{additive of D} that there is an isomorphism
\[ \tau W_1^{\oplus 2}\otimes \kappa \simeq M(-3)^{\oplus 3}\oplus M(-2)^{\oplus 2} \oplus M(-1)^{\oplus 3}, \]
a contradiction.  
\end{proof}

%%%%%%%%%%%%%%%%%%%%%%%%%%%%%%%%%%%%%%%%%%%%%%%%%%%%%%%%%%%%%%%%%%%%
\section{Main results}\label{main}
%%%%%%%%%%%%%%%%%%%%%%%%%%%%%%%%%%%%%%%%%%%%%%%%%%%%%%%%%%%%%%%%%%%%

In this section, we continue using the symbols and the assumption of the previous section. 
By Proposition \ref{Z_n}, $\mathcal{CH}_{\text{np}}$ is not smooth. From the results in Subsection 3.1, the function $D$ is subadditive with $D=D\circ \tau$ on $\mathcal{CH}_{\text{np}}$. Therefore, there exists a directed tree $T$ such that $\mathcal{CH}_{\text{np}}=\Z T$.
Since $D$ is additive with $D(X)=D(\tau X)$ for all $X\in\mathcal{CH}_\text{np}$, it satisfies
\[ 2D(X) = \sum_{Y\to X \text{ in }T}d_{YX}D(Y)+\sum_{X\to Y\text{ in }T}d'_{XY}D(Y)\quad X\in T. \] 
Thus, the tree class $\overline{T}$ of $\mathcal{CH}_\text{np}$ is one of infinite Dynkin diagrams or Euclidean diagrams from Theorem \ref{tree class}. 

\begin{lemma}\label{additive R}
Suppose that $X\in\mathcal{CH}_\text{np}$ is not isomorphic to $Z_n$ for all $n$. Then, the middle term of $\mathscr{E}(X)$ has no projective modules as direct summands.
\end{lemma}
\begin{proof}  By the proof of (3) in Proposition \ref{add}, $X\otimes \kappa$ and $\tau X\otimes\kappa$ have no projective modules as direct summands. Since $X$ is not a Heller lattice, the induced exact sequence $\mathscr{E}(X)\otimes\kappa: 0\to \tau X\otimes\kappa\to E_X\otimes\kappa\to X\otimes\kappa\to 0$ splits.
\end{proof}

Now, the following theorem can be proved.

\begin{maintheorem}  Let $\mathcal{O}$ be a complete discrete valuation ring, $A=\mathcal{O}[X,Y]/(X^2,Y^2)$ and $\Gamma_s(A)$ the stable Auslander--Reiten quiver for $\mathsf{latt}^{(\natural)}$-$A$. Assume that the residue field $\kappa$ is algebraically closed. Then, the following statements hold.
\begin{enumerate}[(1)]
\item  Let $M$ be an indecomposable $A\otimes_\mathcal{O}\kappa$-module. Then the Heller lattice of $M$ lies on a non-periodic component of $\Gamma_s(A)$ if and only if $M$ is given by a string path of even length.
\item  $\Gamma_s(A)$ contains a unique connected non-periodic Heller component $\mathcal{CH}_\text{np}$.
\item The component $\mathcal{CH}_\text{np}$ is isomorphic to $\mathbb{Z}A_{\infty}$. 
\item Every non-periodic indecomposable Heller lattice appears on the boundary of the component $\mathcal{CH}_\text{np}$.
\end{enumerate}
\end{maintheorem}

\begin{proof}
The statements (1), (2) and (4) had been proved in Proposition \ref{Heller2}. We only need to show the statement (3). Assume that $\overline{T}\neq A_{\infty}$. It implies from Propositions \ref{Heller2} and \ref{indecF} that $\overline{T}$ is one of $\widetilde{E_6}$, $\widetilde{E}_7$, $\widetilde{E}_8$, $\widetilde{F}_{41}$ or $\widetilde{F}_{42}$. On the other hand, Lemma \ref{W} implies that $\overline{T}$ is neither $\widetilde{F}_{41}$ nor $\widetilde{F}_{42}$.

First, we suppose that $\mathcal{CH}_\text{np}=\Z\widetilde{E}_6$. Then, $\mathcal{CH}_\text{np}$ has the following subquiver with bounds $U_n$ and $V_n$:
\begin{equation}\label{main1}
\begin{xy}
(-15,-20) *[o]+{Z_{-1}}="A",(15,-20)*[o]+{Z_{0}}="B",
(45,-20)*[o]+{Z_1}="C",(-45,-20)*[o]+{Z_{-2}}="z-2",
(-30,-10) *[o]+{E_{-1}}="e-1",(0,-10)*[o]+{E_{0}}="e0",
(30,-10)*[o]+{E_1}="e1",
(-15,0) *[o]+{F_{0}}="f-1",(15,0)*[o]+{F_{1}}="f0",
(-45,0) *[o]+{F_{-1}}="f-2",(45,0) *[o]+{F_{2}}="f2",
(-30,10) *[o]+{W_{0}}="w0",(0,10)*[o]+{W_{1}}="w1",(30,10)*[o]+{W_{2}}="w-2",
(-30,0) *[o]+{W_{0}'}="ww1",(0,0)*[o]+{W_{1}'}="ww0",(30,0)*[o]+{W_{2}'}="ww-2",
(-15,20) *[o]+{U_{1}}="u1",(15,20) *[o]+{U_{2}}="u2",
(-15,10) *[o]+{V_1}="v0",(15,10) *[o]+{V_{2}}="v1",
(-60,0)*[o]+{\cdots}="dot1",(60,0)*[o]+{\cdots}="dot2",

\ar "e-1";"A"
\ar "A";"e0"
\ar "e0";"B"
\ar "B";"e1"
\ar "e1";"C"
\ar "A";"e0"
\ar "f-2";"e-1"
\ar "e-1";"f-1"
\ar "f-1";"e0"
\ar "e0";"f0"
\ar "f0";"e1"
\ar "z-2";"e-1"
\ar "f-2";"w0"
\ar "w0";"f-1"
\ar "f-1";"w1"
\ar "w1";"f0"
\ar "w0";"u1"
\ar "u1";"w1"

\ar "f-2";"ww1"
\ar "ww1";"f-1"
\ar "f-1";"ww0"
\ar "ww0";"f0"
\ar "ww1";"v0"
\ar "v0";"ww0"

\ar "ww0";"v1"
\ar "v1";"ww-2"
\ar "f0";"w-2"
\ar "f0";"ww-2"
\ar "w-2";"f2"
\ar "ww-2";"f2"

\ar "w1";"u2"
\ar "u2";"w-2"
\ar "e1";"f2"
\end{xy}
\end{equation}
By writing the ranks as $\mathcal{O}$-modules of vertices in $(\ref{main1})$, we obtain:
\begin{equation}\label{main2}
\begin{xy}
(-15,-20) *[o]+{8}="A",(15,-20)*[o]+{4}="B",
(45,-20)*[o]+{4}="C",(-45,-20)*[o]+{12}="z-2",
(-30,-10) *[o]+{20}="e-1",(0,-10)*[o]+{12}="e0",
(30,-10)*[o]+{4}="e1",
(-15,0) *[o]+{24}="f-1",(15,0)*[o]+{12}="f0",
(-45,0) *[o]+{36}="f-2",(45,0) *[o]+{\gamma}="f2",
(-30,10) *[o]+{x}="w0",(0,10)*[o]+{x'}="w1",(30,10)*[o]+{x''}="w-2",
(-30,0) *[o]+{y}="ww1",(0,0)*[o]+{y'}="ww0",(30,0)*[o]+{y''}="ww-2",
(-15,20) *[o]+{\alpha}="u1",(15,20) *[o]+{\alpha'}="u2",
(-15,10) *[o]+{\beta}="v0",(15,10) *[o]+{\beta'}="v1",(-60,0)*[o]+{\cdots}="dot1",(60,0)*[o]+{\cdots}="dot2",

\ar "e-1";"A"
\ar "A";"e0"
\ar "e0";"B"
\ar "B";"e1"
\ar "e1";"C"
\ar "A";"e0"
\ar "f-2";"e-1"
\ar "e-1";"f-1"
\ar "f-1";"e0"
\ar "e0";"f0"
\ar "f0";"e1"
\ar "z-2";"e-1"
\ar "f-2";"w0"
\ar "w0";"f-1"
\ar "f-1";"w1"
\ar "w1";"f0"
\ar "w0";"u1"
\ar "u1";"w1"

\ar "f-2";"ww1"
\ar "ww1";"f-1"
\ar "f-1";"ww0"
\ar "ww0";"f0"
\ar "ww1";"v0"
\ar "v0";"ww0"

\ar "ww0";"v1"
\ar "v1";"ww-2"
\ar "f0";"w-2"
\ar "f0";"ww-2"
\ar "w-2";"f2"
\ar "ww-2";"f2"

\ar "w1";"u2"
\ar "u2";"w-2"
\ar "e1";"f2"
\end{xy}
\end{equation}
Thus, we have the following system of linear equations:
\[ \left\{\begin{array}{lclc}
\beta +\beta' & = & y' &\cdots\cdots\cdots (1)\\
\alpha +\alpha' & = & x' &\cdots\cdots\cdots (2)\\
x +y & = & 40 &\cdots\cdots\cdots (3)\\
x' +y' & = & 24&\cdots\cdots\cdots (4) \end{array}\right. 
 \quad \left\{\begin{array}{lclc}
x +x' & = & 24+\alpha &\cdots\cdots\cdots (5)\\
y +y' & = & 24+\beta &\cdots\cdots\cdots (6)\\
x' +x'' & = & 12+\alpha' &\cdots\cdots\cdots (7)\\
y' +y'' & = & 12+\beta' &\cdots\cdots\cdots (8)
\end{array}\right. \]
From the equations (1), (2), (5) and (6), we have $x=24-\alpha'$ and $y=24-\beta'$. Using these equations and (3), we have $\alpha'+\beta'=8$.
On the other hand, the equations (4),(7), (8) and $\alpha'+\beta'=8$ imply $x''+y''=8$. 
Thus, we have $\gamma=0$, a contradiction.
Therefore, $\mathcal{CH}_\text{np}\neq \Z\widetilde{E}_6$.

Next we suppose that $\mathcal{CH}_\text{np}=\Z\widetilde{E}_7$. Then, $\mathcal{CH}_\text{np}$ has the following subquiver with upper bounds $U_n$:
\begin{equation}\label{main3}
\begin{xy}
(0,-20) *[o]+{F_{-2}}="y1",(30,-20)*[o]+{F_{-1}}="y2",(60,-20) *[o]+{F_0}="y3",(90,-20) *[o]+{F_1}="y4",
(-15,-10) *[o]+{G_{-2}}="x1",(15,-10)*[o]+{G_{-1}}="x2",(45,-10)*[o]+{G_0}="x3",(75,-10)*[o]+{G_1}="x4",(105,-10)*[o]+{G_2}="x5",
(0,-10) *[o]+{W_{-1}'}="w1",(30,-10)*[o]+{W_0'}="w2",(60,-10)*[o]+{W_1'}="w3",(90,-10)*[o]+{W_2'}="w4",
(0,0) *[o]+{W_{-1}}="z1",(30,0)*[o]+{W_0}="z2",(60,0)*[o]+{W_1}="z3",(90,0)*[o]+{W_2}="z4"
,(15,10)*[o]+{V_0}="a2",(45,10)*[o]+{V_1}="a3",(75,10)*[o]+{V_2}="a4",
(30,20)*[o]+{U_1}="b2",(60,20)*[o]+{U_2}="b3",
(-30,0)*[o]+{\cdots}="dot1",(110,0)*[o]+{\cdots}="dot2",
\ar "x1";"y1"
\ar "x1";"w1"
\ar "x1";"z1"
\ar "y1";"x2"
\ar "w1";"x2"
\ar "z1";"x2"
\ar "x2";"y2"
\ar "x2";"w2"
\ar "x2";"z2"
\ar "y2";"x3"
\ar "w2";"x3"
\ar "z2";"x3"
\ar "x3";"y3"
\ar "x3";"w3"
\ar "x3";"z3"
\ar "y3";"x4"
\ar "w3";"x4"
\ar "z3";"x4"

\ar "z1";"a2"
\ar "a2";"z2"
\ar "z2";"a3"
\ar "a3";"z3"
\ar "z3";"a4"
\ar "a2";"b2"
\ar "b2";"a3"
\ar "a3";"b3"
\ar "b3";"a4"

\ar "x4";"y4"
\ar "y4";"x5"
\ar "x4";"w4"
\ar "w4";"x5"
\ar "x4";"z4"
\ar "z4";"x5"
\ar "a4";"z4"
\end{xy}
\end{equation}
By writing the ranks as $\mathcal{O}$-modules of vertices in $(\ref{main3})$, we obtain:
\begin{equation}\label{main4}
\begin{xy}
(0,-20) *[o]+{48}="y1",(30,-20)*[o]+{36}="y2",(60,-20) *[o]+{24}="y3",(90,-20) *[o]+{12}="y4",
(-15,-10) *[o]+{72}="x1",(15,-10)*[o]+{56}="x2",(45,-10)*[o]+{40}="x3",(75,-10)*[o]+{24}="x4",(105,-10)*[o]+{12}="x5",
(0,-10) *[o]+{y}="w1",(30,-10)*[o]+{y'}="w2",(60,-10)*[o]+{y''}="w3",(90,-10)*[o]+{y'''}="w4",
(0,0) *[o]+{x}="z1",(30,0)*[o]+{x'}="z2",(60,0)*[o]+{x''}="z3",(90,0)*[o]+{x'''}="z4"
,(15,10)*[o]+{\alpha}="a2",(45,10)*[o]+{\alpha'}="a3",(75,10)*[o]+{\alpha''}="a4",
(30,20)*[o]+{\gamma}="b2",(60,20)*[o]+{\gamma'}="b3",
(-30,0)*[o]+{\cdots}="dot1",(110,0)*[o]+{\cdots}="dot2",
\ar "x1";"y1"
\ar "x1";"w1"
\ar "x1";"z1"
\ar "y1";"x2"
\ar "w1";"x2"
\ar "z1";"x2"
\ar "x2";"y2"
\ar "x2";"w2"
\ar "x2";"z2"
\ar "y2";"x3"
\ar "w2";"x3"
\ar "z2";"x3"
\ar "x3";"y3"
\ar "x3";"w3"
\ar "x3";"z3"
\ar "y3";"x4"
\ar "w3";"x4"
\ar "z3";"x4"

\ar "z1";"a2"
\ar "a2";"z2"
\ar "z2";"a3"
\ar "a3";"z3"
\ar "z3";"a4"
\ar "a2";"b2"
\ar "b2";"a3"
\ar "a3";"b3"
\ar "b3";"a4"

\ar "x4";"y4"
\ar "y4";"x5"
\ar "x4";"w4"
\ar "w4";"x5"
\ar "x4";"z4"
\ar "z4";"x5"
\ar "a4";"z4"
\end{xy}
\end{equation}
where these unknown letters are the ranks of the corresponding vertices. Thus, we have the following system of linear equations by Proposition \ref{add}:
\[ \left\{\begin{array}{lclc}
x +y & = & 80 &\cdots\cdots\cdots (1)\\
x' +y' & = & 60 &\cdots\cdots\cdots (2)\\
x'' +y'' & = & 40 &\cdots\cdots\cdots (3) \\
x''' +y''' & = & 24&\cdots\cdots\cdots (4)\\
x +x' & = & 56+\alpha&\cdots\cdots\cdots (5)\\
x'+x'' &= & 40+\alpha'&\cdots\cdots\cdots (6) \\
x''+x''' &= & 24+\alpha''&\cdots\cdots\cdots (7) \end{array}\right. \quad
 \left\{\begin{array}{lclc}
y +y' & = & 56 &\cdots\cdots\cdots (8)\\
y' +y'' & = & 40 &\cdots\cdots\cdots (9)\\
y'' +y''' & = & 24 &\cdots\cdots\cdots (10) \\
x' +\gamma& = & \alpha+\alpha'&\cdots\cdots\cdots (11)\\
x''+\gamma' &= & \alpha'+\alpha''&\cdots\cdots\cdots (12) \\
\gamma+\gamma' &= & \alpha'&\cdots\cdots\cdots (13) \end{array}\right.\]
From the equations (1), (2), (5) and (8), we have $\alpha=28$. 
Similarly, the equations $(2),(3),(6)$ and $(9)$ yield $\alpha'=20$.
By adding both sides of the equations (11) and (12), we obtain the equation 
\[ x'+x''+\gamma +\gamma '=\alpha+2\alpha'+\alpha''. \]
From (6) and (13), the left hand side of the above equation is $40+2\alpha'$. 
Then, from (3), (4), (7), (10), we have
\[ 60=(x''+x''')+(y''+y''')=64, \]
a contradiction.

Finally, we assume that $\mathcal{CH}_\text{np}=\Z\widetilde{E}_8$. Then, $\mathcal{CH}_\text{np}$ has the following subquiver with upper bounds $V_n$ with $H(K_5)=5$:
\begin{equation}\label{main5}
\begin{xy}
(0,-15) *[o]+{W_1}="y1",(40,-15)*[o]+{W_2}="y2",(80,-15) *[o]+{W_3}="y3",
(-20,-5) *[o]+{V_2}="x1",(20,-5)*[o]+{V_3}="x2",(60,-5)*[o]+{V_4}="x3",(100,-5)*[o]+{V_5}="x4",
(0,-5) *[o]+{U_2'}="w1",(40,-5)*[o]+{U_3'}="w2",(80,-5)*[o]+{U_4'}="w3",
(0,5) *[o]+{U_3}="z1",(40,5)*[o]+{U_4}="z2",(80,5)*[o]+{U_5}="z3",
(20,15)*[o]+{K_4}="a2",(60,15)*[o]+{K_5}="a3",
(-30,0)*[o]+{\cdots}="dot1",(110,0)*[o]+{\cdots}="dot2",
\ar "x1";"y1"
\ar "x1";"w1"
\ar "x1";"z1"
\ar "y1";"x2"
\ar "w1";"x2"
\ar "z1";"x2"
\ar "x2";"y2"
\ar "x2";"w2"
\ar "x2";"z2"
\ar "y2";"x3"
\ar "w2";"x3"
\ar "z2";"x3"
\ar "x3";"y3"
\ar "x3";"w3"
\ar "x3";"z3"
\ar "y3";"x4"
\ar "w3";"x4"
\ar "z3";"x4"

\ar "z1";"a2"
\ar "a2";"z2"
\ar "z2";"a3"
\ar "a3";"z3"
\end{xy}
\end{equation}
By writing the ranks as $\mathcal{O}$-modules of vertices in $(\ref{main5})$, we obtain
\begin{equation}\label{main6}
\begin{xy}
(0,-15) *[o]+{32}="y1",(40,-15)*[o]+{32}="y2",(80,-15) *[o]+{40}="y3",
(-20,-5) *[o]+{48}="x1",(20,-5)*[o]+{44}="x2",(60,-5)*[o]+{48}="x3",(100,-5)*[o]+{60}="x4",
(0,-5) *[o]+{y}="w1",(40,-5)*[o]+{y'}="w2",(80,-5)*[o]+{y''}="w3",
(0,5) *[o]+{x}="z1",(40,5)*[o]+{x'}="z2",(80,5)*[o]+{x''}="z3",
(20,15)*[o]+{\alpha}="a2",(60,15)*[o]+{\beta}="a3",(-30,0)*[o]+{\cdots}="dot1",(110,0)*[o]+{\cdots}="dot2",
\ar "x1";"y1"
\ar "x1";"w1"
\ar "x1";"z1"
\ar "y1";"x2"
\ar "w1";"x2"
\ar "z1";"x2"
\ar "x2";"y2"
\ar "x2";"w2"
\ar "x2";"z2"
\ar "y2";"x3"
\ar "w2";"x3"
\ar "z2";"x3"
\ar "x3";"y3"
\ar "x3";"w3"
\ar "x3";"z3"
\ar "y3";"x4"
\ar "w3";"x4"
\ar "z3";"x4"

\ar "z1";"a2"
\ar "a2";"z2"
\ar "z2";"a3"
\ar "a3";"z3"
\end{xy}
\end{equation}
such that these unknown values satisfy the following system of linear equations:
\[ \left\{\begin{array}{lclc}
x +y & = & 60 &\cdots\cdots\cdots (1)\\
x' +y' & = & 60 &\cdots\cdots\cdots (2)\\
x'' +y'' & = & 68 &\cdots\cdots\cdots (3) \\
x +x' & = & 44+\alpha&\cdots\cdots\cdots (4)
\end{array}\right. \quad
\left\{\begin{array}{lclc}
x'+x'' &= & 48+\beta&\cdots\cdots\cdots (5)\\
y +y' & = & 44 &\cdots\cdots\cdots (6)\\
y' +y'' & = & 48 &\cdots\cdots\cdots (7)\\
\alpha +\beta& = &x' &\cdots\cdots\cdots (8) 
\end{array}\right. \]
From $(1),(2),(4)$ and $(6)$, we obtain
\[ 120=x+x'+y+y'=88+\alpha, \]
and hence, $\alpha=32$. Similarly, using equations $(2),(3),(5)$ and $(7)$, we have $\beta=32$. The equation $(8)$ implies that $x'=64$, which contradicts with the equation $(2)$. Thus, the above system of linear equations has no solutions, and we conclude that $\mathcal{CH}_\text{np}\neq \Z\widetilde{E}_8$. Therefore, we have $\mathcal{CH}_\text{np}=\Z A_{\infty}$. 
\end{proof}

\section{Remarks on the shape of stable AR components}

In this section, we describe the shape of a component of the stable Auslander--Reiten quiver for a symmetric $\mathcal{O}$-order $A$. By Lemma \ref{valuation_of_loops}, non-periodic stable Auslander--Reiten components of $A$ have no loops. Thus, we can apply the Riedtmann structure theorem (Theorem \ref{Ried}) to such stable components.
Our goal is to show Propositions \ref{periodic_case} and \ref{no loop lemma2}. In this section, the middle term of the almost split sequence ending at $X$ is denoted by $E_X$.

%%%%%%%%%%%%%%%%%%%%%%%%%%%%%%%%%%%%%%%%%%%%%%%%%%%%%%%%%%%%%%%%%%%%%%
\subsection{The case of periodic components}
%%%%%%%%%%%%%%%%%%%%%%%%%%%%%%%%%%%%%%%%%%%%%%%%%%%%%%%%%%%%%%%%%%%%%%%%%%%%

Let $A$ be a symmetric $\mathcal{O}$-order and $\mathcal{C}$ a periodic component of the stable Auslander--Reiten quiver of $A$. Assume that the  stable Auslander--Reiten quiver $\Gamma_s(A)$ has infinitely many vertices.
In this subsection, we discuss the shape of  $\mathcal{C}$. 

\begin{proposition}[{\cite[Theorem 1.27]{AKM}}]\label{periodic_case}
Let $A$ be a symmetric $\O$-order and $\mathcal{C}$ a periodic component of $\Gamma_s(A)$. Assume that $\Gamma_s(A)$ has infinitely many vertices. Then, one of the following statements holds.
\begin{enumerate}[(1)]
\item If $\mathcal{C}$ has loops, then $\mathcal{C}\setminus\{\text{loops}\}=\mathbb{Z}A_\infty/\langle \tau \rangle$. Moreover, the loop appears on the boundary of $\mathcal{C}$.
\item If $\mathcal{C}$ has no loops, then $\mathcal{C}$ is of the form $\mathbb{Z}T/G$, where $T$ is a directed tree whose underlying graph is one of infinite Dynkin diagrams and $G$ is an admissible group.
\end{enumerate}
\end{proposition}
\begin{proof}
For each vertex $X\in\mathcal{C}$, we may choose $n_X\geq 1$ such that $\tau^{n_X}(X)\simeq X$. Define a $\mathbb{Q}_{\geq 0}$ valued function $f$ on $\mathcal{C}$ by
\[ f(X)=\dfrac{1}{n_X}\sum_{i=0}^{n_X-1}\mathrm{rank}\ \tau ^i(X). \]
Then, we have $f(X)=f(\tau X)$ for any $X$. By the definition of the Auslander--Reiten quiver of $A$, $\widetilde{\mathcal{C}}:=\mathcal{C}\setminus\{\text{loops}\}$ is a valued stable translation quiver. By applying Theorem \ref{Ried} to $\widetilde{\mathcal{C}}$, there are a directed tree $T$ and an admissible group $G$ such that $\widetilde{\mathcal{C}}=\mathbb{Z}T/G$. For $X\in T$, is is easily seen that
\[ \sum_{Y\to X} d_{YX}\mathrm{rank}\ Y \leq \mathrm{rank}\ X + \mathrm{rank}\ \tau (X), \]
which implies that $f$ satisfies
\begin{equation}\label{aaaaa}
 2f(X) \geq \sum_{Y\to X \text{ in }T}d_{YX}f(Y)+\sum_{X\to Y\text{ in }T}d'_{XY}f(Y),
 \end{equation}
for any $X\in T$. 
Suppose that $\mathcal{C}$ has no loops. Then, Theorem \ref{tree class} implies the statement (2).

Now, suppose that $\mathcal{C}$ has loops. Then, the inequality of (\ref{aaaaa}) is strict for some $X$. Since $\mathcal{C}$ has infinitely many vertices \cite[Proposition 1.26]{AKM}, the underlying tree $\overline{T}$ is $A_{\infty}$ by Theorem \ref{tree class}.
Therefore, $\tilde{\mathcal{C}}=\mathbb{Z}A_\infty/\langle \tau\rangle$ from Lemma \ref{valuation_of_loops}.
We may assume without loss of generality that $T$ is a chain of irreducible morphisms
\[ X_1 \to X_2 \to \cdots \to X_r \to \cdots. \]
Assume that $X_r$ has a loop for some $r$.

From now on, we prove that loops appear on the boundary of $\mathcal{C}$, that is, $r=1$. To obtain a contradiction, suppose that $r>1$. Then the almost split sequence starting at $X_r$ is
\[ 0 \longrightarrow X_r \longrightarrow X_r^{\oplus l} \oplus X_{r+1}\oplus X_{r-1} \longrightarrow X_r \longrightarrow 0 \]
where $l \geq 1$.  Since the subadditive function $f$ satisfies $f(X_t)\geq 1$  for all $t\geq 1$, we have
\[ f(X_r) \geq  (2- l)f(X_r) \geq f(X_{r+1}) + f(X_{r-1}) \geq f(X_{r+1})+1. \] 
We show that $f(X_m) \geq  f(X_{m+1})+1$ for $m \geq r$. Suppose that $f(X_{m-1}) \geq
f(X_m)+1$ holds. The same argument as above shows $2f(X_m) \geq f(X_{m-1}) +
f(X_{m+1})$, and the induction hypothesis implies $f(X_{m-1}) + f(X_{m+1}) \geq
f(X_m) + f(X_{m+1})+1$. Hence $f(X_m) \geq f(X_{m+1})+1$.
Thus, there exists a positive integer $t$ such that $f(X_t)<0$, a contradiction. Hence, $r=1$.
\end{proof}

\begin{corollary}\label{cor1}
Let $A$ be a symmetric $\O$-order, and let $\mathcal{C}$ be a periodic component of $\Gamma_{s}(A)$ with infinitely many vertices. If there exists a vertex $X$ of $\mathcal{C}$ such that the number of non-projective direct summands of $E_X$ is one, then $\mathcal{C}$ has no loops.
\end{corollary}

\begin{corollary}
Let $A$ be a symmetric $\O$-order, and let $\mathcal{C}$ be a periodic component of $\Gamma_{s}(A)$ with infinitely many vertices. If there exists a vertex $X$ of $\mathcal{C}$ such that 
\begin{enumerate}[(i)]
\item The number of non-projective indecomposable direct summands of $E_X$ is $1$. We denote by $Y$ the unique non-projective direct summand.
\item The number of non-projective indecomposable direct summands of $E_Y$ is $2$.
\end{enumerate}
Then, $\mathcal{C}$ is a tube.
\end{corollary}
\begin{proof}
Since $\Gamma_{s}(A)$ has infinitely many vertices, so is $\mathcal{C}$ by \cite[Proposition 1.26]{AKM}. By the assumption (i) and Corollary \ref{cor1}, $\mathcal{C}$ has no loops. Thus, the tree class $\overline{T}$  of $\mathcal{C}$ is one of infinite Dynkin diagrams. By the assumption (i), $\overline{T}\neq A^{\infty}_{\infty}$. By the assumption (ii), $\overline{T}\neq B_{\infty}$, $C_{\infty}$, $D_{\infty}$. Therefore, $\overline{T}$ is $A_{\infty}$. \end{proof}

%%%%%%%%%%%%%%%%%%%%%%%%%%%%%%%%%%%%%%%%%%%%%%%%%%%%%%%%%%%%%%%%%%%%%%%%%%%%
\subsection{The case of non-periodic components}
%%%%%%%%%%%%%%%%%%%%%%%%%%%%%%%%%%%%%%%%%%%%%%%%%%%%%%%%%%%%%%%%%%%%%%%%%%%%

Let $\mathcal{C}$ be a component of $\Gamma_{s}(A)$. Recall the function $D:\mathcal{C}_0\to \mathbb{Z}_{\geq 0}$ defined by
\[ D(X):=\sharp\{ \text{non-projective indecomposable direct summands of $X\otimes\kappa\}$}. \] 

\begin{proposition}\label{no loop lemma2}
Let $A$ be a symmetric $\O$-order, and let $\mathcal{C}$ be a non-periodic component of the stable Auslander--Reiten quiver of $A$. 
Assume either 
\begin{enumerate}[(i)]
\item $\mathcal{C}$ does not contain Heller lattices or 
\item $A\otimes\kappa$ has finite representation type.
\end{enumerate}
Then, the tree class of $\mathcal{C}$ is one of infinite Dynkin diagrams or Euclidean diagrams.
\end{proposition}
\begin{proof}

Since $\mathcal{C}$ has no loops, there exist a directed tree $T$ and an admissible group $G$ such that $\mathcal{C}\simeq \mathbb{Z}T/G$ by Theorem \ref{Ried}. Suppose that  $\mathcal{C}$ does not contain Heller lattices. In this case, the function $D$ is additive with $D(X)=D(\tau X)$, for all $X\in\mathcal{C}$ by Lemmas \ref{stable of D} and \ref{additive of D}. Then, for all $X\in T$, we have
\[ 2D(X) = \sum_{Y\to X \text{ in }T}d_{YX}D(Y)+\sum_{X\to Y\text{ in }T}d'_{XY}D(Y). \]
Therefore, it follows from Theorem \ref{tree class} that $\overline{T}$ is one of infinite Dynkin diagrams or Euclidean diagrams.

Suppose that $A\otimes\kappa$ has finite representation type. Since the number of isoclasses of Heller lattices is finite, there exists an integer $n_X$ such that both $\tau ^{n_X}X$ and $\tau ^{n_X+1}X$ are not Heller lattices for any vertex $X\in\mathcal{C}$. Thus, $D$ is an additive function with $D=D\circ\tau$ on $\mathcal{C}$.
\end{proof}

%%%%%%%%%%%%%%%%%%%%%%%%%%%%%%%%%%%%%%%%%%%%%%%%%%%%%%%%%%

\bibliographystyle{amsalpha}

\begin{thebibliography}{[999]}

\bibitem[AKM]{AKM} S. Ariki, R. Kase and K. Miyamoto, On Components of stable Auslander -Reiten quivers that contains Heller lattices: the case of truncated polynomial rings, Nagoya math. J., \textbf{228} (2017),  72--113.

\bibitem[ASS]{ASS} I.~Assem, D.~Simson and A.~Skowro\'nski, Elements of the Representation Theory of Associative Algebras Volume 1, London Mathematical Society Student Texts \textbf{65}, 2006. 

\bibitem[A1]{A1}
M.~Auslander, Functors and morphisms determined by objects, Proc. Conf. on Representation Theory, Philadelphia, Lecture Notes in Pure and Applied Math. \textbf{37}, Marcel Dekker, 1978. 

\bibitem[A2]{A2}  M. Auslander, Rational singularities and almost split sequences, Trans. Amer. Math. Soc. \textbf{293} (1986), no. 2, 511--531. MR 816307.

\bibitem[AR]{AR1}
M.~Auslander and I.~Reiten, Almost split sequences for Cohen-Macaulay modules, Math. Ann. \textbf{277} (1987), 345--349.

\bibitem[ARS]{ARS}
M.~Auslander, I.~Reiten and S.~Smal\o, Representation Theory of Artin Algebras, Cambridge studies in advanced mathematics \textbf{36}, Cambridge University Press, 1995.

\bibitem[B]{B}
D.~Benson, Representations and Cohomology, I: Basic representation theory of finite groups and associative algebras, Cambridge studies in advanced mathematics \textbf{30}, 1991. 

\bibitem[BR]{BR}
M.~C.~R.~Butler and C.~M.~Ringel, Auslander--Reiten sequence with few middle terms and applications to string algebras, 
Comm. ~Algebra \textbf{15} (1987), no. 1--2, 145--179.

\bibitem[CR]{CR}
C.~W.~Curtis and I.~Reiner, Methods of Representation Theory. Vol. I. With Applications to Finite Groups and Orders. Pure and Applied Mathematics. John Wiley \& Sons Inc. (1981). 

\bibitem[Erd]{Erd}
K.~Erdmann, Blocks of tame representation type and related algebras, 
Springer-Verlag, Lecture Notes in Mathematics, 1990.

\bibitem[Erd2]{Erd2} K. Erdmann, On Auslander--Reiten components for group algebras, J. Pure and Applied Algebra \textbf{104} (1995), 149--160.

\bibitem[G]{Gab}
P. Gabriel, Indecomposable representations II, Symposia Math., \textbf{11} (1973), 81--104.

\bibitem[H]{H} D. Happel, Triangulated categories in the representation theory of finite-dimensional algebras, London Mathematical Society Lecture Note Series, 119. Cambridge University Press, Cambridge, 1988.

\bibitem[HPR]{HPR} D. Happel, U. Preiser and C. M. Ringel, Vinberg's characterization of Dynkin diagrams using subadditive function with application to $D\mathrm{Tr}$-periodic modules. Representation Theory II, Lecture Notes in Mathematics \textbf{832},  Springer-Verlag (1979), 280--294.

\bibitem[HL]{HL} F. Huard and S. Liu, Tilted Special Biserial Algebras, J. Algebra \textbf{217} (2) (1999),  679--700.

\bibitem[I]{I} O. Iyama, Representation theory of orders, Algebra-representation theory (Constanta, 2000), 63--96,
NATO Sci. Ser.II Math. Phys. Chem., \textbf{28}, Kluwer Acad. Publ., Dordrecht, 2001., \textbf{217} (1999), 679--700.

\bibitem[K1]{K}
S.~Kawata, On Heller lattices over ramified extended orders, J.~Pure and Applied Algebra \textbf{202} (2005), 55--71.

\bibitem[K2]{K2}
S.~Kawata, On Auslander--Reiten Components and Heller Lattices for Integral Group Rings, Algebra Represent. Theory \textbf{9} (2006), 513--524.

\bibitem[M]{Mi} T. Miyata, Note on direct summands of modules, J. Math. Kyoto Univ., \textbf{7}-1 (1967), 65--69.
    
\bibitem[Ri]{Ri}
C.~Riedtmann, Algebren, Darstellungsk\"ocher, \"Uberlagerungen und zur\"uck, Comm. Math. Helv. \textbf{55} (1980), 199--224.

\bibitem[Ri2]{Ri2} C. Riedtmann, Representation-finite selfinjective algebras of class $A_n$, Lecture Notes in Mathematics \textbf{832} (Springer, Berlin, 1980), 449--520.

\bibitem[Ro]{Ro}
K. W. Roggenkamp, Biserial algebras and graphs, Algebras and modules II (1996), CMS Conf. Proc. \textbf{24}, Amer. Math. Soc. providence, RI (1998), 481--496.

\bibitem[S]{S}
S. Schroll, Trivial extensions of gentle algebras and Brauer graph algebras, J. Algebra, \textbf{444} (2015), 183--200.

\bibitem[SS1]{SS1}
D.~Simson and A.~Skowro\'nski, Elements of the Representation Theory of Associative Algebras Volume 2, 
London Mathematical Society Student Texts \textbf{71}, 2007.

\bibitem[SS2]{SS2}
D.~Simson and A.~Skowro\'nski, Elements of the Representation Theory of Associative Algebras Volume 3, 
London Mathematical Society Student Texts \textbf{72}, 2007.

\bibitem[T]{T} G. Todorov, Almost split sequences for $DTr$-periodic modules, Lecture Notes in Mathematics \textbf{832} (Springer, Berlin, 1980), 600--631.

\bibitem[We]{We}
P. J. Webb, The Auslander-Reiten quiver of a finite group, Math. Z. \textbf{179} (1982), 97--121.

\bibitem[Wi]{W}
 A. Wiedemann, Orders with loops in their Auslander-Reiten graph, Communications in Algebra
\textbf{9(6)} (1981), 641--656.

\bibitem[WW]{WW}
 B. Wald and J. Waschb\"usch, Tame biserial algberas, J. Algebra, \textbf{95} (1985), 480--500.

\bibitem[Y]{Y} Y. Yoshino, Cohen-Macaulay Modules over Cohen-Macaulay Rings, London Mathematical Society
Lecture Notes Series \textbf{146}, Cambridge University Press, 1990.
 
\bibitem[Z]{Z} Y. Zhang, The structure of stable components, Canadian Journal of Mathematics vol. \textbf{43}(3) (1991), 652--672.

\end{thebibliography}

\end{document}